\documentclass[reqno]{amsart}%
\usepackage{palatino, mathpazo}
\usepackage{amsfonts}
\usepackage{amsmath}
\usepackage{amssymb,latexsym,xcolor}
\usepackage{graphicx}
\usepackage{harpoon}
\usepackage[mathscr]{eucal}
\usepackage{amssymb}%
\usepackage[%backref=page,
linktocpage=true,colorlinks,citecolor=magenta,linkcolor=blue,urlcolor=magenta]{hyperref}
\hypersetup{
 colorlinks=true,
 linkcolor=blue,
 citecolor=blue,
 urlcolor=blue,
 pdftitle={Uniqueness and nonuniqueness for mean field equations with multiple singularities on flat tori},
 pdfsubject={Uniqueness and nonuniqueness for mean field equations with multiple singularities on flat tori},
 pdfkeywords={Mean field equation, torus, uniqueness}
}

\providecommand{\U}[1]{\protect \rule{.1in}{.1in}}

\numberwithin{equation}{section}

\newcommand{\R}{\mathbb{R}}
\newcommand{\T}{\mathbb{T}}

\newcommand{\ud}{\mathrm d}

\newcommand{\Z}{\mathbb Z}
\newcommand{\C}{\mathbb C}
\renewcommand{\Cap}{\operatorname{Cap}}
\newcommand{\lam}{\lambda_1}
\theoremstyle{plain}
\newtheorem{theorem}{Theorem}[section]

\newtheorem{lemma}[theorem]{Lemma}
\newtheorem{corollary}[theorem]{Corollary}

\theoremstyle{remark}
\newtheorem{remark}[theorem]{Remark}

\usepackage{tikz}
\usetikzlibrary{arrows.meta,decorations.pathreplacing,calc}

\tikzset{
	annbox/.style={line width=0.45pt},
	annid/.style={-{Stealth[length=2mm]},line width=0.45pt},
	annnodal/.style={line width=1.25pt},
	annshade/.style={fill=black!8,draw=none},
	annsmall/.style={font=\scriptsize,align=center},
	anntitle/.style={font=\footnotesize\bfseries,align=center},
	annmark/.style={circle,fill=black,inner sep=1.6pt}
}

\newcommand{\AnnTorusFrame}{%
	\draw[annbox] (0,0) rectangle (4,2.8);
	\draw[annid] (1.45,-0.15)--(2.55,-0.15);
	\draw[annid] (1.45,2.95)--(2.55,2.95);
	\draw[annid] (-0.15,0.85)--(-0.15,1.95);
	\draw[annid] (4.15,0.85)--(4.15,1.95);
}

\newcommand{\AnnCaseVertex}{%
	\begin{tikzpicture}[scale=0.9,every node/.style={transform shape}]
		\AnnTorusFrame
		\fill[annshade] (0,0) rectangle (2,1.4);
		\draw[annnodal] (2,0)--(2,2.8);
		\draw[annnodal] (0,1.4)--(4,1.4);
		\node[annmark] at (2,1.4) {};
		\node[anntitle] at (2,3.35) {vertex};
		\node[annsmall] at (2.75,1.75) {degree $\ge4$};
		\node[annsmall] at (1,0.70) {a disk face\\is forced};
		\node[annsmall] at (2,-0.47) {opposite sides are identified};
	\end{tikzpicture}
}

\newcommand{\AnnCaseParallel}{%
	\begin{tikzpicture}[scale=0.9,every node/.style={transform shape}]
		\AnnTorusFrame
		\fill[black!7] (0,0) rectangle (0.75,2.8);
		\fill[black!7] (1.55,0) rectangle (2.35,2.8);
		\fill[black!7] (3.15,0) rectangle (4,2.8);
		
		\draw[annnodal] (0.75,0)--(0.75,2.8);
		\draw[annnodal] (1.55,0)--(1.55,2.8);
		\draw[annnodal] (2.35,0)--(2.35,2.8);
		\draw[annnodal] (3.15,0)--(3.15,2.8);
		
		\node[anntitle] at (2,3.35) {parallel essential loops};
		\node[annsmall] at (2.75,1.40) {annulus};
		\node[annsmall] at (0.37,1.40) {same};
		\node[annsmall] at (2,-0.47) {opposite sides are identified};
	\end{tikzpicture}
}

\begin{document}

	\title[Uniqueness and Non-uniqueness]{Uniqueness and nonuniqueness for mean field equations with multiple singularities on flat tori}

\author{Zhijie Chen}
\address{Department of Mathematical Sciences, Yau Mathematical Sciences Center,
Tsinghua University, Beijing, 100084, China }
\email{zjchen2016@tsinghua.edu.cn}
\author{Shihong Zhang}
\address{Yau Mathematical Sciences Center,
Tsinghua University, Beijing, 100084, China}
\email{shihong-zhang@tsinghua.edu.cn}

%\date{August 2026}
\subjclass[2020]{35J61, 34B30, 53C21}
\keywords{Mean field equation, torus, multiple singularities, uniqueness}

	\begin{abstract}
We study mean field equations with multiple positive singularities on flat tori. We establish a uniform uniqueness criterion in terms of the total singular mass and the scale-invariant spectral quantity $\lambda_1(\T)|\T|$, where $\lambda_1(\T)$ denotes the first positive eigenvalue of the Laplacian. Combining nodal-set analysis for Jacobi fields with a cylindrical Alexandrov--Bol type inequality and sharp small-capacity asymptotics, we obtain the optimal leading coefficient $8/\pi^2$ in the total-mass uniqueness bound as $\lambda_1(\T)|\T|\to0$. Sharpness is demonstrated by examples with two symmetric singularities on degenerating rectangular tori that admit at least three distinct solutions. For the case of two singularities with total mass $2\rho$, we further derive uniqueness and multiplicity results near $\rho=4\pi$ from the critical-point structure of the associated symmetrized Green function. In particular, when its only critical points are the four nondegenerate two-torsion points, the equation admits a unique solution for $\rho$ just below $4\pi$ and exactly three solutions for $\rho$ just above $4\pi$.
\end{abstract}
	
\maketitle

% This section requires amsmath, amssymb, and amsthm.
% The citation keys should be replaced by the keys used in your .bib file.

\section{Introduction}
\label{sec 1}

Mean field equations of Liouville type arise naturally in conformal geometry,
statistical mechanics, and gauge field theory. From the geometric point of
view, the exponential nonlinearity describes the conformal factor of a metric
with prescribed Gaussian curvature, and the resulting mean field equations on
compact surfaces have been extensively studied; see, for instance,
\cite{BGJM, BKLY,BartolucciLin, BT, CM, CL-1,CL-2,CL-3, DKM, DingJostLiWang99, LiYY, LT, MR, WeiZhang2026, WWX} and the references therein. In statistical mechanics, the mean field equation
arises in the Onsager theory of two-dimensional point vortices, where it
describes the equilibrium distribution of a large system of interacting
vortices; see \cite{CLMP}. Singular Liouville equations also appear naturally
in Abelian Chern--Simons--Higgs theory and related gauge field models, where
the prescribed singular points correspond to vortices; see, for example,
\cite{Tarantello08}. These geometric and physical connections have motivated
an extensive study of existence, blow-up, symmetry, non-degeneracy, and
uniqueness for Liouville-type equations on compact surfaces.

The natural smooth version on a flat torus $\mathbb{T}$ is the
normalized mean field equation
\begin{equation}\label{eq:regular-mean-field}
    \Delta u
    +\rho\left(
        \frac{e^u}
        {\displaystyle\int_{\mathbb{T}}e^u\,dA}
        -\frac{1}{|\mathbb{T}|}
    \right)=0,\qquad\mathrm{on}\qquad \mathbb{T}.
\end{equation}
Every constant function solves \eqref{eq:regular-mean-field}, and a
fundamental question is to determine when these are the only solutions.
Important uniqueness and one-dimensional symmetry results on rectangular
tori were obtained by Lin--Lucia \cite{LinLucia06,LinLucia07} and Gui--Moradifam \cite{GuiMoradifam19}. A uniform result for
arbitrary flat tori was subsequently established by Gu--Gui--Hu--Li \cite{GGHL}, who
proved that \eqref{eq:regular-mean-field} admits only constant solutions
whenever
\begin{equation}\label{eq:regular-uniqueness-threshold}
    \rho
    \leq
    \min\left\{
        8\pi,\,
        \lambda_1(\mathbb{T})|\mathbb{T}|
    \right\}.
\end{equation}
Here $\lambda_1(\mathbb T)$ is the first positive eigenvalue of $-\Delta$ on $\mathbb T$. Thus, even in the absence of singularity, the uniqueness is governed
jointly by the critical mass $8\pi$ from two-dimensional conformal analysis
and the spectral geometry of the underlying torus.

A substantially different phenomenon appears when one singularity is
introduced. After a translation, the equation can be written as
\begin{equation}\label{eq:one-source-intro}
    \Delta u+e^u
    =
    \rho\,\delta_0
    \qquad\text{on }
    \mathbb T=E_\tau
    :=
    \mathbb{C}/(\mathbb{Z}+\mathbb{Z}\tau),
\end{equation}
where $\tau\in\mathbb C$ satisfies $\mathrm{Im}\tau>0$.
At the quantized values
\[
    \rho=8\pi n,
    \qquad n\in\mathbb{N},
\]
the solvability of \eqref{eq:one-source-intro} may depend delicately on the
conformal modulus $\tau$. For $n=1$, Lin and Wang \cite{LW5} related even solutions of
\eqref{eq:one-source-intro} to nontrivial critical points of the Green
function and proved that the Green function of a flat torus has either three
or five critical points. For general $n$, the developing-map formulation
connects the singular Liouville equation with the classical Lam\'e equation,
hyperelliptic curves, and modular forms. This program was developed further
by Chai--Lin--Wang and their collaborators; see
\cite{ChaiLinWang15,ChenKuoLin19, LW5,LinWang17}.
These results demonstrate that the one singularity problem possesses a remarkable
algebraic structure which is special to the quantized singular equation. On the other hand, concerning the uniqueness, Lin and Wang \cite[Theorem 1.4]{LinWang17} proved that the solution of \eqref{eq:one-source-intro} is unique whenever 
\begin{equation}\label{eq:one-rho}
 \rho\in (0,8\pi).   
\end{equation}
Remark that the range \eqref{eq:one-rho} is universal for arbitrary torus, which is quite different from \eqref{eq:regular-uniqueness-threshold}.
A natural question is whether such kinds of universal uniqueness results \eqref{eq:one-rho} hold or not when the equation has multiple singularities. 

In the present paper, we study this uniqueness problem for multiple singularities on an
arbitrary flat torus, and give a negative answer to this universal problem. More precisely, let
$
    \Lambda\subset\mathbb{C}
$
be a lattice and write
$
    \mathbb{T}:=\mathbb{C}/\Lambda.
$
Let $p_1,\ldots,p_N\in\mathbb{T}$ be pairwise distinct, and let
$\beta_1,\ldots,\beta_N>0$, where $N\geq 2$. We consider distributional solutions of the mean field equation with multiple singular sources
\begin{equation}\label{eq:main}
    \Delta u+e^u
    =
    \sum_{j=1}^{N}\beta_j\delta_{p_j}
    \qquad\text{on }\;\mathbb{T},
    \qquad
    e^u\in L^1(\mathbb{T}).
\end{equation}
The regularity theory \cite{BM} shows that any solution \eqref{eq:main} satisfies $u\in C^2(\mathbb T\setminus\{p_j\}_{j=1}^N)$ and
$$u(z)=\frac{\beta_j}{2\pi}\log|z-p_j|+O(1),\quad\text{near }z=p_j.$$
Integrating \eqref{eq:main} over $\mathbb{T}$ gives
\begin{equation}\label{eq:def-rho}
   \int_{\mathbb{T}}e^u\,dA
    =
    \sum_{j=1}^{N}\beta_j:=N\rho.
\end{equation}
It is well known that by applying the standard variational approach and using the Moser-Trudinger inequality, equation \eqref{eq:main} always has solutions whenever $0<\rho<8\pi/N$.
However, in contrast to the case $N=1$ in \eqref{eq:one-source-intro}, we will prove in this paper that, one can not expect the existence of a universal constant $\rho_{N}$ independent of the choice of torus (say $4\pi$ or $2\pi$ for example) to guarantee the uniqueness for arbitrary torus when $\rho\in (0,\frac1N\rho_{N})$.

Before introducing our main theorem, we first discuss the quantity $\lambda_1(\T)|\T|$.
 Recall that, for a flat torus $\T=\mathbb{R}^2/\Lambda$, the dual lattice is
\begin{align}\label{Intro lambda}
    \Lambda^*:=\left\{\xi\in\mathbb{R}^2:\xi\cdot\ell\in2\pi\mathbb{Z}
\ \text{for every }\ell\in\Lambda\right\}.
\end{align}
Since $e^{i\xi\cdot x}$ is an eigenfunction of $-\Delta$ with eigenvalue $|\xi|^2$ for every $\xi\in\Lambda^*$, the first positive eigenvalue is
\begin{align}\label{Intro lambda_1}
\lambda_1(\T)=\min_{\xi\in\Lambda^*\setminus\{0\}}|\xi|^2.
\end{align}
The covolume of $\Lambda^*$ is $(2\pi)^2/|\T|$. By the two-dimensional Hermite inequality (cf. \cite{Cassels1997,ConwaySloane1999}),
\[
\min_{\xi\in\Lambda^*\setminus\{0\}}|\xi|^2
\leq \frac{2}{\sqrt{3}}\,\operatorname{covol}(\Lambda^*)
=\frac{8\pi^2}{\sqrt{3}\,|\T|}.
\]
Therefore
\[
0<\lambda_1(\T)|\T|\leq\frac{8\pi^2}{\sqrt{3}}.
\]
Moreover, $\lambda_1(\T)|\T|$ is invariant under homothetic rescaling and therefore depends only on the shape of the flat torus. It can also tend to $0$ as the torus degenerates; see Section \ref{Sec 5}.

Let $j_{0,1}=2.4048\cdots$ be the first positive zero of the Bessel function
$J_0$, and set \begin{equation}\label{kappa}\kappa:=j_{0,1}/2=1.2024\cdots.\end{equation}
Our first main result gives a uniform uniqueness criterion depending only on
the total singular mass and the spectral geometry of $\mathbb{T}$.

\begin{theorem}\label{thm:main}
Let $N\geq 2$. If $0<\rho \leq\frac{1}{N}\min\left\{4\pi,\frac{8\pi \kappa\lambda_1(\T)|\T|}{4\pi^2+\kappa\lambda_1(\T)|\T| } \right\}$,
	then equation \eqref{eq:main} has a unique solution.
\end{theorem}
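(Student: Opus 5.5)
Uniqueness will follow from existence, nondegeneracy of every solution, and a continuation argument in the masses. Replace $(\beta_j)$ by $(t\beta_j)$ for $t\in(0,1]$. For fixed $t$, solutions of \eqref{eq:main} form a compact set, since the total mass $tN\rho\le 4\pi<8\pi$ excludes blow-up by the Brezis--Merle/Bartolucci--Tarantello concentration-compactness theory. As $t\to0^+$, the solution is unique, because after subtracting the singular Green part the problem becomes a small perturbation of a constant problem with invertible linearization. Suppose we also know that for every $t$ and every solution $u$, the linearized operator $L\phi=\Delta\phi+e^u\phi$ has trivial kernel on $\T$. Then the implicit function theorem together with compactness shows the number of solutions is locally constant in $t$, so it equals $1$ at $t=1$. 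Equivalently, one can use the Leray--Schauder degree, which equals $1$ for subcritical mass, and note that each nondegenerate solution contributes $\pm1$ with the same sign. The whole proof therefore reduces to showing that no nontrivial Jacobi field $\phi$ with $\Delta\phi+e^u\phi=0$ exists under the stated hypothesis.

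Let $\phi\not\equiv0$ be such a Jacobi field. I will study its nodal set $\{\phi=0\}$, a graph on $\T$ whose vertices have even degree $\ge4$. The two pictures above suggest the dichotomy. The first case is that some nodal domain $\Omega$ is contained in a topological disk, which happens whenever there is a vertex or a contractible nodal loop. Then $\phi|_\Omega$ is a first Dirichlet eigenfunction of $-\Delta$ with weight $e^u$ at eigenvalue $1$. This contradicts the singular Alexandrov--Bol inequality on simply connected domains, with positive singularities only helping: the metric $e^u|dz|^2$ has curvature $\le 1/2$ in the normalization $K=\tfrac12$, and Bol gives $\lambda_1(\Omega,e^u)>1$ once $\int_\Omega e^u<8\pi$. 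Since $\int_\Omega e^u\le N\rho\le4\pi$, this case is excluded. The second case is that all nodal domains are annuli bounded by parallel essential loops, so there are at least two of them. Then some annular nodal domain $A$ satisfies $\int_A e^u\le N\rho/2\le 2\pi$.

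For the annular case I will use a cylindrical Alexandrov--Bol inequality. On an annulus $A$ with $\Delta u+e^u\ge0$ and mass $m=\int_A e^u$, the first Dirichlet eigenvalue of $-\Delta$ with weight $e^u$ is bounded below by a quantity depending on $m$ and on the conformal capacity $\Cap(A)$. I will obtain this by symmetrizing onto a flat cylinder and comparing with a Bessel-type radial model, which is where $j_{0,1}$ and hence $\kappa$ enter. The bound should take the form
\[
\lambda_1(A,e^u)\ \ge\ \frac{4\pi\kappa\cdot\text{(cap term)}}{m\,(\dots)},
\]
roughly interpolating between the disk case ($8\pi/m$) and $\kappa\,\Cap$-controlled behaviour. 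Next I will bound $\Cap(A)$ for an essential annulus in $\T$ from above in terms of $\lambda_1(\T)|\T|$. A homotopically nontrivial annulus has modulus at most that of the maximal cylinder in the class, namely $|\T|/\ell^2$ with $\ell$ the shortest lattice vector in the loop direction, and $\ell^{-2}$ is comparable to $\lambda_1(\T)/(4\pi^2)$ through the dual lattice (the short dual vector is orthogonal to the long direction). Inserting these bounds and requiring $\lambda_1(A,e^u)>1$ should reproduce exactly the threshold $N\rho\le \frac{8\pi\kappa\lambda_1|\T|}{4\pi^2+\kappa\lambda_1|\T|}$. The $4\pi^2$ comes from $2\pi$ periodicity, and the factor $8\pi$ from Bol in the small-mass limit.

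The main obstacle is this cylindrical Bol estimate with the sharp constant. It requires a level-set/isoperimetric argument adapted to annuli, in which the sets $\{\phi>s\}$ may wrap around and the relevant isoperimetric profile is that of a cylinder rather than of the plane. It must also be checked carefully that the optimal Bessel model yields $\kappa=j_{0,1}/2$ and combines with the capacity bound in exactly the displayed rational form. I would first try the rearrangement onto the model cylinder $[0,h]\times\R/\ell\Z$ with metric $e^{w}$ solving the 1D Liouville equation, and verify the equality case.
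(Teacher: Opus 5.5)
\textbf{The annular step cannot reach the stated constant.} Your continuation framework and the exclusion of disk nodal domains agree with the paper. The problem is that you keep only one annulus $A$ with $\int_A e^u\le N\rho/2$, together with the individual bound $\Cap(A)\ge |\gamma_v|^2/|\T|\ge \lambda_1(\T)|\T|/(4\pi^2)$. Here $\gamma_v$ is the primitive lattice vector in the loop direction, your $\ell=|\gamma_v|$. Two side corrections: this is a \emph{lower} bound on capacity, which is the direction you need, not an upper bound; and the correct relation is $\lambda_1(\T)\le 4\pi^2|\gamma_v|^2/|\T|^2$, not $|\gamma_v|^{-2}\approx\lambda_1(\T)/4\pi^2$.

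The single-annulus reduction loses a factor that no cylindrical inequality can recover. Take the source-free pair $e^U=2a^2\operatorname{sech}^2x$, $\psi=1-x\tanh x$, with $x=a(t-\ell/2)$, $a=2T/\ell$ and $T\tanh T=1$. It is admissible on $C_\ell$ and has $\int_{C_\ell}e^U=8\Cap(C_\ell)$ exactly. So any valid cylinder bound is at most $8c$. Your route then gives at best $N\rho>4L/\pi^2\approx0.41L$, with $L:=\lambda_1(\T)|\T|$. The theorem, however, claims uniqueness up to $\frac{8\pi\kappa L}{4\pi^2+\kappa L}\sim\frac{2\kappa}{\pi}L\approx0.77L$ as $L\to0$. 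Even with the paper's bound $\int_{C_\ell}e^U>\frac{4\pi\kappa c}{2+\kappa c}$, your route only reaches $\frac{8\pi\kappa L}{8\pi^2+\kappa L}$.

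The missing idea is that the moduli of all $m\ge2$ disjoint parallel annular nodal domains add up: $\sum_j\Cap(A_j)^{-1}\le 4\pi^2/L$. The paper proves this as follows. It glues the capacitary potentials, weighted by $a_j\propto\Cap(A_j)^{-1}$, into a function on $\R^2$ that is $\gamma_v$-periodic and increases by $1$ under $\gamma_h$. Its energy per cell equals $(\sum_j c_j^{-1})^{-1}$ and is at least $|\gamma_v|^2/|\T|$. The test function $\cos 2\pi t$ then gives $|\gamma_v|^2/|\T|\ge L/(4\pi^2)$. Set $M_j:=\int_{A_j}e^u$ and sum $M_j>\frac{4\pi}{2/(\kappa c_j)+1}$ over \emph{all} annuli. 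Convexity in $1/c_j$ gives $N\rho>\frac{4\pi m^2\kappa L}{8\pi^2+m\kappa L}$, which increases in $m$ and equals the threshold at $m=2$.

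\textbf{The cylindrical estimate is only a plan, and your proposed mechanism does not produce $\kappa$.} The paper does not use cylindrical isoperimetry or rearrangement. Its steps are:
\begin{enumerate}
\item Lift $(U,\psi)$ to the $n$-fold cyclic cover.
\item Split $\psi_n$ using $\chi_{1,n}=|\cos\frac{\theta}{2n}|$ and $\chi_{2,n}=|\sin\frac{\theta}{2n}|$ into functions vanishing on a cut, hence lying in $H^1_0$ of a simply connected domain.
\item Apply the simply connected bound $\int|\nabla v|^2\ge(\frac{4\pi\kappa^2}{\sigma}+1-\kappa^2)\int e^Uv^2$ with $\sigma=nM\le4\pi$. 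This is where $j_{0,1}$ enters, through $\int_0^1 sG'^2\ge\kappa^2\int_0^1G^2$.
\item Absorb the localization error $\frac1{4n^2}\int\psi_n^2$ with the Poincar\'e inequality $\int\psi_n^2\le\frac{\ell^2}{\pi^2}\int|\nabla\psi_n|^2$, and optimize in $n$.
\end{enumerate}
A one-dimensional Liouville model on a cylinder leads to $T\tanh T=1$, not to $j_{0,1}$. So a rearrangement onto such a model has no reason to ``reproduce exactly'' the $\kappa$-form.

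\textbf{Smaller points.}
\begin{itemize}
\item The simply connected Bol/Faber--Krahn threshold is $4\pi$, not $8\pi$: a hemisphere of the curvature-$\frac12$ sphere has $\lambda_1=1$, and larger caps have $\lambda_1<1$. Your first case survives only because $\int_\Omega e^u<N\rho\le4\pi$.
\item The claim that a vertex forces a disk face needs the Euler characteristic count: $\chi(Z)\le-V<0$, while non-disk faces have $\chi\le0$.
\item The degree alternative needs all solutions to have the same Morse-index parity, which you do not justify. Continuation alone suffices, so you can drop it.
\end{itemize}
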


No symmetry or special arithmetic condition is imposed on the singular
points $p_j$. The proof combines an Alexandrov--Bol type inequality on
cylinders with a nodal set reduction adapted to the topology of the torus.
A central role is played by the Jacobi equation
\begin{equation}\label{eq:jacobi-intro}
\Delta\phi+e^u\phi=0
\qquad\text{on }\mathbb{T}.
\end{equation}
For the Jacobi fields considered in this paper, testing
\eqref{eq:jacobi-intro} against the constant function $1$ gives
\begin{equation}\label{eq:jacobi-orthogonality-intro-intro-intro}
\int_{\mathbb{T}}e^u\phi dA=0.
\end{equation}
Thus, a nontrivial Jacobi field must change sign. When $\rho \leq \frac{4\pi}{N}$, the topology of its nodal set allows the torus to be decomposed into suitable cylindrical domains. Here, we emphasize that our approach is essentially different from the traditional nodal set analysis in \cite{GGHL,GuiMoradifam,GuiMoradifam19,LinLucia06,LinLucia07,LW5}. These classical approaches rely on two key points: first, for any simply connected nodal domain $\Omega$, the Alexandrov--Bol type inequality yields $\int_{\Omega}e^u\geq 4\pi$; second, one obtains a lower bound on the number of simply connected nodal domains and then derives a contradiction. In contrast, on a flat torus, we prove that the nodal domains of the Jacobi field are topological cylinders, which are not simply connected. To overcome this difficulty, we extend the Alexandrov--Bol type inequality from simply connected domains to cylindrical domains.

Our next result improves the range of $\rho$ in Theorem \ref{thm:main} to an almost sharp range when $\lam(\T)|\T|$ is small. The almost sharpness will be explained in Remark \ref{rmk}.

\begin{theorem}
\label{thm:intro-asymptotic-uniqueness}
For every $\varepsilon\in(0,1)$, there exists
$\delta_\varepsilon>0$ such that, for every flat torus
$\T$ and every $N\geq2$, equation~\eqref{eq:main} has a unique
solution whenever
\[
 0<\lam(\T)|\T|\leq\delta_\varepsilon,
 \qquad
 0<\rho\leq
 \frac{8(1-\varepsilon)}{N\pi^2}\lam(\T)|\T|.
\]
The constant $\delta_\varepsilon$ is independent of $N$,
the  distinct singular points $p_1,\ldots,p_N$,
and the constants $\beta_1,\ldots,\beta_N$.
\end{theorem}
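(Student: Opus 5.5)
Since the uniqueness in Theorem \ref{thm:intro-asymptotic-uniqueness} is a statement for all $\rho$ in a range, I will argue by continuation in $\rho$ together with nondegeneracy, following the scheme behind Theorem \ref{thm:main}. For fixed $p_j$ and fixed ratios $\beta_j/\sum_k\beta_k$, consider the family in $t\in(0,\rho]$. For small total mass the solution is unique, by a perturbative/variational argument: it is a strict minimizer of the coercive functional. The solution set is compact for $N\rho<8\pi$; since $8\lambda_1|\T|/\pi^2$ is small, this holds here. Hence, by the implicit function theorem and a degree/continuation argument, uniqueness on the whole range follows once every solution is \emph{nondegenerate}. Concretely, I must show that any Jacobi field $\phi$ solving \eqref{eq:jacobi-intro} with $\int e^u\phi=0$ vanishes.

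Step 1 (nodal structure). Suppose $\phi\not\equiv0$. Since $N\rho\le 4\pi$, the total mass is at most $4\pi$. The Alexandrov--Bol inequality then excludes a simply connected nodal domain, because such a domain carries mass $\ge4\pi$ and the complementary domains need positive mass. I would reuse the nodal-set reduction from the proof of Theorem \ref{thm:main}, which gives the following picture: the nodal set consists of disjoint parallel essential simple closed curves, so the nodal domains are annuli (topological cylinders) in a common homotopy class. There are at least two of them, since $\phi$ changes sign.

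Step 2 (cylindrical Bol inequality with capacity). On a cylindrical nodal domain $\Omega$ with $\phi=0$ on $\partial\Omega$, the first Dirichlet eigenvalue of $-\Delta-e^u$ is $0$. I would apply the cylindrical Alexandrov--Bol inequality established earlier in the paper: the mass $m=\int_\Omega e^u$ must exceed a threshold depending on the conformal modulus of $\Omega$, i.e.\ on $\Cap$ of the cylinder. In the small-capacity regime, the sharp asymptotic should read $m\ge (1-o(1))\cdot 8\,\Cap(\Omega)$-type. To calibrate this, model a long thin cylinder of circumference $\ell$ and length $L$: the radial Jacobi problem on $[0,L]$ with weight $e^u$ yields the 1D Sturm bound $m\gtrsim \frac{\pi^2}{\cdots}$. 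The $8/\pi^2$ arises from optimizing the 1D Liouville profile $\cosh^{-2}$ on an interval.

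Step 3 (summing against geometry). The annular nodal domains are parallel and essential in one homotopy class $\gamma$. Their moduli then sum to at most the modulus of $\T$ cut along $\gamma$. That in turn is controlled by $\lambda_1|\T|$: a short closed geodesic class has extremal length comparable to $\lambda_1|\T|$, with the dual-lattice vector determining both. I would bound the total mass from below by summing the per-domain lower bounds over at least two domains, using convexity of the threshold in the modulus. This contradicts $\sum\beta_j=N\rho\le\frac{8(1-\varepsilon)}{\pi^2}\lambda_1|\T|$ once $\lambda_1|\T|\le\delta_\varepsilon$. All estimates involve only the total mass, so $\delta_\varepsilon$ is independent of $N$, $p_j$, $\beta_j$.

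The main obstacle is Step 2/3: the sharp small-capacity asymptotics of the cylindrical Bol threshold, with error $o(1)$ uniform in the geometry. The difficulty is that the singularities may sit in the domains and break the rotational symmetry. My first attempt would be symmetrization along the essential direction (Steiner/cylindrical rearrangement preserving mass and decreasing Dirichlet energy). This reduces the problem to the 1D ODE $\psi''+V\psi=0$ with $\int V$ given, and then a sharp Lyapunov-type inequality on the interval gives the constant. The $(1-\varepsilon)$ loss absorbs the error between the true modulus and $|\T|\lambda_1/(4\pi^2)$-type quantities.
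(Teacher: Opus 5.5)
Your architecture matches the paper's. It combines continuation with nondegeneracy, the nodal reduction of Lemma~\ref{lem:annular-alt} to $m\ge2$ parallel essential annuli, a per-annulus bound $M_j\ge(8-o(1))c_j$, and the summation $\sum_j c_j\ge m^2\big/\sum_j c_j^{-1}\ge m^2\lambda_1(\T)|\T|/(4\pi^2)$. Your extremal-length justification of $\sum_j c_j^{-1}\le|\T|/|\gamma|^2\le4\pi^2/(\lambda_1(\T)|\T|)$ (Gr\"otzsch superadditivity plus the dual-lattice vector orthogonal to $\gamma$) is a valid substitute for the test-function argument of Lemma~\ref{lem:capacity-sum-analytic}. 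You must, however, use the exact inequality $|\gamma|^2/|\T|\ge\lambda_1(\T)|\T|/(4\pi^2)$, which holds for every essential class (the nodal class need not be the shortest); mere ``comparability'' loses the constant.

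The genuine gap is the step you flag yourself, $B(c)\ge(8-o(1))c$, and the route you propose cannot produce the $8$. If the reduction leaves you with $\psi''+V\psi=0$ on $(0,\ell)$ and control of $\int V$ only, the sharp conclusion is Lyapunov's $\ell\int_0^\ell V>4$, approached by $V$ concentrating at the midpoint. That gives only $M_j>4c_j$, hence at best nondegeneracy for $N\rho\le\frac{4}{\pi^2}\lambda_1(\T)|\T|$, roughly half the claimed range. The constant $8$ of Lemma~\ref{lem:one-dimensional-mass} holds only for $V=e^v$ with $v''+e^v\ge0$. Even then, one must show that the nonnegative measure coming from the singularities cannot lower the bound, which is Step~2 of that lemma.

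Rearrangement in $\theta$ does not preserve this structure:
\begin{itemize}
\item Steiner symmetrization does not make $\psi$ or $e^U$ independent of $\theta$.
\item Rearranging onto $\theta$-independent bands would require each superlevel set of $\psi$ to have weighted perimeter at least that of a band of the same mass. This fails for the small disk-like superlevel sets near $\max\psi$.
\item For the circular average of $W=e^U$ one only has $\langle W\rangle''=4\langle|\nabla\sqrt W|^2\rangle-\langle W^2\rangle$. Here $\langle W^2\rangle\ge\langle W\rangle^2$ goes the wrong way for $(\log\langle W\rangle)''+\langle W\rangle\ge0$.
\end{itemize}

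The missing idea is a mechanism that forces $e^U$ itself, not just $\psi$, to become $\theta$-independent as $c\to0$. The paper obtains it by contradiction and compactness, rescaling $t=\ell s$ and setting $\nabla_k=(\partial_s,\ell\partial_\theta)$:
\begin{itemize}
\item The stability inequality $\int W f^2\le\int|\nabla_kf|^2$, tested with $f=\eta e^{\widetilde U/2}$ and combined with the Liouville equation, yields $\int\eta^2\bigl(W^2+\tfrac12W|\nabla_k\widetilde U|^2\bigr)\le2\int W|\eta'|^2$.
\item This bounds $\sqrt W$ in the anisotropic $H^1_{\mathrm{loc}}$ norm, so $W\to m(s)$ strongly in $L^2_{\mathrm{loc}}$.
\item A weighted interpolation inequality bounds the Jacobi energy, so $\widetilde\psi\to p(s)\not\equiv0$.
\item Lower semicontinuity gives $m''\ge4|(\sqrt m)'|^2-m^2$ with $p''+mp=0$, to which Lemma~\ref{lem:one-dimensional-mass} applies.
\end{itemize}

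Separately, your Step~3 must exclude annuli whose capacity is not small. The bound $\sum_jc_j^{-1}\le4\pi^2/(\lambda_1(\T)|\T|)$ does not force every $c_j$ to be small, and the asymptotic bound says nothing there. The paper handles this with the non-sharp but monotone bound $M_j>4\pi\kappa c_j/(2+\kappa c_j)$ of Theorem~\ref{thm:local}. Once $\delta_\varepsilon$ is small, a single annulus with $c_j\ge c_\varepsilon$ already carries more mass than $N\rho$ permits.
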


To illustrate both the geometric necessity and the limitations of the
uniform bound in Theorem~\ref{thm:main}, we next consider the equation with
two symmetric singular points on a normalized flat torus. Let $\tau \in \mathbb{H}=\left \{  \tau\in\mathbb C|\operatorname{Im}\tau>0\right \}$, $\Lambda_{\tau}=\mathbb{Z}+\mathbb{Z}\tau$, and denote
$$\omega_{0}=0,\quad\omega_{1}=1,\quad\omega_{2}=\tau,\quad\omega_{3}=1+\tau.$$Let $\T= E_{\tau}:=\mathbb{C}/\Lambda_{\tau}$ be a normalized flat torus and $E_{\tau}[2]:=\{ \frac{\omega_{k}}{2}|k=0,1,2,3\}+\Lambda
_{\tau}$ be the set consisting of the lattice points and half periods
in $E_{\tau}$.  
We study the equation
\begin{equation}\label{eq:two-source}
    \Delta u+e^u
    =
    \rho\bigl(\delta_p+\delta_{-p}\bigr)
    \qquad\text{on }E_\tau.
\end{equation}
Here the total singular mass is $2\rho$.

Our first result for \eqref{eq:two-source} shows that non-uniqueness may
already occur at a mass scale determined by
$\lambda_1(E_\tau)|E_\tau|$.

\begin{theorem}
\label{thm:intro-nonuniqueness}
Let $p=\tau/4\in E_\tau$ and
$\sigma_\tau(z)=z+\tau/2$. The following statements hold
for equation~\eqref{eq:two-source}.
\begin{enumerate}
 \item[(i)]
 If $\mathrm{Im}\,\tau>\pi/2$ and
 $
  \frac12\lam(E_\tau)|E_\tau|<\rho<4\pi,
 $
 then the equation admits at least two distinct solutions
 that are not $\sigma_\tau$-invariant and exactly one unique
 $\sigma_\tau$-invariant solution.

 \item[(ii)]
 For every $\varepsilon>0$, there exists $b_\varepsilon\geq1$
 such that, whenever $\tau=\mathrm{i}b$ with
 $b\geq b_\varepsilon$ and
 \[
  \rho=\frac{16(1+\varepsilon)}{b}
  =\frac{4(1+\varepsilon)}{\pi^2}
   \lam(E_\tau)|E_\tau|,
 \]
 the equation admits at least  two distinct solutions
 that are not $\sigma_\tau$-invariant and exactly one unique
 $\sigma_\tau$-invariant solution.
\end{enumerate}
\end{theorem}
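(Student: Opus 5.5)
The plan is to use the translation symmetry $\sigma_\tau$ to split the problem into a symmetric part and an antisymmetric part. The symmetric part is handled by the one-singularity uniqueness theorem, and the antisymmetric part by minimization together with a second-variation instability argument.

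Since $-p=-\tau/4\equiv \tau/4+\tau/2-\tau$, the map $\sigma_\tau$ swaps $p$ and $-p$. Hence $u\mapsto u\circ\sigma_\tau$ maps solutions of \eqref{eq:two-source} to solutions.

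\emph{Step 1 (the invariant solution).} A $\sigma_\tau$-invariant solution descends to the half torus $E'=\mathbb C/(\mathbb Z+\mathbb Z\tau/2)$. There it solves $\Delta v+e^v=\rho\,\delta_{p}$, whose single source has mass $\rho<4\pi<8\pi$. Conversely, every solution on $E'$ lifts to $E_\tau$. After rescaling $E'$ to a normalized torus, which changes $e^v$ only by an additive constant in $v$, Lin--Wang's theorem \cite[Theorem 1.4]{LinWang17} gives existence and uniqueness. So there is exactly one $\sigma_\tau$-invariant solution $u_0$ in both (i) and (ii).

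\emph{Step 2 (a minimizer distinct from $u_0$).} Solutions of \eqref{eq:two-source} are critical points of the functional
\[
J(w)=\tfrac12\int|\nabla w|^2-2\rho\log\int e^{w+h}\,dA,
\]
where $h$ is the Green-function part carrying the sources. Since $2\rho<8\pi$, the Moser--Trudinger inequality makes $J$ coercive, so $J$ attains a global minimum at some solution $u_1$. We will show that the second variation of $J$ at $u_0$ is negative in some direction $\psi$ with $\psi\circ\sigma_\tau=-\psi$ and $\int e^{u_0}\psi=0$. Then $u_0$ is not a minimizer, so $u_1\neq u_0$. By Step 1, $u_1$ is not $\sigma_\tau$-invariant, and $u_1\circ\sigma_\tau\neq u_1$ is a second solution of the same kind. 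This yields at least two non-invariant solutions.

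Because $e^{u_0}$ is $\sigma_\tau$-invariant and $\psi$ is anti-invariant, $\int e^{u_0}\psi=0$ holds automatically. The second variation therefore reduces to
\[
Q(\psi)=\int|\nabla\psi|^2-\int e^{u_0}\psi^2 .
\]

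\emph{Step 3, case (i).} Write $b=\mathrm{Im}\,\tau$ and take $\psi_c=\cos(2\pi y/b)$ and $\psi_s=\sin(2\pi y/b)$ in the coordinate $y=\mathrm{Im}\,z$ adapted to the period $\tau$. Both are well defined on $E_\tau$ and change sign under $y\mapsto y+b/2$. Summing,
\[
Q(\psi_c)+Q(\psi_s)=(2\pi/b)^2|E_\tau|-\int e^{u_0}=(2\pi/b)^2|E_\tau|-2\rho .
\]
The hypothesis $\mathrm{Im}\,\tau>\pi/2$ is exactly what is needed to check, from \eqref{Intro lambda_1}, that $\lambda_1(E_\tau)=(2\pi/b)^2$. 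The assumption $\rho>\tfrac12\lambda_1|E_\tau|$ then makes one of the two forms negative.

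\emph{Step 4, case (ii), the main obstacle.} Here $\rho=16(1+\varepsilon)/b$, which lies below the case (i) threshold $\tfrac12\lambda_1(E_\tau)|E_\tau|=2\pi^2/b$. So the constant-density test functions fail, and we must exploit the non-uniform profile of $e^{u_0}$ on the long cylinder.

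\begin{enumerate}
\item Rescale $y=bt$ and set $W=u_0+2\log b$. As $b\to\infty$ with $\rho b\to 16(1+\varepsilon)$, show by elliptic estimates on the thin cylinder that $W$ becomes asymptotically independent of $x$ after averaging over the unit circle in $x$.
\item Show that the limit $W$ converges to the even solution of the one-dimensional Liouville problem on the circle of length $1$,
\[
W''+e^W=\mu\bigl(\delta_{1/4}+\delta_{3/4}\bigr),\qquad \mu=\lim \rho b .
\]
This solution is explicit, of $\log(\text{sech}^2)$ type on each arc and glued with jump $\mu$ in $W'$ at the sources.
\item Study the linearized operator $-\phi''-e^W\phi$ on functions that are odd under $t\mapsto t+1/2$. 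Using the explicit kernel elements of the linearized Liouville ODE, compute that it has a negative eigenvalue precisely when $\mu>16$.
\item Lift the corresponding eigenfunction $\phi(t)$ to $\psi(z)=\phi(\mathrm{Im}\,z/b)$. The convergence in item 1 then gives $Q(\psi)<0$ for $b\ge b_\varepsilon$, and Step 2 applies.
\end{enumerate}

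The delicate points are the uniform convergence in item 1 near the singularities and the explicit spectral computation producing the sharp constant $16$.
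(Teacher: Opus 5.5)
Your argument is sound, and Steps 1 and 3 are the paper's. Both use the descent of $\sigma_\tau$-invariant solutions to the one-source problem on $\mathbb C/(\mathbb Z+\mathbb Z\tau/2)$ together with Lin--Wang uniqueness, and the same test functions $\cos(2\pi\,\mathrm{Im}\,z/b)$, $\sin(2\pi\,\mathrm{Im}\,z/b)$.

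\textbf{Where the routes differ.} You prove that the specific invariant solution $u_0$ is unstable in an anti-invariant direction, and then use uniqueness of invariant solutions to conclude that a global minimizer is not invariant. The paper instead argues by contradiction against an arbitrary $\sigma_\tau$-invariant global minimizer, so in (i) its non-invariant pair does not depend on Lin--Wang at all.

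\textbf{Why this matters in (ii).} There the paper's contradiction hypothesis is what supplies compactness. Write $\kappa=\rho b=16(1+\varepsilon)$, as the paper does in that proof. From $J_b(v_b)\le J_b(0)\le 0$ and the Moser--Trudinger inequality on the half torus $Q_{1/2}$ (constant $1/(32\pi)$ on $Q$ for half-periodic functions), the paper gets $E_b(v_b)\le C$, provided $\kappa<8\pi$. This is why it first sends $\kappa>2\pi^2$ back to part (i). Your item 1 is exactly where this work must be done, and ``elliptic estimates on the thin cylinder'' is not yet an argument. You can close it in either of two ways.
\begin{enumerate}
\item[(a)] Observe that $J$ restricted to $\sigma_\tau$-invariant functions is, up to a constant, twice the one-source functional on the half torus. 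That functional is coercive for $\rho<8\pi$ and has a unique critical point, so $u_0$ is the minimizer among invariant functions, and the paper's energy bound applies to it verbatim.
\item[(b)] Argue directly from the fact that the two-dimensional mass $2\rho=2\kappa/b$ tends to $0$. The zero-$x$-mean Green function of $b^2\partial_x^2+\partial_s^2$ is of order $b^{-1}\log$ (compare \eqref{eq:Sb-Fourier}). By Jensen, the $x$-oscillating part of $v$ is then exponentially integrable with any exponent and tends to $0$ in measure. The $x$-average solves a one-dimensional equation with mass $2\kappa$, so it is uniformly Lipschitz. This route works for every solution and every $\kappa$, with no minimality and no split at $2\pi^2$.
\end{enumerate}
Also, uniform convergence near the sources is neither true ($u_0=-\infty$ at $\pm p$) nor needed; only $e^{W}\to e^{W_\infty}$ in $L^1$ is used.

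\textbf{Items 2 and 3.} In item 2, the limit is singled out by half-periodicity, inherited from $\sigma_\tau$-invariance, not by evenness. For every $\mu>16$, the problem on the circle of length $1$ also has even solutions with unequal masses on the two arcs. In item 3, the negative direction is the paper's $\psi=\kappa/16+\frac{s}{2}U_0'(s)$ on $I=(-1/4,1/4)$, a constant shift of the scaling Jacobi field. It vanishes at $\pm1/4$ because $U_0'(\pm1/4)=\mp\kappa/2$. It satisfies $-\psi''-e^{U_0}\psi=-\varepsilon e^{U_0}$ and is extended by $\varphi(s+1/2)=-\varphi(s)$. The threshold $16$ is the equality case of Lemma~\ref{lem:one-dimensional-mass} for an interval of length $1/2$ carrying mass $\mu$.

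\textbf{A small correction to Step 3.} The identity $\lambda_1(E_\tau)=4\pi^2/b^2$ already holds for $b\ge1$. The hypothesis $\mathrm{Im}\,\tau>\pi/2$ is what makes the range $\frac12\lambda_1(E_\tau)|E_\tau|<\rho<4\pi$ nonempty.
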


Note that $\frac12\lam(E_\tau)|E_\tau|<4\pi$ if $\mathrm{Im}\,\tau>\pi/2$; see Section \ref{Sec 6}.
Theorem \ref{thm:intro-nonuniqueness} explains why a uniqueness criterion valid uniformly over all
flat tori must involve the spectral quantity
$\lambda_1(\mathbb{T})|\mathbb{T}|$. More precisely, if
\[
    L_\tau
    :=
    \lambda_1(E_\tau)|E_\tau|,
\]
then Theorem~\ref{thm:main}, specialized to $N=2$, gives uniqueness for
\[
    \rho
    \leq
    \frac{4\pi \kappa L_\tau}{4\pi^2+\kappa L_\tau}
    =
    \frac{\kappa L_\tau}{\pi}
    +O(L_\tau^2)
    \qquad\text{as }L_\tau\to0,
\]
whereas Theorem~\ref{thm:intro-nonuniqueness} gives non-uniqueness for
$
    \rho>\frac{L_\tau}{2}
$
for the above family of tori and singular points. Thus, in the degenerating
regime, the linear dependence on
$\lambda_1(\mathbb{T})|\mathbb{T}|$ has the correct order, although these two
results do not assert that the numerical constant in
Theorem~\ref{thm:main} is optimal.

\begin{remark}\label{rmk}
In Theorem \ref{thm:intro-nonuniqueness} (ii), we have
$$\frac{4(1+\varepsilon)}{\pi^2}
   \lam(E_\tau)|E_\tau|\searrow \frac{4}{\pi^2}
   \lam(E_\tau)|E_\tau|\quad\text{as }\varepsilon\to 0,$$
   while in Theorem \ref{thm:intro-asymptotic-uniqueness} with $N=2$,
   $$\frac{8(1-\varepsilon)}{N\pi^2}\lam(\T)|\T|\nearrow \frac{4}{\pi^2}\lam(\T)|\T|\quad\text{as }\varepsilon\to 0.$$
   This indicates that the range $0<\rho\leq
 \frac{8(1-\varepsilon)}{N\pi^2}\lam(\T)|\T|$ in Theorem \ref{thm:intro-asymptotic-uniqueness} is almost optimal at least for $N=2$.
\end{remark}

On the other hand, the solution structure of \eqref{eq:two-source} is not
controlled by the spectral quantity alone. It may depend sensitively on both
the conformal modulus $\tau$ and the position of the singular point $p$. To
formulate a complementary near-critical uniqueness result, let
$G(z)=G(z;\tau)$ be the normalized Green function of $E_\tau$, defined by
\[
    -\Delta G
    =
    \delta_0-\frac{1}{|E_\tau|},
    \qquad
    \int_{E_\tau}G\,dA=0,
\]
and define
\begin{equation}\label{eq:def-Gp}
    G_p(z)
    :=
    \frac{1}{2}
    \bigl(
        G(z+p)+G(z-p)
    \bigr),\quad\text{for }p\notin E_{\tau}[2].
\end{equation}
The four points in $E_\tau[2]$ are always critical points of $G_p$ and will
be called the trivial critical points. A critical point outside
$E_\tau[2]$ will be called nontrivial.

\begin{theorem}\label{thm 1.5}
Fix $\tau\in\mathbb H$ and $p\in E_\tau\setminus E_\tau[2]$.
Assume that the only critical points of $G_p$ are the four points in
$E_\tau[2]$, and that all four are nondegenerate.
Then there exists $\varepsilon=\varepsilon(\tau,p)>0$ such that,
for every $\rho\in(4\pi-\varepsilon,4\pi)$, equation~\eqref{eq:two-source}
admits a unique solution, which is even.
\end{theorem}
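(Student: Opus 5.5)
We argue by contradiction combined with a blow-up analysis as $\rho\nearrow 4\pi$. Suppose there are $\rho_k\nearrow 4\pi$ such that \eqref{eq:two-source} with $\rho=\rho_k$ has two distinct solutions, or a solution that is not even. The equation is invariant under $z\mapsto -z$, so if $u$ solves it then so does $u(-z)$. Hence non-evenness also produces two distinct solutions. The plan has three steps: (a) show that every solution family with $\rho_k\to4\pi$ blows up, and locate the blow-up point among the critical points of $G_p$; (b) count solutions near each admissible blow-up point via a degree/Lyapunov--Schmidt argument; (c) prove local uniqueness at each blow-up point for $\rho<4\pi$ and, by exclusion, force the unique solution to be even.

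\textbf{Step (a): blow-up and its location.} Write $u=w+\rho\,(\text{singular part})$ with $w$ solving a regular mean field equation
\[
\Delta w+2\rho\Big(\frac{h e^{w}}{\int h e^w}-\frac1{|E_\tau|}\Big)=0,\qquad h=e^{-8\pi\rho G_p\cdot(\dots)}\sim |z\mp p|^{\rho/\pi},
\]
with total mass $2\rho\to 8\pi$. The critical value for blow-up at a regular point is $8\pi$, while at $\pm p$ the local mass would be at least $8\pi(1+\rho/2\pi)>8\pi$. So the only way compactness fails is a single blow-up at a regular point $q$. I expect compactness to fail, because $2\rho=8\pi$ is critical and the parameter range $(4\pi-\varepsilon,4\pi)$ is a one-sided neighborhood of it.

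Next I locate $q$. The standard Pohozaev / Chen--Lin expansion for the mean field equation on a surface shows that $q$ is a critical point of the reduced function $\log h(z)+8\pi\cdot(\text{regular part of }G)(z)$. Up to additive constants this equals $-8\pi G_p(z)$, since the regular part of $G$ on a torus is constant. Hence $q$ is a critical point of $G_p$, and by hypothesis $q\in E_\tau[2]$.

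\textbf{Step (b): counting solutions near each blow-up point.} Use the Chen--Lin sharp estimates for the blow-up rate of $\rho_k-4\pi$ in terms of $\Delta\log h(q)$ and the nondegeneracy of $q$; the relevant quantity is the sign of the "$\ell(q)$" term. On a torus this term is $\Delta\log h(q)+\dots$. Since $\Delta\log h=0$ away from $\pm p$, only the global term survives, giving $\ell(q)\propto -8\pi\cdot 2\rho/|E_\tau|<0$. Consequently bubbling solutions at $q$ exist only for $\rho\to4\pi^-$, which is consistent with the side of $4\pi$ in the statement.

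The count itself comes from the Lyapunov--Schmidt degree: near a nondegenerate critical point $q$, the local degree contribution is $-\mathrm{sgn}\det D^2G_p(q)$ (up to a convention). The Leray--Schauder degree jumps from $1$ (for $2\rho<8\pi$ it is $1$) to $\chi(E_\tau)$-related values. The total degree for $2\rho<8\pi$ equals $1$ and the degree is $0$ just above the threshold, so solutions must bubble at points whose indices sum appropriately. Hence for $\rho$ slightly below $4\pi$ there are solutions concentrating only at the points $q$ with a specific index. Among the four two-torsion points, the index sum of $G_p$ is $\chi=0$; the two saddles have index $-1$, and the max/min have index $+1$.

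\textbf{Step (c): local uniqueness and evenness.} The key step, which I expect to be the main obstacle, is local uniqueness: for each nondegenerate $q$ with $\ell(q)\neq0$, at most one bubbling solution concentrates at $q$ for $\rho$ close to $4\pi$. I would prove this via the Bartolucci--Jevnikar--Lee--Yang style argument: take the difference of two solutions, normalize it, blow it up to get a kernel element of the linearized Liouville equation, and use Pohozaev identities together with nondegeneracy of $D^2G_p(q)$ to kill the translation and dilation modes.

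Given local uniqueness, uniqueness of the bubbling solution at $q$ combines with the symmetry: since $q=-q$ in $E_\tau[2]$, the reflected solution $u(-z)$ also concentrates at $q$, so it coincides with $u$ and $u$ is even. It remains to match the degree: the nonbubbling part contributes nothing near $4\pi$, the total is degree $1$, each simple bubble contributes $\pm1$, and I need exactly one point contributing. Here I would additionally need to determine which $q$ actually carry solutions on the side $\rho<4\pi$. I would use the sign computation of $\ell(q)$ combined with $\det D^2G_p(q)$, which likely selects the point(s) where the Hessian index matches. This selection may require checking that extra ones cancel; that is a delicate point I would treat carefully.
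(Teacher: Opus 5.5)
\textbf{The main gap is in step (b).} Your claim that the Chen--Lin quantity $\ell(q)$ is negative rests on ``$\Delta\log h=0$ away from $\pm p$'', and that is false. We have $\log h=-2\rho G_p+\text{const}$ and $\Delta G_p=1/|E_\tau|$ off $\pm p$. Hence $\Delta\log h(q)+8\pi/|E_\tau|=(8\pi-2\rho)/|E_\tau|$, which vanishes at $\rho=4\pi$ for every $q$. So the $\lambda_k e^{-\lambda_k}$ term carries no sign information. The side of $4\pi$ from which a sequence can bubble at $q$ is decided by the next-order quantity $D(q)$ in \eqref{bubb}. The paper turns this into Hessian data via $D(q)=-4\pi^2 b\,e^{-8\pi G_p(q)}\det D^2G_p(q)$ (Lemma~\ref{lemma1-11}).

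Your sign claim would forbid bubbling from above at every half-period, which contradicts the three solutions that exist for $\rho$ slightly above $4\pi$. More importantly, it gives you no way to exclude bubbling from below at the other half-periods. A degree count cannot do this either: solutions of opposite local degree cancel in the degree but still exist. Also, the degree just above $4\pi$ is $3$, not $0$.

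Your index count is wrong too. Since $G_p\to+\infty$ at $\pm p$, each pole contributes $+1$ in Poincar\'e--Hopf, so the indices of the critical points of $G_p$ sum to $-2$, not $0$. Under the hypotheses there is therefore exactly one point with $\det D^2G_p>0$ (the global minimum) and three saddles, i.e.\ $m^-=1$, $m^+=3$. It is not two saddles plus a max and a min. The correct selection then goes as follows:
\begin{itemize}
\item At the three saddles $D>0$, so \eqref{bubb} rules out any sequence with $\rho_k\uparrow4\pi$ blowing up there.
\item Only the minimum remains. There Theorem~\ref{thm-C} gives existence, and your step (c) (Theorem~\ref{thm-D} together with $q=-q$) gives exactly one solution, which is even.
\end{itemize}

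\textbf{Step (a) has a second gap.} ``I expect compactness to fail'' is not an argument, and the criticality of the total mass $8\pi$ does not by itself force blow-up. What forces it is the hypothesis on $G_p$, used in a way you never invoke. By Theorem~\ref{thm-CFL2}, solutions of \eqref{mean} come in scaling families in one-to-one correspondence with pairs of nontrivial critical points of $G_p$. So here \eqref{mean} has no solution, and a non-blowing-up sequence with $\rho_k\to4\pi$ would converge to one. Without this step, compact solutions near $\rho=4\pi$ could coexist with the bubbling one and destroy uniqueness.

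A minor point: $h\sim|z\mp p|^{\rho/(2\pi)}$, not $|z\mp p|^{\rho/\pi}$. Your exclusion of $\pm p$ as blow-up points is unaffected.
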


\begin{remark}
The existence of pairs $(\tau,p)$ satisfying these assumptions follows
from \cite{CFL}.
Under the same assumptions, equation \eqref{eq:two-source} has exactly three solutions,
all even, for $\rho\in(4\pi,4\pi+\varepsilon)$.
This conclusion, together with further multiplicity results for other
critical-point configurations of $G_p$, is proved in Section \ref{Sec 6}.
\end{remark}

These results distinguish uniform uniqueness estimates from the finer
behavior of one singularity equation.
Theorems \ref{thm:main} and \ref{thm:intro-asymptotic-uniqueness} provide uniqueness criteria that are uniform in the
number, locations, and strengths of the singularities.
The two-singularity examples  in Theorem \ref{thm:intro-nonuniqueness} show that nonuniqueness can occur
at arbitrarily small total mass as the torus degenerates; moreover,
the rectangular examples establish the asymptotic optimality of
Theorem~\ref{thm:intro-asymptotic-uniqueness} already for two singularities.
In contrast, Theorem~\ref{thm 1.5} gives uniqueness in a left neighborhood of
$\rho=4\pi$ under the stated assumptions on $G_p$.
Together with the results of Section~\ref{Sec 6}, this shows that the spectral
quantity $\lambda_1(\T)|\T|$ governs the uniform small-mass uniqueness
scale, while the critical-point structure of $G_p$ provides more detailed
information on uniqueness and multiplicity near the critical parameter.

The organization of this paper is as follows.  In Section \ref{Sec 2}, we establish uniqueness for sufficiently small parameters, which provides the starting point for the subsequent continuation argument. Section \ref{Sec 3} studies the nodal structure of Jacobi fields and shows, under the relevant mass bound, that their nodal domains must be parallel annuli. In Section \ref{Sec 4}, we prove an Alexandrov–Bol type inequality on cylinders and derive its sharp small-capacity asymptotics. These analytic and topological estimates are combined in Section \ref{Sec 5} with the spectral geometry of flat tori to prove the uniform uniqueness results, Theorems \ref{thm:main} and \ref{thm:intro-asymptotic-uniqueness}. Finally, Section \ref{Sec 6} constructs non-unique solutions by variational arguments and analyzes the solution structure near the critical parameter \(4\pi\) through the critical points of the Green function and blow-up theory, thereby proving Theorems \ref{thm:intro-nonuniqueness} and \ref{thm 1.5} as well as the accompanying multiplicity results.

\section{Start up: uniqueness for small parameter}\label{Sec 2}
To prove uniqueness, our main strategy is a continuity argument, which consists of two steps:
\begin{itemize}
    \item For sufficiently small $\rho>0$, equation \eqref{Sec 2 equ-1} has a unique solution.
    \item There exists a threshold $\tilde{\rho}>0$ such that, for every $\rho\in(0,\tilde{\rho}]$, the linearized operator at any solution is non-degenerate. Equivalently, if $u$ is a solution of \eqref{Sec 2 equ-1} and $\phi$ satisfies
    \[
        \Delta \phi+e^u\phi=0
        \qquad \text{in }\T,
    \]
    then $\phi\equiv0$.
\end{itemize}
Combining this with the compactness results in \cite{LiYY}, we obtain the uniqueness of solutions to \eqref{eq:main}.
In this section, we establish Step~1.

\begin{lemma}Let $p_1,\ldots,p_N\in\T$ be distinct points and let
$\beta_1,\ldots,\beta_N>0$. Set
$
    N\rho:=\sum_{j=1}^N\beta_j.
$
Then there exists a small
$
    \rho_0=\rho_0(\T,p_1,\ldots,p_N)\in (0, 8\pi/N)
$
such that, whenever $\rho\in(0,\rho_0)$, the equation
\begin{align}\label{Sec 2 equ-1}
    \Delta u+e^u
    =\sum_{j=1}^N\beta_j\delta_{p_j}
    \qquad\text{in }\T
\end{align}
admits a unique solution.
\end{lemma}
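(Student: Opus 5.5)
We reduce \eqref{Sec 2 equ-1} to a regular mean field problem and argue by contradiction via a blow-down (linearization at $\rho=0$). Let $G$ be the Green function of $\T$ and set
\[
h(x):=-\sum_{j=1}^N\beta_j G(x-p_j),\qquad \Delta h=\sum_j\beta_j\delta_{p_j}-\frac{N\rho}{|\T|}.
\]
Writing $u=h+w$, equation \eqref{Sec 2 equ-1} becomes
\[
\Delta w+e^{h}e^{w}=\frac{N\rho}{|\T|},
\]
and integrating gives $\int_\T e^{h+w}\,dA=N\rho$. Set $w=v+c$ with $\int_\T v\,dA=0$ and $e^c=N\rho/\int_\T e^{h+v}\,dA$. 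Then
\[
\Delta v+N\rho\left(\frac{e^{h}e^{v}}{\int_\T e^{h+v}\,dA}-\frac{1}{|\T|}\right)=0 .
\]
Solutions of \eqref{Sec 2 equ-1} correspond bijectively to solutions $v$ with zero mean. Since $\beta_j>0$, we have $e^h=\prod_j |x-p_j|^{\beta_j/2\pi}\times(\text{smooth, positive})$, so $e^h$ is bounded and vanishes only at the $p_j$. The weight depends on $\beta_j$, not only on $\rho$, but all these dependencies will be uniform in our estimates.

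\textbf{Step 1: a priori smallness.} Let $0<N\rho<8\pi$; we may restrict to $N\rho\le 4\pi$. Multiply the $v$-equation by $v$. By Jensen's inequality, $\int_\T e^{h+v}\,dA\ge C^{-1}$, because $\int_\T v=0$ and $h$ is bounded below in the average sense: $\int_\T h\,dA=0$ by normalization of $G$, and $e^h>0$ a.e. Hence $\|\nabla v\|_2^2\le N\rho\,\|e^{h+v}/\!\int e^{h+v}\|_{L^2}\|v\|_{L^2}$. The Moser--Trudinger inequality bounds $\int_\T e^{h+2v}\le \|e^h\|_\infty\int_\T e^{2v}\le C e^{C\|\nabla v\|_2^2}$. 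A bootstrap then gives a uniform bound $\|\nabla v\|_2\le C$ for $N\rho\le 4\pi$; alternatively one can invoke the compactness result of \cite{LiYY}, since $N\rho<8\pi$ rules out blow-up. With $\|\nabla v\|_2$ bounded, the right-hand side $N\rho\,(e^{h+v}/\!\int e^{h+v}-1/|\T|)$ has $L^p$ norm $O(\rho)$ for some $p>1$, so elliptic estimates give $\|v\|_{W^{2,p}}+\|v\|_\infty\le C\rho$. Thus every solution is $O(\rho)$-close to $0$.

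\textbf{Step 2: local uniqueness (main step).} Suppose $v_1\ne v_2$ are two solutions, and put $\phi=(v_1-v_2)/\|v_1-v_2\|_\infty$, which has zero mean. Subtracting the equations and using the mean value theorem together with $v_i=O(\rho)$,
\[
|\Delta\phi|\le N\rho\,\bigl|\mathcal{F}(v_1)-\mathcal{F}(v_2)\bigr|/\|v_1-v_2\|_\infty\le C\rho\,(1+|\phi|)\,e^h ,
\]
where $\mathcal{F}(v)=e^{h+v}/\!\int e^{h+v}$ and its derivative is bounded in $L^\infty\to L^p$ near $v=0$. Elliptic estimates for zero-mean functions give $\|\phi\|_\infty\le C\|\Delta\phi\|_{L^p}\le C\rho$, which contradicts $\|\phi\|_\infty=1$ once $\rho<\rho_0:=1/C$. (Equivalently, the map $v\mapsto \Delta^{-1}[N\rho(\mathcal F(v)-1/|\T|)]$ is a contraction on a small ball, and by Step 1 every solution lies in that ball.) Existence also follows from this contraction, or from the variational argument recalled in the introduction.

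\textbf{Main obstacle.} The main obstacle is the uniform a priori bound in Step 1. The constant $C$ must be controlled for fixed $\T$ and $p_j$, uniformly in the splitting $(\beta_j)$ with small total mass. Since $h=-\sum\beta_jG(\cdot-p_j)$ has $\|h\|_\infty$ large only near the $p_j$, where it tends to $-\infty$, $e^h\le C$ uniformly, and $\int e^{h}\,dA\ge e^{\int h/|\T|}|\T|=|\T|$ by Jensen. This uniformity, combined with Moser--Trudinger, is exactly what makes the constants independent of the $\beta_j$. The dependence of $\rho_0$ on $(\T,p_j)$ allowed in the statement comes only through the elliptic constants on $\T$; it is harmless.
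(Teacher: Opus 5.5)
Your architecture matches the paper's. You reduce to the zero-mean $v$-equation with weight $e^{h}$, prove $\|v\|_\infty=O(\rho)$ for every solution, and then run the difference/contraction estimate. Your Step 2 is essentially the paper's final step; the paper works in $W^{2,2}$ with $\|P(v_1)-P(v_2)\|_{L^2}\le C\|v_1-v_2\|_{L^2}$.

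\textbf{The gap is in Step 1.} Testing with $v$ gives $\|\nabla v\|_2^2=N\rho\int_\T P(v)v\,dA\le N\rho\|P(v)\|_2\|v\|_2$. Adding Poincar\'e, $e^h\le C$, $\int_\T e^{h+v}\,dA\ge|\T|$ and Moser--Trudinger, you get only
\[
X\le C\rho\,e^{X^2/(8\pi)},\qquad X=\|\nabla v\|_2 .
\]
Every sufficiently large $X$ satisfies this, so it says only that $X$ is either $O(\rho)$ or very large. No bootstrap can exclude the second branch, because the energy identity alone never gives a priori bounds for mean field equations.

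The fallback does not repair this as stated. A compactness theorem (for positive singular sources the relevant one is \cite{BT} rather than \cite{LiYY}) bounds solutions for fixed data. You need $\|v\|_\infty\le C\rho$ uniformly as $\rho\to0$ and over all splittings $(\beta_j)$, since $\rho_0$ may depend only on $(\T,p_1,\dots,p_N)$. That is exactly the uniformity you flagged as the main obstacle, and Moser--Trudinger does not supply it. Getting it from compactness would need an additional blow-up/contradiction argument that you do not give.

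\textbf{The missing idea.} The right-hand side $N\rho\bigl(P(v)-|\T|^{-1}\bigr)$ has $L^1$ norm at most $2N\rho$, which is small. Write
\[
v(x)=N\rho\int_\T G(x,y)\bigl(P(v)(y)-|\T|^{-1}\bigr)\,dA_y ,
\]
and apply Jensen with respect to the probability measure $d\mu=\tfrac12\bigl(P(v)+|\T|^{-1}\bigr)\,dA$:
\[
\int_\T e^{2|v|}\,dA\le\int_\T\int_\T e^{4N\rho|G(x,y)|}\,d\mu(y)\,dA_x\le\sup_{y}\int_\T e^{4N\rho|G(x,y)|}\,dA_x\le C\sup_y\int_\T |x-y|^{-2N\rho/\pi}\,dA_x\le C ,
\]
valid for $N\rho<\pi$. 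This is a Brezis--Merle type estimate. It needs no prior energy bound, and it is uniform in the $\beta_j$ because it uses only $\int_\T P(v)\,dA=1$.

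Combined with $e^h\le C$ and $\int_\T e^{h+v}\,dA\ge|\T|$, it gives $\|P(v)\|_{L^2}\le C$. Hence $\|v\|_{W^{2,2}}\le C\rho$, and so $\|v\|_\infty\le1$ for small $\rho$. From there your Step 2 goes through unchanged.
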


\begin{proof}
	Throughout the proof, $C$ denotes a positive constant depending only on
	$\T$ and $p_1,\ldots,p_N$. Let $G$ be the Green function on $\T$ normalized by
	\[
	-\Delta_xG(x,q)=\delta_q-\frac1{|\T|},
	\qquad
	\int_{\T}G(x,q)\,\ud A_x=0 .
	\]
	Set
	\[
	S(x)=-\sum_{j=1}^N\beta_jG(x,p_j),
	\qquad
	h=e^S,
	\]
	then
	\[
	\Delta S=\sum_{j=1}^N\beta_j\delta_{p_j}-\frac {N\rho}{|\T|}.
	\]
	Moreover, since $\beta_j>0$ for $1\leq j\leq N$,  if $\rho \le 1$, then
	$
	0\le h\le C .
	$

    Let $0<\rho<\min\{1, 8\pi/N\}$. Then it was pointed out in Section \ref{sec 1} that \eqref{Sec 2 equ-1} always has solutions.
	Let $u$ be a solution. Since integration gives
	\[
	\int_{\T}e^u\,\ud A=N\rho,
	\]
	we write
	\[
	u=S+v+c,
	\qquad
	\int_{\T}v\,\ud A=0 .
	\]
	Then
	\begin{align}\label{Small lem equ-a0}
	-\Delta v
	=
	N\rho\left(
	\frac{he^v}{\int_{\T}he^v\,\ud A}
	-\frac1{|\T|}
	\right).
	\end{align}
By the Green
	representation formula, we can get the integral representation
	\[
	v(x)=N\rho \int_{\T}G(x,y)
	\left(P(v)(y)-\frac1{|\T|}\right)\ud A_y,
	\]
	where 
	\begin{align*}
	P(v)=\frac{he^v}{\int_{\T}he^v\,\ud A}.
	\end{align*}
 Then, by Jensen's inequality and \begin{align*}
 	\left|G(x,y)-\frac{1}{2\pi}\log\frac{1}{|x-y|}\right|\leq C,
 \end{align*}for $\rho>0 $ sufficiently small, we can estimate
	\begin{align}\label{Small Lem equ-a}
	\int_{\T}e^{2|v(x)|}\,\ud A_x\leq &\int_{\T}\exp\left(\int_{\T}4N\rho|G(x,y)|\frac{P(v)(y)+\frac{1}{|\T|}}{2}\ud A_y\right)\ud A_x \nonumber\\
    \leq & \int_{\T}\int_{\T}e^{4N\rho|G(x,y)|}\frac{P(v)(y)+\frac{1}{|\T|}}{2}\ud A_y\ud A_x\nonumber\\    
	\leq& C\int_{\T} \frac{P(v)(y)+\frac{1}{|\T|}}{2}\int_{\T}\frac{1}{|x-y|^{\frac{2\rho N}{\pi}}}\ud A_x \ud A_y\leq C.
	\end{align}
	
	Next, since
	\[
	\int_{\T}S\,\ud A=\int_{\T}v\,\ud A=0,
	\]
	Jensen's inequality gives
	\begin{align}\label{Small Lem equ-b}
	\int_{\T}he^v\,\ud A
	=
	\int_{\T}e^{S+v}\,\ud A
	\ge |\T|.                                 
	\end{align}
	Combining $h\le C$, \eqref{Small Lem equ-a} and \eqref{Small Lem equ-b}, we obtain
	\[
	\|P(v)\|_{L^2(\T)}\le C .
	\]
	Therefore, from \eqref{Small lem equ-a0},
	\[
	\|v\|_{W^{2,2}(\T)}\le C\rho .          
	\]
	Since $W^{2,2}(\T)\hookrightarrow C^0(\T)$, after decreasing $\rho_0$ if
	necessary, every solution with $0<\rho<\rho_0$ satisfies
	\[
	\|v\|_{C^0(\T)}\le 1 .                  
	\]
	
	Now let $v_1$ and $v_2$ be two solutions of \eqref{Small lem equ-a0}, and set
	\[
	w=v_1-v_2 .
	\]
	Then
	\begin{align}\label{Small lem equ-c0}
	-\Delta w=N\rho\bigl(P(v_1)-P(v_2)\bigr),
	\qquad
	\int_{\T}w\,\ud A=0 .                
	\end{align}
	We claim that
	\begin{align}\label{Small lem equ-c}
	\|P(v_1)-P(v_2)\|_{L^2(\T)}
	\le C\|v_1-v_2\|_{L^2(\T)} .           
	\end{align}
	Indeed,
	\[
	P(v_1)-P(v_2)
	=
	\frac{h(e^{v_1}-e^{v_2})}{\int_{\T}he^{v_1}}
	+
	he^{v_2}
	\frac{\int_{\T}h(e^{v_2}-e^{v_1})}
	{\left(\int_{\T}he^{v_1}\right)
		\left(\int_{\T}he^{v_2}\right)} .
	\]
	Using \eqref{Small Lem equ-b}, $h\le C$, and $\|v_i\|_{C^0}\le1$, this immediately gives
	\eqref{Small lem equ-c}.
	
	Applying the elliptic estimate to \eqref{Small lem equ-c0}, we get
	\[
	\|w\|_{W^{2,2}}
	\le
	C\rho\|P(v_1)-P(v_2)\|_{L^2}
	\le
	C\rho\|w\|_{L^2}
	\le
	C\rho\|w\|_{W^{2,2}} .
	\]
	Choose $\rho_0>0$ so small that $C\rho_0<1$. Then for any $0<\rho<\rho_0$ we have $w\equiv0$, hence
	$v_1=v_2$. Since
	\[
	e^c=\frac{N\rho}{\int_{\T}he^v\,\ud A},
	\]
	the constant $c$ is also uniquely determined. Therefore $u_1=u_2$.
\end{proof}
In the following two sections, we focus on finding a specific number $\tilde \rho$ such that the Jacobi field $\phi\equiv 0$ for $\rho\in (0,\tilde \rho]$; namely, the PDE
$$
        \Delta \phi+e^u\phi=0
        \qquad \text{in }\T, \quad \rho\in (0,\tilde \rho]
$$
has only the trivial solution.

\section{Nodal set analysis}\label{Sec 3}
In this section, we apply an Alexandrov–Bol type inequality to analyze the nodal sets of solutions on the torus $\T$. To this end, we first recall the following Bol's inequality:

\begin{lemma}[{\cite[Lemma 4.2]{LW5}}]
	\label{lem:weighted-Bol Ineq}
	Let \(\Omega\subset\mathbb R^2\) be a bounded simply connected domain whose boundary is
	piecewise \(C^2\). Assume that, in the sense of distributions,
	\begin{equation}\label{eq:positive-singular-Liouville}
		\Delta U+e^U
		=
		\sum_{j=1}^{N}\beta_j\delta_{q_j}
		\qquad\text{in } \Omega,
		\qquad
		\beta_j\ge0,\quad q_j\in \overline{\Omega},
	\end{equation}
	and assume the regular part of $U$ is $C^2(\Omega)$ and $\int_\Omega e^U\leq 8\pi$, then for any smooth subdomain $\Omega^{'}\subset \Omega$ it holds
	\begin{align}\label{Bol Inequ}
		\left(\int_{\partial \Omega^{'}}e^{U/2}\right)^2\geq \frac{1}{2}\left(\int_{ \Omega^{'}}e^{U}\right)\left(8\pi-\int_{\Omega^{'}}e^{U} \right).
	\end{align}
\end{lemma}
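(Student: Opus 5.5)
The plan is to reduce the singular Bol inequality to the classical Alexandrov--Bol inequality for the metric $g=e^U|dz|^2$, whose Gaussian curvature is $K\le 1$ away from the $q_j$, with the positive sources acting as cone points of angle $2\pi(1+\beta_j/2\pi)>2\pi$.

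\textbf{Step 1: subharmonicity and the Bol inequality for $\Omega'$.} Away from the $q_j$ the Gaussian curvature of $g$ is
\[
K=-\tfrac12 e^{-U}\Delta U=\tfrac12 .
\]
The normalization in \eqref{Bol Inequ}, namely $8\pi$ and the factor $\tfrac12$, is exactly the Alexandrov--Bol inequality $L^2\ge A(4\pi-KA)$ with $K=\tfrac12$, $L=\int_{\partial\Omega'}e^{U/2}$ and $A=\int_{\Omega'}e^U$. At each $q_j$ we have
\[
\Delta\log\!\big(e^{U}\big)=-e^U+\sum_j\beta_j\delta_{q_j}\ge -e^U
\]
in the sense of distributions. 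Hence $\log$ of the conformal factor satisfies $\Delta\log\lambda+\lambda\cdot\tfrac{1}{2}\cdot 2\ge0$ with $\lambda=e^U$. This is precisely the hypothesis of the classical Alexandrov--Bol inequality for metrics of curvature $\le\tfrac12$ on simply connected domains (Alexandrov, Bandle, Suzuki form): if $\Delta\log\lambda+2K_0\lambda\ge0$ on a simply connected domain, then every subdomain satisfies $L^2\ge A(4\pi-K_0A)$. Positive point masses only improve the inequality, since they add nonnegative measure to $\Delta\log\lambda$, and Bandle's formulation allows $\Delta\log\lambda$ to be a measure bounded below.

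\textbf{Step 2: cases for $\Omega'$.} If $\Omega'$ is simply connected, Step 1 applies directly; $\Omega'$ inherits simple connectivity as a subdomain of $\Omega$ after filling holes. If $\Omega'$ is not simply connected, I would apply the inequality to each component and to the domain obtained by filling in its holes, which stays inside $\Omega$ because $\Omega$ is simply connected. Filling holes increases $A$ and decreases $L$. Since $A\le\int_\Omega e^U\le 8\pi$, the map $A\mapsto A(8\pi-A)$ is increasing only on $[0,4\pi]$, so the monotonicity must be handled with care. For several components I would use superadditivity: $(\sum L_i)^2\ge\sum L_i^2$, together with
\[
\sum A_i(8\pi-A_i)\ge \Big(\sum A_i\Big)\Big(8\pi-\sum A_i\Big).
\]

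\textbf{Step 3: the main obstacle.} The hardest step is the hole-filling monotonicity when $A>4\pi$. To handle it, I would invoke Bandle's proof directly via the isoperimetric comparison with the sphere of curvature $\tfrac12$ and total area $8\pi$. That proof works with level-set rearrangement of a symmetrized problem, not with hole filling, and it yields the inequality for arbitrary smooth subdomains of a simply connected domain. The role of the hypothesis $\int_\Omega e^U\le 8\pi$ is to keep the comparison within the sphere's total area, so that the isoperimetric profile $\tfrac12 A(8\pi-A)$ of the sphere of radius $\sqrt2$ is the right benchmark. Since the statement is quoted from \cite[Lemma 4.2]{LW5}, the proof would ultimately cite this classical argument, carrying the singular sources as nonnegative measure corrections.
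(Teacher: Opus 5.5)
The paper gives no proof of this lemma. It quotes it from \cite[Lemma 4.2]{LW5} and only adds a remark that singular points on $\partial\Omega'$ are handled by interior approximation. Your Step~1 is fine: curvature $\tfrac12$, positive sources as cone points of angle larger than $2\pi$, and reduction to the classical Alexandrov--Bol inequality. Your superadditivity argument for disconnected $\Omega'$ is also fine.

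The gap is exactly the one you flag, and Step~3 does not close it. An unspecified appeal to ``Bandle's proof'' is not an argument. The role you assign to $\int_\Omega e^U\le 8\pi$ (``keeping the comparison within the sphere's total area'') is also not what makes the multiply connected case work. What you can safely take from the classical theory is the inequality for \emph{simply connected} subdomains. If the ``every subdomain'' version you quote in Step~1 held as stated, the mass hypothesis would be irrelevant and your Steps~2--3 unnecessary. Hole-filling alone genuinely fails because it throws away the inner boundary length: for example, $A=6\pi$ with filled area $8\pi$ yields only $L^2\ge 0$.

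The missing idea is to apply the simply connected inequality to the holes as well. Let $\Omega'$ be connected with outer boundary $\Gamma_0$ and inner boundaries $\Gamma_1,\dots,\Gamma_k$. Let $D_i$ be the Jordan region bounded by $\Gamma_i$; each $D_i$ is simply connected and contained in $\Omega$, since $\Omega$ is simply connected. Set $\ell_i=\int_{\Gamma_i}e^{U/2}$, $x=\int_{D_0}e^U$, $a_i=\int_{D_i}e^U$ and $s=\sum_{i\ge1}a_i$. Then $L=\ell_0+\sum_{i\ge1}\ell_i$, $A=x-s$, and $x\le\int_\Omega e^U\le 8\pi$. Applying the simply connected inequality to $D_0$ and to every $D_i$ gives
\[
2L^2\ge 2\ell_0^2+2\sum_{i\ge1}\ell_i^2\ge x(8\pi-x)+\sum_{i\ge1}a_i(8\pi-a_i).
\]
Expanding,
\[
x(8\pi-x)+\sum_{i\ge1}a_i(8\pi-a_i)-(x-s)\bigl(8\pi-(x-s)\bigr)=2s(8\pi-x)+\Bigl(s^2-\sum_{i\ge1}a_i^2\Bigr)\ge0 .
\]
This is exactly where $\int_\Omega e^U\le 8\pi$ is used, through $x\le 8\pi$. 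With this step in place of Step~3, your outline becomes a complete reduction of the lemma to the classical simply connected Alexandrov--Bol inequality with positive cone singularities.
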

\begin{remark}
Lemma~\ref{lem:weighted-Bol Ineq} also holds when some $q_j\in\partial\Omega' $.
Indeed, one can approximate $\Omega'$ from the interior by smooth 
 domains avoiding the boundary singularities and then pass to the
limit. The asymptotic behavior
$
U(x)=\frac{\beta_j}{2\pi}\log|x-q_j|+O(1), \beta_j\geq0,
$
ensures that the contributions of the artificial boundary arcs vanish in
the limit. We refer to
\cite{BartolucciCastorina2019,BartolucciJevnikarLin2019}
for related approximation arguments for singular Alexandrov--Bol
inequalities.
\end{remark}

Using  the Bol's inequality \eqref{Bol Inequ}, we can derive the following weighted eigenvalue estimate.

\begin{lemma}
	\label{lem:weighted-spectral-positive-singularities}
	Assume that $U$ is a solution of \eqref{eq:positive-singular-Liouville} and 
set
	\[
	\sigma:=\int_\Omega e^U\,dx\le4\pi.
	\]
Recall $\kappa>1$ defined in \eqref{kappa}.	Then, for every \(v\in H_0^1(\Omega)\),
	\begin{equation}\label{eeffc}
\int_\Omega|\nabla v|^2\,dx\geq\left(\frac{4\pi\kappa^2}{\sigma}+1-\kappa^2\right)		\int_\Omega e^U v^2\,dx.
	\end{equation}
\end{lemma}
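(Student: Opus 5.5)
We prove \eqref{eeffc} by symmetrization with respect to the measure $e^U\,dx$. Bol's inequality \eqref{Bol Inequ} converts $U$ into a comparison problem on a spherical cap, or equivalently on a disk carrying the radial Liouville density, and the first Dirichlet eigenvalue there is computable through Bessel functions.

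\emph{Step 1: reduction.} By density and since $|\nabla|v||=|\nabla v|$ a.e., it suffices to treat $v\ge0$ smooth with compact support in $\Omega$. Because $\beta_j\ge0$, the function $e^U$ is bounded near each $q_j$. Applying Lemma \ref{lem:weighted-Bol Ineq} requires $\int_\Omega e^U\le 8\pi$, which holds since $\sigma\le4\pi$. For a.e.\ $t>0$ the superlevel set $\Omega_t=\{v>t\}$ is a smooth subdomain, possibly after approximation and using the remark after Lemma \ref{lem:weighted-Bol Ineq} for singular points on the boundary. Set $m(t)=\int_{\Omega_t}e^U\le\sigma$. Bol's inequality then gives
\[
\Big(\int_{\partial\Omega_t}e^{U/2}\Big)^2\ge \tfrac12 m(t)\big(8\pi-m(t)\big).
\]

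\emph{Step 2: symmetrization.} Let $\mathbb R^2$ carry the standard metric $e^{W}|dx|^2$, where $W=\log\frac{8}{(1+|x|^2)^2}$, so that $\int_{\mathbb R^2}e^W=8\pi$ and the isoperimetric profile of disks satisfies Bol's inequality with equality. Define $v^*$ on the disk $B_R$ with $\int_{B_R}e^W=\sigma$, radially decreasing, such that $\int_{\{v^*>t\}}e^W=m(t)$. Equimeasurability gives $\int e^Uv^2=\int e^W(v^*)^2$.

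The standard coarea argument gives $\int|\nabla v|^2\ge\int|\nabla v^*|^2$. Indeed,
\[
-\frac{d}{dt}\int_{\{v>t\}}|\nabla v|^2=\int_{\{v=t\}}|\nabla v|\ge \frac{\big(\int_{\{v=t\}}e^{U/2}\big)^2}{\int_{\{v=t\}}e^U/|\nabla v|}=\frac{\big(\int_{\{v=t\}}e^{U/2}\big)^2}{-m'(t)},
\]
and by Step 1 the numerator is at least the corresponding quantity for $v^*$, for which all these inequalities are equalities. Hence the Rayleigh quotient of $v$ is at least
\[
\mu(\sigma):=\inf_{w\in H^1_0(B_R)}\frac{\int|\nabla w|^2}{\int e^W w^2}.
\]

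\emph{Step 3: bounding $\mu(\sigma)$.} Via stereographic projection, $\mu(\sigma)$ is the first Dirichlet eigenvalue of the round sphere of curvature $1/2$ (total area $8\pi$) on a cap of area $\sigma\le4\pi$. It remains to show
\[
\mu(\sigma)\ge \frac{4\pi\kappa^2}{\sigma}+1-\kappa^2.
\]
This is the step I expect to be the main obstacle. At $\sigma=4\pi$ (the hemisphere) the eigenvalue equals $1$, matching the right-hand side, which is also $1$. As $\sigma\to0$ the cap is nearly flat, with $\mu\approx j_{0,1}^2\pi/(\sigma/2)$ in the scaled metric; this gives $4\pi\kappa^2/\sigma$ to leading order, consistent with $\kappa=j_{0,1}/2$.

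To obtain the inequality in between, I would use a Sturm comparison or a test/supersolution argument. Write $f(\sigma)=\sigma(\mu(\sigma)-1)$; the target is $f(\sigma)\ge\kappa^2(4\pi-\sigma)$. The cap eigenfunctions are Legendre functions $P_\nu(\cos\theta)$ with $\mu=\nu(\nu+1)/2$. I would show that $f$ is concave in $\sigma$ (or otherwise lies above the chord) on $[0,4\pi]$, with endpoint values $f(0)=4\pi\kappa^2$ and $f(4\pi)=0$. If concavity is hard to prove, the fallback is Barta's inequality with an explicit Bessel-type trial function $J_0(c\,r)$ in geodesic polar coordinates, choosing $c$ to match the cap. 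Either route yields \eqref{eeffc}.
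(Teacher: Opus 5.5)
\textbf{There is a genuine gap in Step~3.} Your Steps 1--2 are the same symmetrization the paper uses (the paper's $U_0=2\log\frac{8}{8+|x|^2}$ is just a dilate of your $W$). Your endpoint values $f(0^+)=4\pi\kappa^2$ and $f(4\pi)=0$ are correct. One small slip: the small-cap eigenvalue is $j_{0,1}^2\pi/\sigma$, not $j_{0,1}^2\pi/(\sigma/2)$.

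Step~3 is the entire quantitative content of the lemma, and it is not proved.
\begin{itemize}
\item You state the target $f(\sigma)\ge\kappa^2(4\pi-\sigma)$ and conjecture that $f$ is concave, but give no mechanism for concavity. In the Legendre route, $\nu$ is defined only implicitly through $P_\nu(\cos\theta_\sigma)=0$, and that offers no evident way to verify concavity.
\item The Barta fallback cannot work. At $\sigma=4\pi$ the inequality is an equality: $\mu(4\pi)=1$, and the right-hand side also equals $1$. Barta's bound $\inf(-\Delta_g w/w)$ is strictly below $\lambda_1$ unless $w$ is the true ground state. On the hemisphere that ground state is $\cos(r/\sqrt2)$, which is not of the form $J_0(cr)$. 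So a Bessel trial function cannot deliver the sharp constant at or near the hemisphere.
\end{itemize}

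\textbf{The missing idea} is a change of variables that makes your conjectured concavity automatic.
\begin{itemize}
\item Since $v^*$ is radial, it suffices to bound the Rayleigh quotient on radial $f$. Put $s=\sigma^{-1}\int_{B_r}e^{W}\in[0,1]$ and $F(s)=f(r)$.
\item Then $\int_{B_R}e^Wf^2=\sigma\int_0^1F^2\,ds$ and $\int_{B_R}|\nabla f|^2=4\pi\int_0^1 s\bigl(1-\frac{\sigma s}{8\pi}\bigr)(F')^2\,ds$.
\item Both forms now live on the $\sigma$-independent class $\{F:F(1)=0\}$, and the energy weight is affine in $\sigma$. Hence $\sigma\mu(\sigma)$ is an infimum of affine functions of $\sigma$, and so it is concave.
\end{itemize}

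The paper makes this explicit. It writes
\[
s\Bigl(1-\frac{\sigma s}{8\pi}\Bigr)=\Bigl(1-\frac{\sigma}{4\pi}\Bigr)s+\frac{\sigma}{4\pi}\,s\Bigl(1-\frac s2\Bigr)
\]
and uses two endpoint inequalities:
\begin{itemize}
\item $\int_0^1 s(G')^2\ge\kappa^2\int_0^1G^2$, obtained by substituting $H(t)=G(t^2)$ and using the Dirichlet eigenvalue $j_{0,1}^2$ of the unit disk;
\item $\int_0^1 s(1-\frac s2)(G')^2\ge\int_0^1G^2$, obtained by the ground-state substitution with $w=1-s$, which solves $-\bigl(s(1-\frac s2)w'\bigr)'=w$.
\end{itemize}
Adding these with weights $4\pi-\sigma$ and $\sigma$ gives $\int|\nabla f|^2\ge\bigl(\frac{4\pi\kappa^2}{\sigma}+1-\kappa^2\bigr)\int e^Wf^2$ directly.
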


\begin{proof}
By density, it is enough to first consider \(v\in C_c^\infty(\Omega)\), \(v\ge0\).
For \(t\ge0\), define
\[
	\Omega_t:=\{x\in \Omega : v(x)>t\},\qquad
	A(t):=\int_{\Omega_t}e^U\,dx.
	\]
For almost every regular value \(t\), the boundary \(\partial \Omega_t\) is a
finite union of smooth Jordan curves.

We compare \(v\) with a radial rearrangement on the standard spherical
model. Define
\[
U_0(x)
:=
2\log\left(\frac{8}{8+|x|^2}\right),
\qquad
e^{U_0(x)}
=
\frac{64}{(8+|x|^2)^2}.
\]
Then
\[
\Delta U_0+e^{U_0}=0
\qquad\text{in }\mathbb R^2.
\]
First, we define the rearrangement of $\Omega_t$ with respect to the measures $e^{U_0}dx$ and $e^Udx$ by
\begin{align*}
    \int_{\Omega_t^*}e^{U_0}=\int_{\Omega_t}e^U,
\end{align*}
where $\Omega_t^*=B_{R(t)}:=\{x\in\mathbb R^2 : |x|<R(t)\}$ and $R(t)\leq R_{\sigma}\leq 2\sqrt{2}$. Here,
\begin{equation}\label{Rsigma}
\int_{B_{R_{\sigma}}}e^{U_0}\,dx=\frac{8\pi R_{\sigma}^2}{8+R_{\sigma}^2} =\sigma.
\end{equation}

Then, we define the rearrangement \(v^*\) of $v$ to be the radial decreasing function on \(B_{R_{\sigma}}\) defined
by
\[
	\{v^*>t\}=\{v>t\}^*=\Omega_t^*.
	\]
Thus \(v^*\) is equimeasurable with \(v\) with respect to the two measures
\(e^Udx\) and \(e^{U_0}dx\). In particular, by the layer-cake formula,
\begin{equation}\label{eq:L2-equimeasurable}
\int_{\Omega} e^Uv^2dx
=
\int_{B_{R_{\sigma}}}e^{U_0}(v^*)^2dx.
\end{equation}
By the coarea formula and Bol's inequality \eqref{Bol Inequ}, we obtain
\begin{equation}\label{eq:PS-weighted}
\int_{B_{R_{\sigma}}}|\nabla v^*|^2dx
\le
\int_{\Omega}|\nabla v|^2dx.
\end{equation}
Since \eqref{eq:PS-weighted} is standard to experts in this field, we omit the details. For further details, we refer the reader to \cite{BartolucciCastorina2019,BGJM,BartolucciJevnikarLin2019,GuiMoradifam}.

It remains to estimate the weighted first Dirichlet eigenvalue of the
model ball \(B_{R_{\sigma}}\).
 Recalling \eqref{Rsigma},
we let $f=f(r)$ be a smooth radial function on $\overline{B_{R_\sigma}}$
with $f(R_{\sigma})=0$, and set
\[
s:=\frac{1}{\sigma}\int_{B_r}e^{U_0}\,dx
=\frac{8\pi r^2}{\sigma(8+r^2)},
\qquad F(s):=f(r),\qquad 0\le s\le1.
\]
Then $F(1)=0$, and a change of variables gives
\begin{equation}\label{eq:model-barta}
\begin{aligned}
\int_{B_{R_\sigma}}e^{U_0}f^2\,dx
&=\sigma\int_0^1F^2\,ds,\\
\int_{B_{R_\sigma}}|\nabla f|^2\,dx
&=4\pi\int_0^1s\left(1-\frac{\sigma s}{8\pi}\right)(F')^2\,ds.
\end{aligned}
\end{equation}
We use the following two inequalities for $G\in C^1([0,1])$ with $G(1)=0$:
\begin{align}
&\int_0^1s(G')^2\,ds
\ge\kappa^2\int_0^1G^2\,ds,\label{ineq1}\\
&\int_0^1s\left(1-\frac{s}{2}\right)(G')^2\,ds
\ge\int_0^1G^2\,ds.\label{ineq2}
\end{align}
For the first inequality \eqref{ineq1}, set $H(t)=G(t^2)$ and use the first
Dirichlet eigenvalue $j_{0,1}^2$ of the unit disk. If $G\not\equiv0$, then
\[
\frac{\int_0^1s(G')^2\,ds}{\int_0^1G^2\,ds}
=\frac14\frac{\int_0^1t(H')^2\,dt}{\int_0^1tH^2\,dt}
\ge\frac{j_{0,1}^2}{4}=\kappa^2.
\]
For the second inequality \eqref{ineq2}, set $a(s)=s(1-s/2)$ and $w(s)=1-s$.
Since $-(aw')'=w$, integration by parts gives
\[
\begin{aligned}
\int_0^1a(G')^2\,ds-\int_0^1G^2\,ds
&=\int_0^1aw^2\left[\left(\frac{G}{w}\right)'\right]^2\,ds
\ge0,
\end{aligned}
\]
where
the boundary terms vanish because $a(0)=0$ and $G(1)=0$.
No boundary condition is imposed at $s=0$.

Since $0<\sigma\le4\pi$ and
\[
s\left(1-\frac{\sigma s}{8\pi}\right)
=\left(1-\frac{\sigma}{4\pi}\right)s
+\frac{\sigma}{4\pi}s\left(1-\frac{s}{2}\right),
\]
applying the two inequalities \eqref{ineq1}-\eqref{ineq2} to $F$ and using \eqref{eq:model-barta}  yields
\begin{align}\label{eq:model-spectral}
\int_{B_{R_\sigma}}|\nabla f|^2\,dx
\ge
\left(\frac{4\pi\kappa^2}{\sigma}+1-\kappa^2\right)
\int_{B_{R_\sigma}}e^{U_0}f^2\,dx.
\end{align}
The same estimate \eqref{eq:model-spectral} holds for every radial $f\in H_0^1(B_{R_\sigma})$
by density.

Therefore, applying \eqref{eq:model-spectral} to \(f=v^*\), and using
\eqref{eq:L2-equimeasurable} and \eqref{eq:PS-weighted}, we conclude that
\[
\begin{aligned}
\int_{\Omega} e^Uv^2\,dx
&=\int_{B_{R_{\sigma}}}e^{U_0}(v^*)^2\,dx\\
&\le
\frac{\sigma}{4\pi\kappa^2+(1-\kappa^2)\sigma}
\int_{B_{R_{\sigma}}}|\nabla v^*|^2\,dx\\
&\le
\frac{\sigma}{4\pi\kappa^2+(1-\kappa^2)\sigma}
\int_{\Omega}|\nabla v|^2\,dx.
\end{aligned}
\]
This proves \eqref{eeffc} for
\(v\in C_c^\infty(\Omega)\) with $v\geq 0$. For \(0\leq v\in H_0^1(\Omega)\), choose
\(0\leq v_k\in C_c^\infty(\Omega)\) such that \(v_k\to v\) in \(H_0^1(\Omega)\).
After passing to a subsequence, \(v_k\to v\) almost everywhere; hence,
by Fatou's lemma,
\[
\int_{\Omega}e^Uv^2\,dx
\le
\liminf_{k\to\infty}\int_{\Omega}e^Uv_k^2\,dx
\le
\frac{\sigma}{4\pi\kappa^2+(1-\kappa^2)\sigma}
\lim_{k\to\infty}\int_{\Omega}|\nabla v_k|^2\,dx,
\]
which proves \eqref{eeffc} for \(0\leq v\in H_0^1(\Omega)\). Finally, by considering $v=\max\{v,0\}-\max\{0,-v\}$, we obtain \eqref{eeffc} for general \(v\in H_0^1(\Omega)\).
\end{proof}

\begin{lemma}\label{Simply nodal set Lem}
	Let $\phi\neq 0$ be a solution of \eqref{eq:jacobi-intro}. If $\rho\leq 4\pi/N$, then $\phi$ has no simply connected nodal domain.
\end{lemma}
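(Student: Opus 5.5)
Suppose, to the contrary, that $\phi$ has a simply connected nodal domain $\Omega\subset\T$; that is, $\Omega$ is a connected component of $\{\phi\neq0\}$, and after replacing $\phi$ by $-\phi$ we may take $\phi>0$ in $\Omega$ and $\phi=0$ on $\partial\Omega$. The plan is to test the Jacobi equation against $\phi$ on $\Omega$ and compare the result with Lemma~\ref{lem:weighted-spectral-positive-singularities}.

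First I would lift $\Omega$ to the plane. Since $\Omega$ is simply connected, the covering map $\mathbb C\to\T$ restricts to a homeomorphism from a component $\tilde\Omega$ of the preimage onto $\Omega$. We need $\tilde\Omega$ to be bounded. If it were unbounded, $\Omega$ would contain a loop that is essential in $\T$, which contradicts simple connectivity, since $\pi_1(\Omega)\to\pi_1(\T)$ is trivial and lifts of loops are therefore closed. Pulling back $u$ gives $U$ on $\tilde\Omega$, which solves \eqref{eq:positive-singular-Liouville} with the singular points $p_j\in\overline\Omega$ and weights $\beta_j>0$. Its mass satisfies
\[
\sigma=\int_{\tilde\Omega}e^U\le\int_{\T}e^u=N\rho\le4\pi
\]
by \eqref{eq:def-rho}. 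In particular $\sigma\le 8\pi$, so Lemma~\ref{lem:weighted-Bol Ineq} and therefore Lemma~\ref{lem:weighted-spectral-positive-singularities} apply.

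The main technical obstacle is the regularity of $\partial\Omega$, since the lemmas are stated for piecewise $C^2$ boundaries. Near a regular point of $u$, the nodal set of $\phi$ is locally a finite union of smooth arcs meeting at equal angles, because $\phi$ solves a smooth elliptic equation there. Near $p_j$, where $e^u\sim|z-p_j|^{\beta_j/2\pi}$ is integrable and $\phi$ is Hölder, one needs a little care. I would avoid the issue by approximating $\tilde\Omega$ from inside with smooth simply connected subdomains, or by noting that the proof of Lemma~\ref{lem:weighted-spectral-positive-singularities} only uses Bol's inequality on level sets $\Omega_t$ of $v$ with $t>0$. These level sets are smooth for almost every $t$ and are compactly contained in $\tilde\Omega$. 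The piecewise $C^2$ hypothesis is then only needed on a slightly larger simply connected smooth domain, and the Remark after Lemma~\ref{lem:weighted-Bol Ineq} handles singular points lying on the boundary. We also need $\phi\in H^1_0(\Omega)$. This is standard: $\phi\in W^{1,2}(\T)$ because $e^u\in L^q$ for some $q>1$, and $\phi$ vanishes on $\partial\Omega$, so $\phi\chi_\Omega\in H_0^1(\Omega)$.

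Next, I multiply \eqref{eq:jacobi-intro} by $\phi$ and integrate over $\Omega$, using $\phi=0$ on $\partial\Omega$:
\[
\int_\Omega|\nabla\phi|^2=\int_\Omega e^u\phi^2 .
\]
Lemma~\ref{lem:weighted-spectral-positive-singularities} applied to $v=\phi$ then gives
\[
\int_\Omega e^u\phi^2=\int_\Omega|\nabla\phi|^2\ge\Bigl(\frac{4\pi\kappa^2}{\sigma}+1-\kappa^2\Bigr)\int_\Omega e^u\phi^2 .
\]
Since $\sigma\le4\pi$, the constant in parentheses is at least $\kappa^2+1-\kappa^2=1$. Equality would require $\sigma=4\pi$, so equality must be ruled out. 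When $\sigma<4\pi$ the constant is strictly greater than $1$, and since $\int_\Omega e^u\phi^2>0$ this is an immediate contradiction.

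The borderline case $\sigma=4\pi$ forces $N\rho=4\pi$ and $\int_{\T\setminus\Omega}e^u=0$. Because $e^u>0$ away from the finitely many $p_j$, this means $\T\setminus\Omega$ has measure zero. But $\phi\not\equiv0$ satisfies $\int_{\T}e^u\phi=0$, so it must change sign. Hence there is a nodal domain where $\phi<0$, and it is an open set of positive measure disjoint from $\Omega$, a contradiction. In all cases, no simply connected nodal domain exists.
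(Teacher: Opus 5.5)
Your proposal is correct and follows the paper's proof. You test the Jacobi equation with $\phi$ on $\Omega$ and apply Lemma~\ref{lem:weighted-spectral-positive-singularities}, whose constant is $\ge 4\pi/\sigma$ and $>1$ when $\sigma<4\pi$. You then get the strict inequality $\sigma<N\rho$ from a nodal domain of opposite sign, which is exactly what the paper's ``$\Omega\subsetneqq\T$'' tacitly uses.

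One small correction to your extra lifting step. The implication ``an unbounded lift would contain an essential loop'' is not valid for simply connected open sets; an embedded strip spiraling onto a closed geodesic is a counterexample. Boundedness should instead come from $\Omega$ being a face of the finite nodal graph of Theorem~\ref{thm:cheng-nodal}. Alternatively, you can avoid it altogether, since only compactly supported test functions enter the proof of Lemma~\ref{lem:weighted-spectral-positive-singularities}.
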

\begin{proof}
	
	Let \(\Omega\) be a simply connected nodal domain of \(\phi\). Replacing \(\phi\) by
	\(-\phi\) if necessary, we may assume
	\[
	\Delta\phi+e^u\phi=0, \qquad  \phi>0\quad\text{in }\Omega,
	\qquad
	\phi=0\quad\text{on }\partial \Omega.
	\]
	Testing this equation with \(\phi\), we obtain
	\begin{equation}\label{eq:Jacobi-energy-on-nodal-domain}
		\int_\Omega|\nabla\phi|^2\,dA
		=
		\int_\Omega e^u\phi^2\,dA.
	\end{equation}
	Set
	$
	N\rho_\Omega:=\int_\Omega e^u\,dA,
	$
	since \(\Omega\subsetneqq\mathbb T\),
	\[
	\rho_\Omega<\rho\leq4\pi/N.
	\]
	The Lemma \ref{lem:weighted-spectral-positive-singularities} gives
	\[
	\int_\Omega e^u\phi^2\,dA
	\le
	\frac{N\rho_\Omega}{4\pi}
	\int_\Omega|\nabla\phi|^2\,dA.
	\]
	Together with \eqref{eq:Jacobi-energy-on-nodal-domain}, this yields
	\[
	\int_\Omega|\nabla\phi|^2\,dA
	\le
	\frac{N\rho_\Omega}{4\pi}
	\int_\Omega|\nabla\phi|^2\,dA.
	\]
	Since \(\phi\not\equiv0\), we conclude that
	\[
	\rho_\Omega\ge4\pi/N,
	\]
	contradicting \(\rho_\Omega<4\pi/N\). Therefore no nodal domain of
	\(\phi\) is simply connected.
\end{proof}

The following structural theorem for nodal sets has a long history. The classical version is due to Cheng \cite{ChengNodal}. When \(h\in L^{\infty}(M)\), Helffer et al. \cite{HHOT09} proved that a similar structural result for nodal sets remains valid with \(C^1\) regularity.

\begin{theorem}
	\label{thm:cheng-nodal}
	Let \(M\) be a two-dimensional smooth Riemannian manifold, and let
	\(h\in L^\infty(M)\). Suppose that \(f\not\equiv0\) satisfies
	\[
	(\Delta+h(x))f=0
	\qquad\text{on }M .
	\]
	Then the nodal set
	\[
	Z(f):=\{x\in M:f(x)=0\}
	\]
	has the following structure:
	\begin{enumerate}
		\item The critical zeros on the nodal lines, namely the points
		\[
		\{x\in M:f(x)=0,\ \nabla f(x)=0\},
		\]
		are isolated.
		
		\item If nodal lines meet at a critical zero \(q\), then they form an
		equiangular system. More precisely, if \(q\) is a zero of order \(m\ge2\),
		then \(Z(f)\) consists locally of exactly \(2m\) \(C^1\)-arcs meeting at
		\(q\) with equal angles.
		
		\item The nodal lines consist of a number of \(C^1\)-immersed
		one-dimensional closed submanifolds. In particular, if \(M\) is compact,
		then the nodal lines are a finite union of \(C^1\)-immersed circles.
	\end{enumerate}
\end{theorem}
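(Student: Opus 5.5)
This statement is the classical structure theorem for nodal sets in dimension two, and I would reduce it to the local behaviour of solutions of a Schrödinger-type equation near a zero. The strategy is the one of Cheng and of Helffer--Hoffmann-Ostenhof--Terracini: first get a Hartman--Wintner/Bers type expansion at each zero, then read off the local geometry, then globalize by compactness.

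\emph{Step 1 (regularity and unique continuation).} Since $h\in L^\infty(M)$ and $\Delta f=-hf$, elliptic $L^p$ theory gives $f\in W^{2,p}_{\rm loc}$ for every $p<\infty$, hence $f\in C^{1,\alpha}$. The strong unique continuation property for $\Delta+h$ with bounded potential (Carleman estimates) shows that $f$ cannot vanish to infinite order at any point, because $f\not\equiv0$ and $M$ is connected. So at each zero $q$ there is a finite vanishing order $m\ge1$.

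\emph{Step 2 (local expansion).} In isothermal coordinates $z$ centred at $q$ the equation becomes $\Delta_0 f=-\tilde h f$ with $\tilde h$ bounded. I would prove, via the Bers / Hartman--Wintner argument, the expansion
\[
f(z)=P_m(z)+O(|z|^{m+\alpha}),\qquad \nabla f(z)=\nabla P_m(z)+O(|z|^{m-1+\alpha}),
\]
where $P_m=\operatorname{Re}(c\,z^m)$ with $c\neq0$ is a nonzero homogeneous harmonic polynomial of degree $m$. The method is an iteration: write $f=P+w$, where $P$ is the harmonic part. The Newtonian potential of $\tilde h f=O(|z|^m)$ gives $w=O(|z|^{m+2-\epsilon})$. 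To run this at $C^1$ level one needs the gradient bound, and this is exactly where the $L^\infty$ (rather than smooth) potential is delicate. This step is the main obstacle; I would follow the HHOT paper, which handles $h\in L^\infty$ by applying the Hartman--Wintner theorem directly.

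\emph{Step 3 (local geometry).} If $m=1$, then $\nabla f(q)\neq0$ and the implicit function theorem gives a single $C^1$ arc through $q$. If $m\ge2$, the zero set of $P_m$ consists of $2m$ rays at equal angles $\pi/m$. Since $|\nabla P_m|\sim|z|^{m-1}$ away from these rays, a rescaling argument applied to $f(rz)/r^m$ on annuli, combined with the implicit function theorem, shows that near $q$ the set $Z(f)$ consists of exactly $2m$ $C^1$ arcs emanating from $q$, tangent to those rays. The same expansion gives $\nabla f\neq0$ in a punctured neighbourhood of $q$. This proves parts (1) and (2).

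\emph{Step 4 (global structure).} Away from critical zeros, $Z(f)$ is locally a $C^1$ embedded curve. At critical zeros, the even number $2m$ of arcs can be paired off (opposite rays) to continue curves through the point. Following arcs therefore produces $C^1$-immersed one-manifolds without boundary. When $M$ is compact, Step 3 shows there are finitely many critical zeros, and compactness of $Z(f)$ then yields finitely many components, each a closed immersed circle. This proves part (3).
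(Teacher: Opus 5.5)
Your outline is correct and reconstructs the argument of the sources the paper cites for this theorem without giving a proof: Cheng for smooth potentials, and Helffer--Hoffmann-Ostenhof--Terracini for $h\in L^\infty$ via the Hartman--Wintner expansion, followed by the local equiangular picture and globalization. So it takes essentially the same approach as the paper. The one hypothesis you rightly add is connectedness of $M$, which unique continuation needs; it holds in the paper's application to $\T$.
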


\begin{lemma}\label{lem:annular-alt}
If $\rho\leq 4\pi/N$, then the set
	$$
	Z:=\{x\in\T:\phi(x)=0\}=\cup_{j=0}^{m-1}\Gamma_j
	$$
	is a finite union of pairwise disjoint essential simple closed curves, all in the
	same free homotopy class, which means that 
	\begin{align*}
		\T=\cup_{j=1}^{m}A_j\qquad\mathrm{and}\qquad \partial A_j=\Gamma_{j-1}\cup \Gamma_{j},
	\end{align*}
	where $A_j$ is a topology cylinder.
\end{lemma}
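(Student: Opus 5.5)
The plan is to combine Theorem~\ref{thm:cheng-nodal} with Lemma~\ref{Simply nodal set Lem} through a topological classification of the nodal domains of $\phi$. Here $\phi\not\equiv0$ solves \eqref{eq:jacobi-intro}, and we write $h=e^u$. Since $e^u\le C|z-p_j|^{\beta_j/2\pi}$ near each $p_j$ and $\beta_j>0$, $h$ is bounded, so Theorem~\ref{thm:cheng-nodal} applies on $\T$. Hence $Z$ is a finite graph: it consists of finitely many $C^1$-immersed circles, which meet only at isolated critical zeros, each of even degree $2m\ge4$. By \eqref{eq:jacobi-orthogonality-intro-intro-intro}, $\phi$ changes sign, so $Z\neq\emptyset$ and there are at least two nodal domains. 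Each nodal domain $\Omega$ is an open connected subset of $\T$ whose complement contains the nonempty graph $Z$. By Lemma~\ref{Simply nodal set Lem}, no $\Omega$ is simply connected.

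The key topological step is to classify the possible open connected $\Omega\subsetneq\T$ bounded by such a graph. Lifting to the universal cover, or using that $\pi_1(\Omega)\to\pi_1(\T)=\Z^2$ is the relevant map, I would argue as follows. First, if some closed curve in $\Omega$ were null-homotopic in $\T$ but not in $\Omega$, it would bound a disk in $\T$. That disk contains points of $Z$, and hence it contains an entire nodal domain $\Omega'$ that is not simply connected. Iterating this on the innermost such disk produces a contradiction, since an innermost region bounded by the graph inside a disk is simply connected. Second, if $\Omega$ contains two essential loops with linearly independent homology classes, then $\T\setminus\Omega$ lies in the complement of a wedge of two circles, which is a disk. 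Then every other nodal domain sits inside a disk, and the same innermost argument shows that some nodal domain is simply connected, again a contradiction. So every nodal domain is an open annulus whose core is an essential primitive class, and all the cores are parallel, because two essential curves in distinct classes must intersect, while disjoint domains have disjoint cores. This is the picture in the ``parallel essential loops'' figure.

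Next I rule out vertices. Suppose $q\in Z$ has degree $2m\ge4$. Locally, the $2m$ sectors around $q$ alternate in sign. Each boundary component of an annular nodal domain is a closed curve in $Z$ that is homotopic to the common core. I would show that near a vertex, two of the sectors must belong to the same nodal domain or else bound a disk face, as in the ``vertex'' figure. Either outcome produces either a non-annular domain or a trivial loop inside a domain, contradicting the previous paragraph. Concretely, I would use an Euler characteristic count. With $V$ vertices, $E$ edges and $F$ faces, $\chi(\T)=0=V-E+F$, and every face has $\chi=0$ because it is an annulus. The Euler formula for a cell complex is not directly applicable since the faces are not disks, so I would instead cut along one transverse essential curve. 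A cleaner route is to note that each annulus $A$ has $\chi(\overline A)\ge\chi(A)=0$, and that summing the relation $\chi(\T)=\sum_A\chi(A)+\chi(Z)$ gives $\chi(Z)=0$. Since $\chi(Z)=V-E=-\sum_q (m_q-1)$, we get $V=0$.

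Thus $Z$ is a disjoint union of embedded $C^1$ circles $\Gamma_0,\dots,\Gamma_{m-1}$. Each is essential, since otherwise it would bound a disk containing a simply connected nodal domain, and they are pairwise disjoint, hence all in the same free homotopy class. Cutting $\T$ along $m$ disjoint parallel essential circles yields exactly $m$ annuli $A_j$. Labeling the circles cyclically gives $\partial A_j=\Gamma_{j-1}\cup\Gamma_j$ with indices mod $m$, where $m\ge2$ is even by the sign alternation. The main obstacle is the rigorous topological classification in the second paragraph, namely the innermost-disk argument and the Euler characteristic bookkeeping for non-cellular faces. I expect most of the care to go into that step.
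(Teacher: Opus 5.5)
Your proposal is correct, but it is organized differently from the paper's proof.

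\textbf{The paper's route.} The paper rules out vertices first, using only the fact that no nodal domain is simply connected. Let $U$ be a regular neighborhood of $Z$ and $S=\T\setminus\operatorname{int}U$. Each component $S_D\simeq D$ is a compact orientable surface with nonempty boundary that is not a disk, so $\chi(S_D)\le0$. Vertices of degree $\ge4$ force $\chi(Z)\le-V<0$. Hence $0=\chi(\T)=\chi(U)+\chi(S)-\chi(\partial U)=\chi(Z)+\sum_D\chi(S_D)<0$, a contradiction. Essentiality (innermost disk) and parallelism (intersection numbers) are then proved only for a disjoint union of embedded circles, where they are elementary.

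\textbf{Your route.} You first show, with vertices still allowed, that every nodal domain is an essential annulus. You then use the same additivity identity with $\chi(A)=0$ to get $\chi(Z)=0$, hence $V=0$. This gives a slightly stronger intermediate statement, but it is unnecessary: $\chi\le0$ already suffices in the count. So the paragraph you flag as the main obstacle can simply be skipped.

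\textbf{If you keep that paragraph}, three points need fixing.
\begin{enumerate}
\item A closed curve in $\Omega$ that is null-homotopic in $\T$ need not be simple, so it need not bound a disk. Use instead a boundary curve of the compact core of $\Omega$ that cuts off a complementary disk.
\item The complement of two loops with independent homology classes is a disk only when they are simple and meet once.
\item The rank step is in fact redundant. Once $\pi_1(\Omega)\to\pi_1(\T)\cong\Z^2$ is injective, the free group $\pi_1(\Omega)$ must be trivial or $\Z$.
\end{enumerate}

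\textbf{Two further remarks.} The identity $\chi(\T)=\sum_A\chi(A)+\chi(Z)$ is not automatic for open sets. It is exactly what the regular-neighborhood decomposition (or compactly supported Euler characteristic) provides, so you should justify it that way. Also drop the remark on $\chi(\overline A)$, since closures of nodal domains need not be surfaces when vertices are present.
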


\begin{proof}
	We first recall the local structure of \(Z\). Recall
	\[
	\Delta \phi+e^u\phi=0
	\qquad\text{in }\T.
	\]
Since $\phi$ satisfies $\int_{\T}e^u\phi=0$, then $\phi$ must change sign, i.e., $Z\not=\emptyset$. Near each singular point $q_j$, we have $e^u=O\bigl(|z-q_j|^{\frac{\beta_j}{2\pi}}\bigr)$, and hence $e^u$ is bounded. Therefore, $e^u\in L^\infty(\T)$.
	 From Theorem~\ref{thm:cheng-nodal},  the nodal lines are
	\(C^1\)-immersed circles. Their critical points are isolated, hence finite on
	the compact torus. Away from these critical points, \(Z\) is a union of smooth
	embedded arcs. If \(q\in Z\) is a critical point, then locally \(2m\) arcs meet
	at \(q\) for some \(m\ge2\). Thus, after declaring all critical points as
	vertices and the smooth arcs between them as edges, \(Z\) becomes a finite
	nodal graph on \(\T\), and every vertex has degree at least \(4\).

	\medskip
	\noindent
	\textbf{Step 1. \(Z\) has no vertices.}
	
	\begin{center}
		\AnnCaseVertex
	\end{center}
	
	Suppose otherwise. Let \(Z_v\) be the union of those connected components of
	\(Z\) which contain vertices, and let \(V\) and \(E\) be the number of vertices
	and edges of \(Z_v\). For \(q\in Z_v\), set
	\[
	\deg(q):=\text{the number of edges incident to }q .
	\]
	Since every vertex has degree at least \(4\), we have
	\[
	2E=\sum_{q\in Z_v}\deg(q)\ge 4V.
	\]
	Hence
	\[
	\chi(Z_v)=V-E\le -V<0.
	\]
	All remaining components of \(Z\), if any, are immersed circle components
	without critical points, hence embedded circles, and have Euler characteristic
	zero. Therefore
	\[
	\chi(Z)<0.
	\]
	
	Let \(U\) be a sufficiently small closed regular neighborhood of the finite graph \(Z\subset T\), and set
\[
S:=T\setminus \operatorname{int}U.
\]
Then \(U\) deformation retracts onto \(Z\). Moreover, the components \(S_D\) of \(S\) are naturally indexed by the nodal domains \(D\) of \(T\setminus Z\), and \(S_D\hookrightarrow D\) is a homotopy equivalence. 
Each \(S_D\) is a compact, connected, orientable surface with nonempty boundary. By Lemma \ref{Simply nodal set Lem}, \(D\) is not simply connected. Since \(S_D\simeq D\), the surface \(S_D\) is not a disk. If \(g_D\) is its genus and \(b_D\ge1\) is the number of its boundary components, then
\[
\chi(S_D)=2-2g_D-b_D\le0.
\]
On the other hand,
\[
\chi(U)=\chi(Z)<0,
\qquad
U\cap S=\partial U.
\]
Since \(U\) is a compact surface with boundary, \(\partial U\) is a compact one-dimensional manifold without boundary. Hence it is a finite disjoint union of circles, and therefore \(\chi(\partial U)=0\). Therefore,
\[
0=\chi(T)
 =\chi(U)+\chi(S)-\chi(\partial U)
 =\chi(Z)+\sum_D\chi(S_D)<0,
\]
a contradiction. Thus \(Z\) has no vertices.

	Therefore \(Z\) is a finite union of pairwise disjoint smooth simple closed
	curves:
	\[
	Z=\gamma_1\cup\cdots\cup\gamma_k .
	\]
	Each component is essential: otherwise, choosing an innermost null-homotopic component would produce a disk whose interior is a simply connected nodal domain, contradicting Lemma \ref{Simply nodal set Lem}.

	\medskip
	\noindent
	\textbf{Step 2. All components have the same free homotopy class.}
	
	\begin{center}
		\AnnCaseParallel
	\end{center}
	
	Fix one component, say \(\gamma_1\). Since \(\gamma_1\) is an essential simple
	closed curve on the torus, its homology class is a nonzero primitive element of
	\[
	H_1(\T;\mathbb Z)\simeq\mathbb Z^2 .
	\]
	After changing the basis of \(H_1(\T;\mathbb Z)\), we may assume
	\[
	[\gamma_1]=(1,0).
	\]
	Let \(\gamma_j\), \(j\ge2\), be another component of \(Z\). Since
	\(\gamma_j\cap\gamma_1=\varnothing\), their algebraic intersection number is
	zero. Write
	\[
	[\gamma_j]=(m,n)\in H_1(\T;\mathbb Z).
	\]
	Then
	\[
	[\gamma_1]\cdot[\gamma_j]
	=
	\det
	\begin{pmatrix}
		1 & 0\\
		m & n
	\end{pmatrix}
	=
	n .
	\]
	Hence \(n=0\), and therefore
	\[
	[\gamma_j]=(m,0)=m[\gamma_1].
	\]
	Since \(\gamma_j\) is also an essential simple closed curve, its homology class
	is nonzero and primitive. Thus \(m=\pm1\). Consequently,
	\[
	[\gamma_j]=\pm[\gamma_1].
	\]
	Therefore \(\gamma_j\) and \(\gamma_1\) have the same unoriented free homotopy
	class. Hence all components of \(Z\) are parallel essential simple closed
	curves.
	
	Finally, a finite collection of pairwise disjoint parallel essential simple
	closed curves cuts the torus into a disjoint union of annuli. Therefore every
	nodal domain of \(\phi\) is an annulus. Since \(\phi\) changes sign, \(\T\setminus Z\)
	cannot consist of a single nodal domain. In particular, there are at least two
	annular nodal domains.
\end{proof}

Let \(A\) be one such annular nodal domain.  Replacing \(\phi\) by \(-\phi\)
if necessary, we may assume
\[
\phi>0\quad\text{in }A,\qquad \phi=0\quad\text{on }\partial A .
\]
On \(A\), the restrictions of \(u\) and \(\phi\) satisfy
\[
\Delta\phi+e^u\phi=0
\qquad\text{in }A,
\]
and, in the sense of distributions,
\[
\Delta u+e^u
=
\sum_{q\in\{p_1,\cdots,p_{N}\}\cap A}\beta_q\delta_q
\ge0 .
\]
If one of \(p_1,\cdots,p_N\) lies on \(\partial A\), the same formulas below are
understood by first removing a small disk around that boundary singular point
and then letting its radius tend to zero.  Since the singularities are positive,
this approximation does not decrease the relevant lower bounds.

Set
\[
C_\ell=(0,\ell)\times \mathbb{S}^1=\{(t,\theta): 0\leq t\leq l, 0\leq\theta\leq 2\pi\},\qquad |\mathbb{S}^1|=2\pi,
\]
then
choose a conformal map
\[
F:C_\ell:=(0,\ell)\times \mathbb{S}^1\longrightarrow A,
\qquad \mathbb{S}^1=\mathbb R/(2\pi\mathbb Z),
\]
where \(\ell>0\) is the conformal length of the annulus.  Write
\[
F^*g_{\T}=e^{2\omega}(dt^2+d\theta^2).
\]
Equivalently, in a local complex coordinate, \(e^\omega=|F'|\), and
\(\omega\) is harmonic.  Define the functions on \(C_\ell\)
\[
U:=u\circ F+2\omega,
\qquad
\psi:=\phi\circ F .
\]
This is the important point: the cylindrical Liouville potential is not
\(u\circ F\), but \(u\circ F+2\omega\).

We now compute the transformed equations.  For any smooth function \(f\) on
\(A\),
\[
\Delta_{C_\ell}(f\circ F)
=
e^{2\omega}(\Delta_A f)\circ F .
\]
Since \(\Delta_{C_\ell}\omega=0\), we obtain, in the sense of distributions,
\[
\begin{aligned}
	\Delta_{C_\ell} U+e^U
	&=
	\Delta_{C_\ell}(u\circ F+2\omega)
	+e^{u\circ F+2\omega}  \\
	&=
	e^{2\omega}(\Delta_Au)\circ F
	+e^{2\omega}(e^u\circ F)  \\
	&=
	e^{2\omega}\bigl(\Delta_Au+e^u\bigr)\circ F .
\end{aligned}
\]
Hence
\[
\Delta_{C_\ell} U+e^U
=
\sum_{q\in\{p_1,\cdots,p_{N}\}\cap A}\beta_q\delta_{F^{-1}(q)}
\ge0
\qquad\text{on }~ \overline{C_\ell} .
\]

Similarly, since \(\psi=\phi\circ F\), we have
\[
\begin{aligned}
	\Delta_{C_\ell}\psi+e^U\psi
	&=
	e^{2\omega}(\Delta_A\phi)\circ F
	+
	e^{u\circ F+2\omega}(\phi\circ F) \\
	&=
	e^{2\omega}\bigl(\Delta_A\phi+e^u\phi\bigr)\circ F
	=
	0.
\end{aligned}
\]
Moreover,
\[
\psi>0\quad\text{in }C_\ell,
\qquad
\psi=0\quad\text{on }\partial C_\ell.
\]

The mass is also preserved under this transformation.  Indeed,
\[
e^U\,dt\,d\theta
=
e^{u\circ F}e^{2\omega}\,dt\,d\theta
=
F^*(e^u\,dA_T).
\]
Therefore
\[
\int_{C_\ell}e^U\,dt\,d\theta
=
\int_A e^u\,dA_T.
\]
For an annulus \(A\) with boundary \(\partial A= \partial A_1\cup \partial A_2\), its capacity is
\begin{equation}\label{eq:capdef}
	\begin{aligned}
		\Cap(A):=&\inf\Bigl\{
		\int_A |\nabla \eta|^2\,\ud A:\ 
		\eta\in H^1(A),
		\eta=0 \,\text{ on}\, \partial A_1,  
		\eta=1\,\text{ on}\, \partial A_2\Bigr\}\\
        =&\inf\Bigl\{
		\int_{C_\ell} |\nabla \eta|^2\,\ud t\ud \theta:\ 
		\eta\in H^1(C_\ell),
		\eta(0,\theta)=0, \eta(l,\theta) =1\Bigr\}.
	\end{aligned}
\end{equation}
The capacity of a doubly connected domain can be defined variationally through the Dirichlet energy; see, for instance, \cite{BerlyandMironescu2003,Ziemer1967} and the references therein.
This quantity is conformally invariant in two dimensions, and for the flat cylinder \(C_\ell=(0,\ell)\times \mathbb S^1\), from \cite{BerlyandMironescu2003,NasserVuorinen2021}, one has
\[
\operatorname{Cap}(C_\ell)=\frac{2\pi}{\ell}.
\]

\section{The Alexandrov–Bol type inequality on cylinder}\label{Sec 4}
The following inequality can be viewed as a Alexandrov–Bol type inequality on the cylinder. It cannot be derived from the simply connected-domain case in \(\C\), since the first eigenfunction \(\psi\) on the cylinder cannot be regarded as the first eigenfunction of a simply connected domain in \(\C\); see \cite{GuiMoradifam,LinLucia07}.

\begin{theorem}\label{thm:local}
	Let \(C_\ell=(0,\ell)\times \mathbb{S}^1\), \(|\mathbb{S}^1|=2\pi\). Assume
	\begin{equation}\label{eq:Usub}
		\Delta U+e^U= \sum_{q\in\{p_1,\cdots,p_{N}\}\cap A}\beta_q\delta_{F^{-1}(q)},
		\quad\text{ on } ~\overline{C_\ell},
	\end{equation}
where $\beta_q>0$ for all $q\in\{p_1,\cdots,p_{N}\}$	and assume that there is a function \(\psi\) such that
	\begin{equation}\label{eq:jaclocal}
		\Delta\psi+e^U\psi=0,
		\qquad
		\psi>0~\text{ in }~C_\ell,
		\qquad
		\psi=0~\text{ on }~\partial C_\ell.
	\end{equation}
	Then, set \(c=\Cap(C_\ell)=2\pi/\ell\), we have
	\begin{equation}\label{eq:phisimple}
		\int_{C_\ell}e^U> \frac{4\pi\kappa c}{2+\kappa c}.
	\end{equation}
\end{theorem}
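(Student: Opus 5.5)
The plan is to argue by contradiction through a weighted Rayleigh-quotient comparison, as in Lemma~\ref{lem:weighted-spectral-positive-singularities}, but on the cylinder. Set $\sigma:=\int_{C_\ell}e^U$. Testing \eqref{eq:jaclocal} with $\psi$ gives
\[
\int_{C_\ell}|\nabla\psi|^2=\int_{C_\ell}e^U\psi^2 .
\]
So it suffices to prove that for every $0\le v\in H^1_0(C_\ell)$, $v\not\equiv0$,
\begin{equation}\label{eq:plan-target}
\int_{C_\ell}|\nabla v|^2\;>\;\Lambda(\sigma,c)\int_{C_\ell}e^Uv^2,
\qquad\text{where }\Lambda(\sigma,c)\ge1 \text{ whenever }\ \frac1\sigma\ge\frac1{2\pi\kappa c}+\frac1{4\pi}.
\end{equation}
The threshold in \eqref{eq:phisimple} is exactly $1/\sigma=1/(2\pi\kappa c)+1/(4\pi)$. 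This suggests splitting the inverse eigenvalue bound into a \emph{Bol part}, contributing $\sigma/(4\pi)$ as in Lemma~\ref{lem:weighted-spectral-positive-singularities}, and a \emph{capacity part}, contributing $\sigma/(2\pi\kappa c)$.

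\textbf{Step 1: coarea reduction.} For a.e.\ $t$, let $\Omega_t=\{v>t\}$, $A(t)=\int_{\Omega_t}e^U$ and $L(t)=\int_{\partial\Omega_t}e^{U/2}$. By the coarea formula and Cauchy--Schwarz,
\[
\int|\nabla v|^2\ge\int_0^\infty\frac{L(t)^2}{-A'(t)}\,dt .
\]
Passing to the mass variable $s=A/\sigma\in[0,1]$ and $F(s)=v$ then reduces \eqref{eq:plan-target} to a one-dimensional inequality $\int_0^1 a(s)(F')^2\,ds\ge\Lambda\sigma\int_0^1F^2\,ds$ with $F(1)=0$, once we have an isoperimetric profile $L^2\ge\sigma^2a(s)$. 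This is the same route as \eqref{eq:model-barta}.

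\textbf{Step 2: cylindrical isoperimetric profile (the main obstacle).} I want a lower bound for $L(t)^2$ in terms of $A(t)$, $\sigma$ and $c$. I would split according to the topology of $\Omega_t$.

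If every component of $\Omega_t$ is inessential, then each component is simply connected after filling holes, which only decreases the boundary term and increases the mass. The Bol inequality \eqref{Bol Inequ} (with the remark allowing boundary singularities) then gives $L^2\ge\tfrac12A(8\pi-A)$.

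If $\Omega_t$ contains an essential annulus, its complement in $C_\ell$ contains two collars touching $\{0\}\times\mathbb S^1$ and $\{\ell\}\times\mathbb S^1$. Here I would use the conformal invariance of capacity, together with the Cauchy--Schwarz bound
\[
\Bigl(\int_{\gamma}e^{U/2}\Bigr)^2\ge\Bigl(\int_\gamma e^{U}|\nabla\eta|^{-1}\Bigr)\Bigl(\int_\gamma|\nabla\eta|\Bigr)
\]
along level curves $\gamma$ of the capacitary potential $\eta=t/\ell$. The goal is a bound of the type $L^2\gtrsim c\,A$, i.e.\ of ``$s$-type'' as in \eqref{ineq1}, which yields the $\kappa^2$ factor.

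My first attempt would be to show directly the combined profile
\[
L^2\ge\sigma^2 s\Bigl(\frac{\sigma}{2\pi c}\Bigr)^{-1}\wedge\ \tfrac12\sigma s(8\pi-\sigma s),
\]
using \eqref{ineq1} on the first piece and \eqref{ineq2} on the second. I would then combine the resulting Rayleigh bounds by a harmonic-mean (convexity) argument, writing $a(s)$ as a convex combination as was done with $s(1-\sigma s/8\pi)$. This step is where I expect the real difficulty: Bol's inequality is only available on simply connected sets, and lifting to the strip $(0,\ell)\times\mathbb R$ destroys finiteness of mass.

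\textbf{Step 3: conclusion and strictness.} With the profile in hand, the one-dimensional inequalities \eqref{ineq1}--\eqref{ineq2} give $\Lambda(\sigma,c)$ with
\[
\frac{1}{\Lambda}\le\sigma\Bigl(\frac1{2\pi\kappa c}+\frac1{4\pi}\Bigr),
\]
so $\Lambda\ge1$ at the threshold. Hence $\sigma\le 4\pi\kappa c/(2+\kappa c)$ forces $\int|\nabla\psi|^2\ge\int e^U\psi^2$, with equality only if all the inequalities are equalities. Equality in \eqref{ineq1} would require $F(s)=J_0(j_{0,1}\sqrt s)$, which is incompatible with the equality case of Bol's inequality on a doubly connected domain, so the inequality is strict. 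This yields \eqref{eq:phisimple}.
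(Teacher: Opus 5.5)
\textbf{The gap is Step 2.} That step carries the whole content of the theorem, and the profile you propose there is false for essential level sets. (The energy identity, the coarea reduction of Step 1 and the Bol bound for inessential components are fine.)

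Test Step 2 on the admissible source-free pair from the proof of Theorem~\ref{thm:small-capacity-asymptotic}:
\[
e^{U}=2a^2\operatorname{sech}^2x,\qquad \psi=1-x\tanh x,\qquad x=a(t-\ell/2),\qquad a=2T/\ell,\qquad T\tanh T=1 .
\]
For this pair $c=\pi a/T$ and $\sigma=8c$. The superlevel sets of $\psi$ are the essential bands $\{|x|<y\}$, $0<y<T$, and on them
\[
L^2=32\pi^2a^2\operatorname{sech}^2y,\qquad A=8\pi a\tanh y,\qquad \frac{L^2}{\sigma^2}=\frac{1-s^2\tanh^2T}{2\tanh^2T}.
\]
As $s\uparrow1$ one gets $L^2/(2\pi cA)\to 2(T^2-1)/\pi\approx0.28$. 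For $\ell\ge4$ one also has $L^2<\frac12A(8\pi-A)$ there. So $L^2$ lies below both branches of your minimum on a set of levels of positive measure.

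This is not a matter of constants. With $a_1=\frac{2\pi c}{\sigma}s$, inequality \eqref{ineq1} produces $\kappa^2$, not $\kappa$. Carried through your harmonic-mean step, the profile would then give $\sigma\ge 4\pi\kappa^2c/(2+\kappa^2c)\approx 2\pi\kappa^2 c\approx 9.08\,c$ for small $c$. That contradicts $\sigma=8c$ for this very pair, i.e.\ $B(c)\le 8c$.

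The $s$-type shape is wrong for essential sets. In the example, $L^2/\sigma^2$ stays bounded away from $0$ as $s\to0$, because the boundary consists of two full essential loops. It is near $s=1$, where these loops approach $\partial C_\ell$, that $L$ becomes short compared with $cA$.

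The tool you suggest does not help either. Your Cauchy--Schwarz display is reversed: writing $e^{U/2}=(e^{U/2}|\nabla\eta|^{-1/2})\,|\nabla\eta|^{1/2}$ gives
\[
\Bigl(\int_\gamma e^{U/2}\Bigr)^2\le\Bigl(\int_\gamma e^U|\nabla\eta|^{-1}\Bigr)\Bigl(\int_\gamma|\nabla\eta|\Bigr),
\]
which is an upper bound on length. Moreover, the curves $\partial\Omega_t$ are level sets of $v$, not of $\eta$. The equality-case argument for strictness in Step 3 is also unsupported.

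\textbf{The missing idea.} The paper never needs an isoperimetric inequality for non-simply-connected sets. It lifts $(U,\psi)$ to the $n$-fold cyclic cover $C_\ell^{(n)}$ and uses the angular partition $\chi_{1,n}=|\cos\frac{\theta}{2n}|$, $\chi_{2,n}=|\sin\frac{\theta}{2n}|$. Each $\chi_{i,n}\psi_n$ vanishes on a cut line, so it lies in $H^1_0$ of a simply connected rectangle carrying mass $nM$. There Lemma~\ref{lem:weighted-spectral-positive-singularities} applies when $nM\le4\pi$. Since $\sum_i\chi_{i,n}^2=1$ and $\sum_i|\nabla\chi_{i,n}|^2=1/(4n^2)$, summing and using $E_n=\int e^{U_n}\psi_n^2$ gives
\[
\Bigl(\frac{4\pi\kappa^2}{nM}+1-\kappa^2\Bigr)E_n\le E_n+\frac1{4n^2}\int_{C_\ell^{(n)}}\psi_n^2 .
\]
The localization error is absorbed by the one-dimensional Poincar\'e inequality in $t$, $\int\psi_n^2\le\frac{\ell^2}{\pi^2}E_n$; this is exactly where the capacity $c=2\pi/\ell$ enters. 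The result is $M\ge 4\pi n\kappa^2c^2/(1+n^2\kappa^2c^2)$ for every $n$, trivially when $nM>4\pi$. Choosing $n=\lceil 1/(\kappa c)\rceil$ if $\kappa c\le1$, and $n=1$ otherwise, gives the strict bound \eqref{eq:phisimple}.

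So the form $4\pi\kappa c/(2+\kappa c)$ is not a harmonic mean of a Bol term and a capacity term. The leading $2\pi\kappa c$ comes from balancing $n\approx1/(\kappa c)$, and the $+\kappa c$ in the denominator comes from rounding $n$ to an integer.
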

\begin{proof}
	Fix \(n\in\mathbb N\). Lift \(U\) and \(\psi\) to the cyclic \(n\)-cover
	\[
	C_\ell^{(n)}=(0,\ell)\times\bigl(\R/(2\pi n\Z)\bigr),
	\]
	and denote the lifts by \(U_n\) and \(\psi_n\). Put
	\[
	M:=\int_{C_\ell}e^U,
	\qquad
	E_n:=\int_{C_\ell^{(n)}}|\nabla\psi_n|^2,
	\qquad
	P_n:=\int_{C_\ell^{(n)}}\psi_n^2.
	\]
	Since \(\psi_n\) solves the Jacobi equation,
	\begin{equation}\label{eq:energyidentity}
		E_n=\int_{C_\ell^{(n)}}e^{U_n}\psi_n^2.
	\end{equation}
	
	Define the Lipschitz partition
	\[
	\chi_{1,n}(\theta)=\left|\cos\frac{\theta}{2n}\right|,
	\qquad
	\chi_{2,n}(\theta)=\left|\sin\frac{\theta}{2n}\right|.
	\]
	Then
	\begin{equation}\label{eq:partition}
		\chi_{1,n}^2+\chi_{2,n}^2=1,
		\qquad
		|\nabla\chi_{1,n}|^2+|\nabla\chi_{2,n}|^2=\frac1{4n^2}
		\quad\text{a.e.}\quad C_\ell^{(n)}.
	\end{equation}
	Without loss of generality, we can parameterize $ C_\ell^{(n)}$ as 
	\begin{align*}
		C_\ell^{(n)}=\{(t,\theta)\in \C: t\in (0,\ell) \quad\mathrm{and}\quad \theta\in (0, 2\pi n)\}
	\end{align*}
	For \(\chi_{1,n}\psi_n\), we cut the cylinder \(C_\ell^{(n)}\) along \(\theta=\pi n\); whereas for \(\chi_{2,n}\psi_n\), we cut the cylinder \(C_\ell^{(n)}\) along \(\theta=0\). Thus they may be regarded as functions in \(H_0^1\) on the corresponding simply connected cut cylinders.
	
	If \(nM\le4\pi\), Lemma \ref{lem:weighted-spectral-positive-singularities} gives
	\[
\left(\frac{4\pi\kappa^2}{nM}+1-\kappa^2\right)	\int_{C_\ell^{(n)} }  e^{U_n}\chi_{i,n}^2\psi_n^2
	\le
\int_{C_\ell^{(n)} } |\nabla(\chi_{i,n}\psi_n)|^2.
	\]
	Summing over \(i=1,2\), using \eqref{eq:partition} and \begin{align*}
		\chi_{1,n}\nabla \chi_{1,n}+\chi_{2,n}\nabla\chi_{2,n}=0,
	\end{align*}
	gives
	\begin{equation}\label{eq:imsbound}
	\left(\frac{4\pi\kappa^2}{nM}+1-\kappa^2\right)	E_n\le 
		E_n+\frac1{4n^2}P_n.
	\end{equation}
	Since \(\psi_n=0\) at \(t=0,\ell\), the standard  Poincare inequality  gives
	\begin{equation}\label{eq:poincare}
		P_n\le \frac{\ell^2}{\pi^2}E_n.
	\end{equation}
	Combining \eqref{eq:imsbound} and \eqref{eq:poincare}, and dividing by \(E_n>0\), we obtain
	\[
\frac{4\pi\kappa^2}{nM}+1-\kappa^2	\le 1+\frac{\ell^2}{4\pi^2n^2}.
	\]
	Notice that this inequality also holds if $nM>4\pi$. Hence
 \begin{equation}\label{eq:MNbound}
		M\ge 4\pi\sup_{n\in \mathbb{N}}\frac{ n}{n^2+\ell^2/(4\pi^2\kappa^2)}=4\pi \sup_{n\in \mathbb{N}}\frac{ n\kappa^2c^2 }{n^2{\kappa^2}c^2+1},
	\end{equation}
	where \(c=2\pi/\ell\) is used. Finally, if $\kappa  c\leq 1$, choose
	\(n=\lceil 1/(\kappa  c)\rceil\). Then \(1\le n \kappa  c<1+\kappa  c\), and since
	\(x/(1+x^2)\) is decreasing on \([1,\infty)\),
	\[
	\frac{n\kappa^2c^2}{1+n^2\kappa^2c^2}
	=\kappa c\frac{n\kappa c}{1+(n\kappa c)^2}
	\ge \kappa c\frac{1+\kappa c}{1+(1+\kappa c)^2}
	> \frac{\kappa c}{2+\kappa c}.
	\]
	If $\kappa c>1$, then 
	\begin{align*}
		\sup_{n\geq 1}\frac{n\kappa^2 c^2}{1+n^2\kappa^2c^2}
	\geq 	\frac{\kappa^2c^2}{1+\kappa^2c^2}>\frac{\kappa c}{2+\kappa c}.
	\end{align*}This proves \eqref{eq:phisimple}.
\end{proof}

In general, the lower bound \eqref{eq:phisimple} is not sharp. However, if the solution is one-dimensional, we can obtain a sharp version.

\begin{lemma}
\label{lem:one-dimensional-mass}
Let $v\in W^{1,\infty}_{\mathrm{loc}}(0,L)$ satisfy
$e^v\in L^1(0,L)$ and, in the distributional sense,
\[
 v''+e^v=\mu,
 \qquad \mu\geq0,
\]
where $\mu$ is a nonnegative  Radon measure on $(0,L)$,
not necessarily of finite total mass. Suppose that there exists
$p\in H^1_0(0,L)$ satisfying $p''+e^vp=0$ and
$p>0$ in $(0,L)$. Then
\[
 L\int_0^L e^v\,dx\geq8.
\]
If $\mu$ is finite, equality holds if and only if $\mu=0$ and
\begin{align}\label{eq:One dim case}
 e^{v(x)}
 =2a^2\operatorname{sech}^2\!\left(a(x-L/2)\right),
 \qquad a=\frac{2T}{L},
\end{align}
where $T>0$ is the unique solution of
$T\tanh T=1$.
\end{lemma}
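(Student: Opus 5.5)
We argue via the variational characterization of the Jacobi field, combined with a one-dimensional analogue of the mass-coordinate rearrangement used in Lemma \ref{lem:weighted-spectral-positive-singularities}. Since $p>0$ solves $p''+e^vp=0$ with $p\in H^1_0(0,L)$, the first Dirichlet eigenvalue of $-\frac{d^2}{dx^2}-e^v$ on $(0,L)$ equals $0$. Hence $\int_0^L (q')^2\ge\int_0^L e^vq^2$ for all $q\in H^1_0(0,L)$, with equality at $q=p$. The goal is to show that if $M:=\int_0^Le^v<8/L$, then this eigenvalue is strictly positive. The naive bound $p(x)^2\le \frac{x(L-x)}{L}\int (p')^2$ only gives $ML\ge4$, so the sign condition $\mu\ge0$ has to be used essentially.

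\textbf{Step 1: one-dimensional Bol inequality.} Write $v''=\mu-e^v\ge -e^v$. On a subinterval $(a,b)$, set $m=\int_a^b e^v$. Integrating $(v')'\ge -e^v$ and using the first integral $\frac12(v')^2+e^v$ along the flow (whose derivative $v'\mu$ has a controlled sign on each monotone piece of $v$), I aim to prove
\[
e^{v(a)/2}+e^{v(b)/2}\ \ge\ \Phi(m),
\]
where $\Phi$ is the value attained by the model $e^{v_0}=2a^2\operatorname{sech}^2(a x)$ on a symmetric interval of the same mass. This is the exact analogue of \eqref{Bol Inequ}. I expect the main difficulty here: the first integral is only monotone on intervals where $v'$ has a fixed sign, and $\mu$ may have infinite mass near the endpoints. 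I would first treat $\mu\in L^1_{\rm loc}$ smooth, splitting at the (possibly non-unique) maximizing set of $v$, and then pass to general Radon $\mu$ by mollification, using $W^{1,\infty}_{\rm loc}$ bounds.

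\textbf{Step 2: rearrangement.} With Step 1 in hand, I rearrange $p$ by the superlevel sets $\{p>t\}$, measured by $e^vdx$, onto symmetric intervals for the model density $e^{v_0}$ with the same total mass $M$ and length $L'$. The coarea formula and Step 1 give $\int(p^*{}')^2\le\int(p')^2$ and $\int e^{v_0}(p^*)^2=\int e^v p^2$. This reduces the problem to $\mu=0$ and symmetric $v=v_0$ (with $L'\le L$, since the model profile concentrates the mass best).

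\textbf{Step 3: the explicit model.} For $e^{v_0}=2a^2\operatorname{sech}^2(a(x-L/2))$, we have $M=4a\tanh(aL/2)$. The even Jacobi field is $\partial_a v_0$ up to normalization, namely $1-a(x-L/2)\tanh(a(x-L/2))$. This field vanishes exactly at $a|x-L/2|=T$ with $T\tanh T=1$. The borderline (zero first eigenvalue) is therefore $aL/2=T$, giving
\[
LM=\frac{2T}{a}\cdot4a\tanh T=8T\tanh T=8.
\]
For smaller $aL$ the first eigenvalue is positive, which yields $LM\ge8$ in general.

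\textbf{Step 4: equality case.} For finite $\mu$, equality forces equality in Step 1 at almost every level. This forces $\mu=0$, and then $v$ is a translate of the model. The Dirichlet conditions at $0$ and $L$ then fix the center at $L/2$ and $a=2T/L$, which is \eqref{eq:One dim case}.
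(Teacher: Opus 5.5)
There is a genuine gap in Step 1, and everything else rests on it. The one-dimensional Liouville equation has no conformal invariance, so there is no Bol inequality that depends on the mass alone. The source-free solutions $e^{v}=2a^2\operatorname{sech}^2(a(x-x_0))$ form a two-parameter family, so ``the model on a symmetric interval of the same mass'' is not determined by $m$. Concretely, take $e^{v}=2a^2\operatorname{sech}^2(ax)$ on $(-\ell,\ell)$. Then $m=4a\tanh(a\ell)$ and
\[
\frac{e^{v(-\ell)/2}+e^{v(\ell)/2}}{m}=\frac{1}{\sqrt2\,\sinh(a\ell)} .
\]
The scaling $v(x)\mapsto v(\lambda x)+2\log\lambda$ multiplies both the mass and the boundary term by $\lambda$. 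Hence, at fixed $m$, the boundary term takes every value in $(0,\infty)$, for instance tending to $0$ as $\ell\to\infty$ with $a\to m/4$. So any admissible $\Phi(m)$ is $\le 0$, and Step 1 gives nothing.

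The same mechanism breaks Step 2. In the mass coordinate $t=Q^{-1}\int_0^x e^v$ with $r=e^v/Q^2$, the Dirichlet energy becomes $Q\int_0^1 r\,(p_t)^2\,dt$ and the length becomes $L=Q^{-1}\int_0^1 dt/r$. Thus the quantity you want to bound, $LM=\int_0^1dt/r$, is a functional of the weight $r$ itself, not of the level-set geometry of $p$. No rearrangement of $p$ alone can control it. Your claim ``$L'\le L$ since the model profile concentrates the mass best'' is precisely the content of the lemma.

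There are two further problems downstream. Step 3 only treats the centred model, although source-free solutions include off-centre profiles. In Step 4 it is the equality $LM=8$, not the Dirichlet condition on $p$, that forces the centre to $L/2$.

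What works instead is to attack the extremal problem directly in the mass coordinate. Minimize $R(r)=\int_0^1dt/r$ over
\[
r=a+bt-\tfrac{t^2}{2}+\int_0^1(t-s)_+\,d\nu(s),
\]
with $0\le\nu\le\nu_0$, $r>0$, and the first Dirichlet eigenvalue of $-(rf')'$ equal to $1$. The paper proceeds as follows.
\begin{enumerate}
\item It obtains a minimizer from uniform bounds supplied by the eigenvalue constraint together with $R\le C_0$.
\item It computes the first variation of $R$ along constraint-preserving perturbations, namely $h=t$ and $h=-\int(t-s)_+\,d\nu_*$.
\item It uses the sign structure of $F=\eta(p')^2-r_*^{-2}$ to show $K(s)=\int_s^1(t-s)F\,dt>0$, which forces $\nu_*=0$.
\item For quadratic $r_*$, the explicit Jacobi fields $r_*'$ and $2+Hr_*'$ give $R(r_*)=2/(b_*(1-b_*))\ge 8$. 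This covers off-centre profiles, with equality only at $b_*=1/2$.
\end{enumerate}
The infinite-mass case then follows by truncating to $(\delta,L-\delta)$ and shifting $v$ by $\log\lambda_\delta$, where $\lambda_\delta\to1$.
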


\begin{proof}
We first assume that $\mu((0,L))<\infty$.
Since $v''=\mu-e^v\,dx$ is a finite signed measure,
$v'$ has bounded variation and $v$ extends to a
Lipschitz function on $[0,L]$.
Set $Q=\int_0^L e^v\,dx$ and introduce
\[
 t=t(x)=\frac1Q\int_0^x e^{v(s)}\,ds,
 \qquad
 r(t)=\frac{e^{v(x(t))}}{Q^2},
 \qquad 0\leq t\leq1.
\]
We use the same notation for the transformed Jacobi field.
Since $r_t=v_x/Q$ and $r_{tt}=v_{xx}/e^v$, the equations and the quantity
to be estimated become
\[
 r''+1=\nu_0,
 \qquad
 -(rp')'=p,
 \qquad
 \int_0^1\frac{dt}{r(t)}=LQ,
\]
where $'=d/dt$, $\nu_0:=Q^{-1}t_{\#}\mu$ is the normalized
push-forward of $\mu$ under $x\mapsto t(x)$ and $p(t)=p(x(t))$.
Thus $\nu_0$ is a finite nonnegative measure on $(0,1)$,
and $p(0)=p(1)=0$.
In particular, $r$ is positive on $[0,1]$, and the
positivity of $p$ implies $\lambda_1(r)=1$ (since only the first eigenfunction does not change sign).
Here $\lambda_1(r)$ denotes the first Dirichlet
eigenvalue of $-(r\,\cdot\,')'$, that is,
\begin{align}\label{eq:eigenvalue}
 \lambda_1(r)
 =\inf_{0\neq f\in H_0^1(0,1)}
 \frac{\int_0^1r(f')^2}{\int_0^1f^2}.
\end{align}
Thus it suffices to prove that
\begin{align}\label{Rr}
 R(r):=\int_0^1\frac{\ud t}{r(t)}\geq8.
\end{align}

\textbf{Step 1: the minimizing problem.}
Fix $\nu_0$ and consider
\begin{align}\label{eq minimizing}
 \min_{r\in\mathcal M}R(r),
\end{align}
where
\begin{align*}
 \mathcal M:=&\left\{
 r:r(t)=a+bt-\frac{t^2}{2}
 +\int_{0}^{1}(t-s)_+\,\ud\nu(s),
 \quad a,b\in\mathbb R,\right.\\
 &\qquad\qquad\left.
 0\leq\nu\leq\nu_0,\quad
 r(t)>0\text{ for }t\in[0,1],\quad
 \lambda_1(r)=1
 \right\}.
\end{align*}
Clearly, $\mathcal M\neq\emptyset$, since it contains
the standard model \eqref{eq:One dim case}, with $\nu=0$.
Choose a minimizing sequence with $R(r_n)\leq C_0$.
We first justify that the minimum is attained.
It is enough to establish, on this sublevel set,
\begin{align}\label{eq uniform bound}
 |a|,\ |b|,\ \|r\|_{C^{0,1}([0,1])}\leq C
 \qquad\mathrm{and}\qquad r(t)\geq\delta_0>0,
\end{align}
where the constants may depend on $C_0$ and $\nu_0$.

We use the elementary inequalities
\begin{align}\label{eq:Funda ineq}
 \int_0^1f^2\leq\int_0^1t|f'|^2,
 \qquad
 \int_0^1f^2\leq\int_0^1(1-t)|f'|^2,
 \qquad f\in H_0^1(0,1).
\end{align}
Since $a=r(0)>0$, we have $r(t)\geq(b-\tfrac12)t$. Next, we claim that $b\leq 3/2$.
If $b>1/2$, the \eqref{eq:eigenvalue} and the first inequality in \eqref{eq:Funda ineq} give
\begin{align*}
 1=\lambda_1(r)
 \geq\left(b-\frac12\right)
 \inf_{0\neq f\in H_0^1(0,1)}
 \frac{\int_0^1t(f')^2}{\int_0^1f^2}
 \geq b-\frac12.
\end{align*}
Thus $b\leq3/2$, which is immediate also when $b\leq1/2$.
Moreover, $r'(t)=b-t+\nu((0,t))$ almost everywhere,
so $r'(t)\leq b+\nu_0((0,1))$ and
\begin{align*}
 r(t)=r(1)-\int_t^1r'(s)\,\ud s
 \geq\bigl(-b-\nu_0((0,1))\bigr)(1-t).
\end{align*}
If $b+\nu_0((0,1))<0$, multiplying $|p'|^2$ on two sides, we can see
\begin{align*}
    \int_0^1p^2= \int_0^1r|p'|^2\geq -(b+\nu_0((0,1)))\int_0^1(1-t)|p'|^2,
\end{align*}
 the second inequality in \eqref{eq:Funda ineq}
gives 
$1\geq-b-\nu_0((0,1))$; otherwise if $b+\nu_0(0,1)\geq 0$, then $b\geq-\nu_0(0,1) $. Consequently,
\[
 -1-\nu_0((0,1))\leq b\leq\frac32.
\]
Hence $r'$ is uniformly bounded.
Moreover, the ordinary Poincar\'e inequality gives
\begin{align*}
 1=\lambda_1(r)
 \geq\min_{t\in[0,1]}r(t)
 \inf_{0\neq f\in H_0^1(0,1)}
 \frac{\int_0^1(f')^2}{\int_0^1f^2}
 =\pi^2\min_{t\in[0,1]}r(t).
\end{align*}
It follows that
\begin{align*}
 a=r(0)
 \leq\min_{t\in[0,1]}r(t)+\|r'\|_{L^\infty(0,1)}
 \leq C.
\end{align*}

Finally, the bound $R(r)\leq C_0$ keeps $r$ uniformly
away from zero. Otherwise, after passing to a subsequence,
$m_n:=r_n(t_n)=\min_{[0,1]}r_n\to0$.
If $t_n\geq1/2$, the uniform Lipschitz bound gives
\begin{align*}
 \int_0^1\frac{\ud t}{r_n(t)}
 \geq\int_0^{t_n}\frac{\ud t}{m_n+C(t_n-t)}
 =\frac1C\log\frac{m_n+Ct_n}{m_n}
 \longrightarrow+\infty.
\end{align*}
If $t_n<1/2$, the same argument applies on $[t_n,1]$.
Both cases contradict $R(r_n)\leq C_0$.
This proves \eqref{eq uniform bound}.

Extend the measures by zero to $[0,1]$.
Since $0\leq\nu_n\leq\nu_0$, after passing to a subsequence,
$\nu_n\rightharpoonup^*\nu_*$ with $0\leq\nu_*\leq\nu_0$,
and the affine coefficients also converge.
The integral representation and the uniform Lipschitz bound
then give $r_n\to r_*$ uniformly on $[0,1]$, with
$r_*\geq\delta_0$.
The Rayleigh characterization yields
$\lambda_1(r_n)\to\lambda_1(r)$, while
$R(r_n)\to R(r_*)$.
Thus $r_*\in\mathcal M$ attains \eqref{eq minimizing}.

\medskip
\textbf{Step 2: Rule out the positive measure.}
We claim that $\nu_*=0$ at a minimizer
\begin{align*}
    r_*=a_*+b_*t-\frac{t^2}{2}
 +\int_{0}^{1}(t-s)_+\,\ud\nu_*(s).
\end{align*}
To see this, we first record a convenient formula for
variations preserving the constraint $\lambda_1(r_*)=1$.
Let $p>0$ be the first eigenfunction of
$$
 -(r_*p')'=p,\qquad p(0)=p(1)=0,
$$
normalized by $\int_0^1p^2 dt=1$.
For a fixed continuous variation $h$, there exists $\alpha_n(\varepsilon)$ such that
$$
 r_\varepsilon=r_*+\varepsilon h+\alpha_h(\varepsilon),
 \qquad \alpha_h(0)=0,
$$
and $\lambda_1(r_\varepsilon)=1$. This follows by considering the function
\begin{align*}
G(\varepsilon,\alpha)=\lambda_1(r_*+\varepsilon h+\alpha)-1
\end{align*}
Since $G(0,0)=0$ and \begin{align*}
\partial_{\alpha}G(0,0)=D\lambda_1(r_*)[1]=\int_0^1(p')^2\,dt>0,
\end{align*}
such a function $\alpha_h(\varepsilon)$ exists by the implicit
function theorem. Here, we note that for an arbitrary $h$, $r_{\varepsilon}$ need not belong to $\mathcal{M}$. Differentiating the constraint gives

$$
 \alpha_h'(0)
 =-\frac{\displaystyle\int_0^1(p')^2h\,dt}
         {\displaystyle\int_0^1(p')^2\,dt}.
$$

Consequently,
\begin{align}\label{eq: variation formula-1}
\left.\frac{d}{d\varepsilon}R(r_\varepsilon)
\right|_{\varepsilon=0}
=\int_0^1F(t)h(t) dt,
\end{align}
where
\begin{align}\label{eq:variation formula-2}
\eta:=
\frac{\displaystyle\int_0^1r_*^{-2}dt}
{\displaystyle\int_0^1(p')^2dt},
\qquad F:=\eta(p')^2-r_*^{-2},\qquad \int_0^1F dt=0
\end{align}

Next, we choose two special admissible perturbations to prove that $\nu_*=0$ for the minimizer $r_*$.
\begin{itemize}
\item The first admissible perturbation: Taking
$h(t)=t$, we have
\begin{align*}
r_{\varepsilon}=(a_*+\alpha_h(\varepsilon))+(b_*+\varepsilon)t-\frac{t^2}{2}
+\int_{0}^{1}(t-s)_+\ud\nu_*(s),
\end{align*}
Then, for $\varepsilon$ sufficiently small, by \eqref{eq uniform bound}, we obtain $r_{\varepsilon}\in \mathcal{M}$. Moreover, from \eqref{eq: variation formula-1}, we have
\begin{align}\label{variation formula-3}
\int_0^1tF(t)dt=0.
\end{align}
\item  The second admissible perturbation: Taking
$
h(t)=-\int_{0}^{1}(t-s)_+\ud\nu_*(s),
$
we have
\begin{align*}
r_{\varepsilon}=(a_*+\alpha_h(\varepsilon))+b_*t-\frac{t^2}{2}
+(1-\varepsilon)\int_{0}^{1}(t-s)_+\ud\nu_*(s),
\end{align*}
For small $\varepsilon>0$, this replaces $\nu_*$ by
$(1-\varepsilon)\nu_*$. Using \eqref{eq uniform bound}, we obtain $r_{\varepsilon}\in \mathcal{M}$. Similarly, we also have
\begin{align}\label{eq K}
\left.\frac{d}{d\varepsilon}R(r_\varepsilon)
\right|_{\varepsilon=0^+}
=-\int_0^1K(s)\ud\nu_*(s)\geq0,
\quad
K(s):=\int_s^1(t-s)F(t) dt.
\end{align}
\end{itemize}
However, we claim that \eqref{eq:variation formula-2} and \eqref{variation formula-3} force
$K(s)>0$ for every $s\in(0,1)$,
which contradicts \eqref{eq K} if $\nu_*\not=0$.

Put $g=r_*p'$. Since $g'=-p<0$, the function $g$ is
strictly decreasing, with $g(0)>0$ and $g(1)<0$.
Since
$$
 F=\frac{\eta g^2-1}{r_*^2},
$$
the identities
\begin{equation}\label{eq:variation formula-20}
 \int_0^1F\,dt=0,
 \qquad
 \int_0^1tF\,dt=0
\end{equation}
imply that $F$ must change sign. In fact, $F$ has at most one negative region, so only three cases can occur.

\textbf{Case 1:} There exists $s_0\in (0,1)$ such that
$F>0$ on $(0,s_0)$ and $F<0$ on $(s_0,1)$. Using \eqref{eq:variation formula-20}, one has
$$
 0=\int_0^1tF(t)\,dt
 =\int_0^1(t-s_0)F(t)\,dt<0,
$$
a contradiction.

\textbf{Case 2:} There exists $s_0\in (0,1)$ such that
$F<0$ on $(0,s_0)$ and $F>0$ on $(s_0,1)$. Then
$$
 0=\int_0^1tF(t)\,dt
 =\int_0^1(t-s_0)F(t)\,dt>0,
$$
a contradiction.

\textbf{Case 3:} There exists $0<s_1<s_2<1$ such that
$F>0$ on $(0,s_1)\cup (s_2,1)$ and $F<0$ on $(s_1,s_2) $ .
Equation \eqref{eq:variation formula-20} also gives
\begin{align}\label{eq K2}
K(s)=\int_0^s(s-t)F(t) dt=\int_{s}^1(t-s)F(t) dt.
\end{align}
Thus $K>0$ on $(0,s_1)$ by the first identity in
\eqref{eq K2}, and $K>0$ on $(s_2,1)$ by the second identity
in \eqref{eq K2}. Finally, on $(s_1,s_2)$,
$K''=F<0$, so $K$ remains positive by concavity.
Therefore
\[
 K(s)>0\qquad(0<s<1).
\] Thus $\nu_*=0$.

\medskip
\textbf{Step 3: the conclusion for finite measures.}
Write $r_*(t)=a_*+b_*t-t^2/2$ and put
$H(t)=\int_0^t r_*^{-1}\,ds$, so that $H(1)=R(r_*)$.
A direct differentiation shows that $r_*'$ and $2+Hr_*'$
are solutions of $-(r_*f')'=f$. Their Wronskian is
$(2a_*+b_*^2)/r_*>0$, so the solution space is
\begin{align*}
 \mathrm{span}\{r_*',2+Hr_*'\}.
\end{align*}
Thus there exist constants $A,B$, not both zero, such that
\begin{align*}
 p=Ar_*'+B(2+Hr_*').
\end{align*}
Since $r_*'(0)=b_*$, $r_*'(1)=b_*-1$, and $p(0)=p(1)=0$,
we have
\[
 0=\det\begin{pmatrix}
 b_* & 2\\
 b_*-1 & 2+(b_*-1)R(r_*)
 \end{pmatrix}
 =2-b_*(1-b_*)R(r_*).
\]
As $R(r_*)>0$, necessarily $0<b_*<1$. Therefore
\[
 R(r_*)=\frac{2}{b_*(1-b_*)}\geq8,
\]
with equality precisely when $b_*=1/2$.

If equality holds for $r$, then
\begin{align*}
    8=R(r)\geq \min_{r\in\mathcal M}R(r)=R(r_*)=8,
\end{align*}
then $r$ must be a minimizer of $\min_{r\in\mathcal M}R(r)$, 
Step~2 gives $\nu_0=0$,
and hence $\mu=0$, while Step~3 gives $b=1/2$ and so $r(t)=a+t/2-t^2/2$.
Since $r_t=v_x/Q$, we obtain $v'(0)=-v'(L)$.
The source-free equation consequently gives
$e^{v(x)}=2a^2\operatorname{sech}^2(a(x-L/2))$
for some $a>0$.
Its mass is $Q=4a\tanh(aL/2)$, so $LQ=8$ is
equivalent to $s\tanh s=1$, where $s=aL/2$.
Thus $a=2T/L$.
Conversely, this profile, together with
$p(x)=1-a(x-L/2)\tanh(a(x-L/2))$, satisfies the
hypotheses and attains equality.

\medskip
\textbf{Step 4: locally finite measures.}
\medskip
We now allow $\mu((0,L))=+\infty$. Fix $0<\delta<L/2$ and set
$I_\delta=(\delta,L-\delta)$. Since $\mu$ is a Radon measure, we have
$\mu(I_\delta)<\infty$. Define
\[
\lambda_\delta
:=
\inf_{0\neq f\in H_0^1(I_\delta)}
\frac{\displaystyle\int_{I_\delta}|f'|^2\,dx}
{\displaystyle\int_{I_\delta}e^vf^2\,dx}.
\]

We first claim that $\lambda_\delta\geq1$ and
$\lambda_\delta\to1$ as $\delta\downarrow0$. Indeed, since $p>0$
satisfies $p''+e^vp=0$ in $(0,L)$, the ground-state identity gives
\[
\int_{I_\delta}\left(|f'|^2-e^vf^2\right)\,dx
=
\int_{I_\delta}p^2
\left|\left(\frac{f}{p}\right)'\right|^2\,dx
\geq0
\]
for every $f\in C_c^\infty(I_\delta)$. By density, the same inequality
holds for every $f\in H_0^1(I_\delta)$, and hence
$\lambda_\delta\geq1$.

To prove the convergence, choose $p_k\in C_c^\infty(0,L)$ such that
$p_k\to p$ in $H_0^1(0,L)$. Since
$H_0^1(0,L)\hookrightarrow C^0([0,L])$, we also have
$p_k\to p$ uniformly. Moreover,
$\int_0^L|p'|^2\,dx=\int_0^Le^vp^2\,dx$, and therefore
\[
\frac{\displaystyle\int_0^L|p_k'|^2\,dx}
{\displaystyle\int_0^Le^vp_k^2\,dx}
\longrightarrow1.
\]
For each fixed $k$, $\operatorname{supp}p_k\subset I_\delta$ for
$\delta>0$ sufficiently small. Thus
\[
1\leq\lambda_\delta
\leq
\frac{\displaystyle\int_0^L|p_k'|^2\,dx}
{\displaystyle\int_0^Le^vp_k^2\,dx},
\]
which yields $\lambda_\delta\to1$. Now set $v_\delta=v+\log\lambda_\delta$ in $I_\delta$. Then
\[
v_\delta''+e^{v_\delta}
=
\mu|_{I_\delta}
+
(\lambda_\delta-1)e^v
=: \mu_\delta\geq0.
\]
Since $\mu(I_\delta)<\infty$ and $e^v\in L^1(0,L)$, the measure
$\mu_\delta$ is finite. By the definition of $\lambda_\delta$, there
exists a positive first eigenfunction $p_\delta\in H_0^1(I_\delta)$
satisfying
\[
p_\delta''+e^{v_\delta}p_\delta=p_\delta''+\lambda_{\delta}e^{v}p_\delta=0
\qquad\text{in }I_\delta.
\]
Hence the finite-measure case applies to $(v_\delta,p_\delta)$ and gives
\[
(L-2\delta)\lambda_\delta
\int_\delta^{L-\delta}e^v\,dx\geq8.
\]
Letting $\delta\downarrow0$, using $\lambda_\delta\to1$ and
$\int_\delta^{L-\delta}e^v\,dx\to\int_0^Le^v\,dx$, we conclude that
\[
L\int_0^Le^v\,dx\geq8.
\]
\end{proof}

Based on the sharp one-dimensional result, we can use a blow-up analysis to improve the lower bound \eqref{eq:phisimple} to an asymptotically sharp one as $c\to 0$. Given $c=\Cap(C_\ell)>0$, we define the best constant
\begin{align}\label{Def B}
B(c)=\inf_{U\in \mathcal{X}_c,~l=\frac{2\pi}{c}}\int_{C_\ell  }e^U,
\end{align}
where

$$
\mathcal{X}_c
=
\left\{
U:
\begin{aligned}
& \Delta U+e^U=\sum_{j=1}^{m}\beta_{q_j}\delta_{q_j}
 \quad\text{in } \overline{C_\ell},\quad \beta_j\geq 0,l=\frac{2\pi}{c}\\
& \Delta\psi+e^U\psi=0,
\quad \psi>0 \text{ in } C_\ell,
\quad \psi=0 \text{ on } \partial C_\ell
\end{aligned}
\right\}.
$$

	\begin{theorem}\label{thm:small-capacity-asymptotic}
			Let $B(c)$ be defined by \eqref{Def B}. Then 
			\[
			B(c)=8c+o(c)\qquad\hbox{as }c\downarrow0.
			\]
		\end{theorem}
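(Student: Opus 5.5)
We prove two inequalities: $\limsup_{c\downarrow0}B(c)/c\le 8$ and $\liminf_{c\downarrow0}B(c)/c\ge 8$.

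\textbf{Upper bound.} Use the one-dimensional extremal of Lemma~\ref{lem:one-dimensional-mass}. With $\ell=2\pi/c$, set
\[
e^{U(t,\theta)}=2a^2\operatorname{sech}^2\bigl(a(t-\ell/2)\bigr),\qquad a=2T/\ell ,
\]
and
\[
\psi(t,\theta)=1-a(t-\ell/2)\tanh\bigl(a(t-\ell/2)\bigr).
\]
Both are independent of $\theta$. Then $\Delta U+e^U=0$, and $\psi$ is a positive Jacobi field vanishing at $t=0,\ell$, so $U\in\mathcal X_c$ with all $\beta_j=0$. Its mass is
\[
\int_{C_\ell}e^U=2\pi\int_0^\ell e^{U}\,dt=2\pi\cdot\frac{8}{\ell}=8c .
\]
Hence $B(c)\le 8c$ for every $c$.

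\textbf{Lower bound.} Argue by contradiction. Suppose there are $c_k\downarrow0$ and $U_k\in\mathcal X_{c_k}$ with Jacobi fields $\psi_k$ such that $M_k:=\int e^{U_k}\le(8-\varepsilon_0)c_k$. Rescale $t=\ell_k s$ with $s\in(0,1)$ and keep $\theta$. The rescaled potential is $V_k=U_k+2\log\ell_k$, which solves
\[
\ell_k^{-2}\partial_\theta^2V_k+\partial_s^2V_k+e^{V_k}=\text{(nonnegative measure)},\qquad \int_{(0,1)\times\mathbb S^1}e^{V_k}\,ds\,d\theta=\ell_kM_k\le 2\pi(8-\varepsilon_0).
\]
The natural reduction is to average in $\theta$. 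Put $\bar v(s)=\frac1{2\pi}\int V_k\,d\theta$ and pair it with the ground state. By Jensen, the averaged function $w=\log\bigl(\frac1{2\pi}\int e^{V_k}d\theta\bigr)$ satisfies $w\ge\bar v$, but it does not satisfy a clean ODE. So instead of averaging the equation, I would average the Rayleigh quotient.

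Let $\lambda_k$ be the first Dirichlet eigenvalue of $-\Delta$ with weight $e^{V_k}$ on the rescaled cylinder; positivity of $\psi_k$ forces $\lambda_k=1$. Test with $\theta$-independent functions $f(s)$. This gives
\[
\int_0^1 |f'|^2\,ds\ \ge\ \int_0^1 m_k(s)f^2\,ds,\qquad m_k(s):=\frac1{2\pi}\int e^{V_k}\,d\theta .
\]
So the one-dimensional weight $m_k$ has first Dirichlet eigenvalue at least $1$, with total mass at most $8-\varepsilon_0$. This inequality alone does not give a contradiction: a weight concentrated at a point can have small mass and still satisfy it. What the argument really needs is that, as $k\to\infty$, $m_k$ is approximately of the form $e^{v}$ with $v''+e^v\ge0$, so that Lemma~\ref{lem:one-dimensional-mass} yields $\int m\ge 8$ (with $L=1$).

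This is where blow-up analysis enters. The cylinder in the rescaled variables has circumference $2\pi$ but the $\theta$-direction carries the factor $\ell_k^{-2}\to0$. Equivalently, in the original variables the cylinder is very long and the mass is $O(1/\ell_k)$, hence tiny. Small total mass in each unit square of the original cylinder, below $4\pi$, rules out concentration by Brezis--Merle. Harnack estimates on $\theta$-circles then give
\[
\operatorname{osc}_\theta U_k\to0 \quad\text{uniformly away from singular sources.}
\]
The sources have positive weights, which only help, and they can be handled as in the remarks following Lemma~\ref{lem:weighted-Bol Ineq}. Consequently $V_k=\bar v_k+o(1)$, and $\bar v_k$ satisfies $\bar v_k''+e^{\bar v_k}(1+o(1))\ge0$ in the distributional sense. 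Passing to the limit, or applying Lemma~\ref{lem:one-dimensional-mass} with a $(1+o(1))$ rescaling as in its Step 4, gives
\[
\liminf_k \ell_kM_k/(2\pi)\ge 8,
\]
which contradicts the assumption $M_k\le(8-\varepsilon_0)c_k$.

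\textbf{Main obstacle.} The hard step is making $\operatorname{osc}_\theta U_k\to0$ uniform, and controlling the Jacobi field $\psi_k$ so that the limiting one-dimensional field is a positive $H_0^1$ solution. For $\psi_k$, I would decompose it into Fourier modes in $\theta$. Nonzero modes have energy at least $\ell_k^2\int\psi^2$ in rescaled form, so they are suppressed. Hence the normalized $\psi_k$ converges to a positive one-dimensional profile $p$ with $p''+e^{v}p=0$. That is exactly the hypothesis needed for Lemma~\ref{lem:one-dimensional-mass}.
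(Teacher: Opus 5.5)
Your upper bound is correct and is the same as the paper's. The lower bound, however, has a genuine gap at exactly the step you call the main obstacle, and the tools you propose do not close it.

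\textbf{The rescaling.} A setup slip first. With $t=\ell_k s$ the rescaled operator is $\partial_s^2+\ell_k^2\partial_\theta^2$, not $\partial_s^2+\ell_k^{-2}\partial_\theta^2$. It is the large factor in the $\theta$-direction that forces one-dimensionality, and your later Fourier remark implicitly uses it.

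\textbf{The oscillation claim.} The claim $\operatorname{osc}_\theta U_k\to0$ ``uniformly away from singular sources'' via Brezis--Merle and Harnack is unjustified, and as stated it is not available. The class $\mathcal X_c$ allows arbitrarily many sources, with arbitrary positive weights, at arbitrary positions, and $U_k\to-\infty$ at each of them. No boundary condition is imposed on $U_k$. Consequently:
\begin{itemize}
\item there is no uniform region ``away from the sources'';
\item there is no a priori upper bound from which a Harnack argument could start;
\item small mass in unit squares only gives information at the unit scale. It does not control the nonzero $\theta$-modes, or concentration (e.g.\ boundary layers), at the macroscopic scale $\ell_k$.
\end{itemize}
Above all, your argument never uses the positive Jacobi field to get compactness of $U_k$, and that is the only hypothesis that pins $U_k$ down.

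\textbf{The Jacobi field.} The same problem affects the rescaled field $\widetilde\psi_k$. Write $W_k=e^{V_k}$, $\nabla_k=(\partial_s,\ell_k\partial_\theta)$ and $E_k(f)=\int|\nabla_kf|^2$. Suppressing the nonzero Fourier modes of $\widetilde\psi_k$ presupposes a uniform bound on $E_k(\widetilde\psi_k)=\int W_k\widetilde\psi_k^2$ for $L^2$-normalized $\widetilde\psi_k$. That bound does not follow from the $L^1$ bound $\int W_k\le16\pi$ alone.

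\textbf{The missing idea: get all compactness from stability.}
\begin{itemize}
\item Since $\widetilde\psi_k>0$, one has $\int W_kf^2\le E_k(f)$ for every $f\in H^1_0$.
\item Test this with $f=\eta(s)e^{V_k/2}$. Combine it with the identity obtained by multiplying the Liouville equation by $\eta^2W_k$. The sources contribute nothing, because $W_k$ vanishes at them to positive order.
\item This gives $\int\eta^2\bigl(W_k^2+\tfrac12W_k|\nabla_kV_k|^2\bigr)\le2\int W_k|\eta'|^2$, with constants independent of the singular data.
\item Hence $\|d\,W_k\|_{L^2}\le C$ with $d(s)=\min\{s,1-s\}$, and $\sqrt{W_k}$ is bounded in $H^1_{\mathrm{loc}}$, including the component $\ell_k\partial_\theta\sqrt{W_k}$. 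So the limit is $\theta$-independent in an averaged sense. That is all that is needed; no pointwise oscillation control is required.
\item A Hardy-type interpolation inequality near $s=0,1$ then gives $E_k(\widetilde\psi_k)\le C$ and a nontrivial limit $p\in H^1_0$.
\end{itemize}

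\textbf{The limiting equation.} Rather than averaging $V_k$ (where, as you noticed, Jensen goes the wrong way), the paper passes to the limit in $\Delta_kW_k=4|\nabla_k\sqrt{W_k}|^2-W_k^2$ by weak lower semicontinuity. This gives $m''\ge4|(\sqrt m)'|^2-m^2$ for the limit $m$ of $W_k$. The paper then proves $m>0$ on $(0,1)$ and applies Lemma~\ref{lem:one-dimensional-mass} to the pair $(\log m,p)$. Its potential is the same $m$ that appears in the limiting Jacobi equation $p''+mp=0$.
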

		
		\begin{proof}
			Let $T>0$ satisfy $T\tanh T=1$, and put
			$a=2T/\ell$ and $x=a(t-\ell/2)$.
			The source-free pair
			\[
			U(t)=\log(2a^2)-2\log\cosh x,
			\qquad
			\psi(t)=1-x\tanh x
			\]
			is admissible and satisfies
			\[
			\int_{C_\ell}e^U
			=8\pi a\tanh T
			=\frac{16\pi}{\ell}
			=8c.
			\]
			Thus $B(c)\leq8c$.
			
			To prove the reverse asymptotic, suppose that there exist
			$\delta>0$, $\ell_k\to\infty$, and admissible pairs
			$(U_k,\psi_k)$ such that
			\begin{equation}\label{eq:mass-deficit}
				\ell_k\int_{C_{\ell_k}}e^{U_k}\leq16\pi-\delta.
			\end{equation}
			For each $k$, after omitting the zero coefficients and
			relabeling the singular points if necessary, write
			\[
			\Delta U_k+e^{U_k}
			=
			\sum_{j=1}^{m_k}\beta_{j,k}\delta_{q_{j,k}},
			\qquad
			\beta_{j,k}>0,\qquad	\Delta \psi_k+e^{U_k}\psi_k=0
			\]
			where
			$q_{j,k}=(t_{j,k},\theta_{j,k})\in C_{\ell_k}$.
			In the following, we will make a blow up analysis and cause a 
			contradiction with  Lemma~\ref{lem:one-dimensional-mass}.
			
		{\bf Step 1: Rescaling and local estimate.}
			On $\Omega=(0,1)\times\mathbb S^1$, set
			\[
			\widetilde U_k(s,\theta)
			=U_k(\ell_ks,\theta)+2\log\ell_k,
			\qquad
			W_k=e^{\widetilde U_k},
			\]
			and write
			$\nabla_k=(\partial_s,\ell_k\partial_\theta)$,
			$\Delta_k=\partial_s^2+\ell_k^2\partial_\theta^2$.
			Let the  rescaled Jacobi fields $\widetilde{\psi}_k(s,\theta)=\psi_k(\ell_ks,\theta)$,
			normalized by $\|\widetilde{\psi}_k\|_{L^2(\Omega)}=1$.
			Then
			\begin{equation}\label{eq:wkin}
			    			\int_\Omega W_k
			=\ell_k\int_{C_{\ell_k}}e^{U_k}\leq16\pi-\delta,
			\end{equation}
			and
			\[
			\Delta_k\widetilde{\psi}_k+W_k\widetilde{\psi}_k=0
			\quad\text{in }\Omega,
			\qquad
			\widetilde{\psi}_k>0,\quad \widetilde{\psi}_k\in H^1_0(\Omega).
			\]
			Put $E_k(f)=\int_\Omega|\nabla_kf|^2$.
			Since $W_k>0$, only the first eigenfunction of $-\Delta_k-W_k$ does not change sign. Hence, $\lambda_1(-\Delta_k-W_k)=0$, which means that
			\begin{equation}\label{eq:rescaled-stability}
				\int_\Omega W_kf^2\leq E_k(f)
				\quad(f\in H^1_0(\Omega)),
				\qquad
				E_k(\widetilde{\psi}_k)=\int_\Omega W_k\widetilde{\psi}_k^2.
			\end{equation}
			The rescaled Liouville equation is
			\[
			\Delta_k\widetilde U_k+W_k
			=
			\ell_k\sum_{j=1}^{m_k}
			\beta_{j,k}\delta_{\widetilde q_{j,k}},
			\qquad
			\beta_{j,k}>0,
			\]
			where
			\[
			\widetilde q_{j,k}
			=
			\left(\frac{t_{j,k}}{\ell_k},\theta_{j,k}\right).
			\]
			Take $\eta=\eta(s)\in C_c^\infty(0,1)$, then we claim that $\eta e^{\widetilde U_k/2}\in H^1_0(\Omega)$. In fact, we only need to check the integrability near $\widetilde q_{j,k}$. 
For each fixed $k$, the usual logarithmic expansion near
			$q_{j,k}$ and the rescaling give
			\[
			\widetilde U_k
			=
			\frac{\beta_{j,k}}{2\pi}\log r+O_{j,k}(1),
			\]
			and hence
			\begin{align}\label{eq: asy est}
			W_k=O_{j,k}
			\left(r^{\beta_{j,k}/(2\pi)}\right),
			\qquad
			|\nabla\widetilde U_k|=O_{j,k}(r^{-1}).
			\end{align}
            Hence, near each singularity $\widetilde q_{j,k}$, $\eta e^{\widetilde U_k/2}\in H^1$. 
			Applying \eqref{eq:rescaled-stability} to
			$f=\eta e^{\widetilde U_k/2}$ gives
			\begin{align}\label{eq inte ineq}
			\int_\Omega\eta^2W_k^2
			\leq
			\int_\Omega W_k|\eta'|^2
			+\int_\Omega\eta\eta'W_k\widetilde U_{k,s}
			+\frac14\int_\Omega
			\eta^2W_k|\nabla_k\widetilde U_k|^2.
			\end{align}
			
			Fix $k$. For $\varepsilon>0$ sufficiently small, the disks
			$B_\varepsilon(\widetilde q_{j,k})$ are mutually disjoint
			and contained in $\Omega$. Set
			\[
			\Omega_{\varepsilon,k}
			=
			\Omega\setminus\bigcup_{j=1}^{m_k}
			\overline{B_\varepsilon(\widetilde q_{j,k})}.
			\]
			On $\Omega_{\varepsilon,k}$,
			$\Delta_k\widetilde U_k+W_k=0$. Hence, multiplying by
			$\eta^2W_k$ and integrating by parts, we obtain
			\[
			\begin{aligned}
				\int_{\Omega_{\varepsilon,k}}\eta^2W_k^2
				={}&
				2\int_{\Omega_{\varepsilon,k}}
				\eta\eta'W_k\widetilde U_{k,s}
				+\int_{\Omega_{\varepsilon,k}}
				\eta^2W_k|\nabla_k\widetilde U_k|^2\\
				&-
				\int_{\partial\Omega_{\varepsilon,k}}
				\eta^2W_k\partial_{\nu_k}\widetilde U_k\,\ud\sigma ,
			\end{aligned}
			\]
			where, if $\nu=(\nu_s,\nu_\theta)$ denotes the outward unit
			normal,
			\[
			\partial_{\nu_k}\widetilde U_k
			:=
			\nu_s\widetilde U_{k,s}
			+\ell_k^2\nu_\theta\widetilde U_{k,\theta}.
			\]
			then from \eqref{eq: asy est}, there holds $|\partial_{\nu_k}\widetilde U_k|=O_{j,k}(r^{-1})$.
			Therefore the boundary integral over
			$\partial B_\varepsilon(\widetilde q_{j,k})$ is
			$O_{j,k}(\varepsilon^{\beta_{j,k}/(2\pi)})$.
			Since $m_k$ is finite and every $\beta_{j,k}>0$, the sum
			of the boundary terms tends to zero as
			$\varepsilon\to0$ with $k$ fixed. Letting
			$\varepsilon\to0$ gives
			\begin{align}\label{eq inte indend}
			\int_\Omega\eta^2W_k^2
			=
			2\int_\Omega\eta\eta'W_k\widetilde U_{k,s}
			+\int_\Omega
			\eta^2W_k|\nabla_k\widetilde U_k|^2.
			\end{align}
		Using \eqref{eq inte ineq} and \eqref{eq inte indend}, and canceling the terms containing $\eta\eta'W_k\widetilde U_{k,s}$, we obtain
			\begin{equation}\label{eq:sqrt-caccioppoli}
				\int_\Omega\eta^2
				\left(
				W_k^2+\frac12W_k|\nabla_k\widetilde U_k|^2
				\right)
				\leq
				2\int_\Omega W_k|\eta'|^2.
			\end{equation}
			Let $d(s)=\min\{s,1-s\}$.
			Approximating $d$ by compactly supported cutoffs in
			\eqref{eq:sqrt-caccioppoli}, and also using cutoffs
			equal to one on compact subintervals, it follows from \eqref{eq:wkin} that
			\begin{equation}\label{eq:weighted-potential-bound}
				\|dW_k\|_{L^2(\Omega)}\leq C,
				\qquad
				\int_{K\times\mathbb S^1}
				\left(
				|\nabla_k\sqrt{W_k}|^2+W_k^2
				\right)\leq C_K
				\quad(K\Subset(0,1)).
			\end{equation}
			Here and below, the constants $C$ and $C_K$ are independent
			of $k$ and of the singularity data.
			
			{\bf Step 2: Uniform bound for $E_k(\widetilde{\psi}_k)$.}
			We use the following global interpolation estimate:
			\begin{equation}\label{eq:weighted-interpolation}
				\left\|\frac{f^2}{d}\right\|_{L^2(\Omega)}
				\leq
				C\|f\|_{L^2(\Omega)}^{1/2}E_k(f)^{3/4}
				+C\ell_k^{-1/2}E_k(f),
				\qquad f\in H^1_0(\Omega).
			\end{equation}
			To verify it, first take $f$ smooth and put
			$X(s)=\|f(s,\cdot)\|_{L^2(\mathbb S^1)}$ and
			$Y(s)=\|f_\theta(s,\cdot)\|_{L^2(\mathbb S^1)}$.
			Since $X\in H^1_0(0,1)$ and
			$|X'|\leq\|f_s(s,\cdot)\|_{L^2(\mathbb S^1)}$,
			Hardy's inequality and the fundamental theorem of
			calculus give
			\begin{align}\label{eq X ineq}
			&\int_0^1\frac{X^2}{d^2}\,ds
			\leq C\int_0^1(X'(s))^2\ud s
			\leq CE_k(f),\\
			&X(s)^2
			\leq d(s)\int_0^1(X'(s))^2\ud s
			\leq d(s)E_k(f),\label{eq X ineq0}
			\end{align}
			and
			\begin{align}\label{eq X ineq-1}
				\|X\|_\infty^2\leq C\|X\|_{L^2}\|X'\|_{L^2}
				\leq C\|f\|_{L^2(\Omega)}E_k(f)^{1/2}.
			\end{align}
			The  Gagliardo-Nirenberg inequality on $\mathbb S^1$ gives
            \begin{align*}
                \|f(s,\cdot)\|^2_{L^{\infty}(\mathbb S^1)} \leq C\left(\|f(s,\cdot)\|^2_{L^{2}(\mathbb S^1)}+\|f(s,\cdot)\|_{L^{2}(\mathbb S^1)}\|f_{\theta}(s,\cdot)\|_{L^{2}(\mathbb S^1)}  \right),
            \end{align*}
            then
            \begin{align}\label{eq: f ineq}
               \int_{\mathbb S^1}|f|^4\leq \|f(s,\cdot)\|^2_{L^{\infty}(\mathbb S^1)} \|f(s,\cdot)\|^2_{L^{2}(\mathbb S^1)}\leq C(X^4+X^3Y). 
            \end{align}
			Consequently, using \eqref{eq: f ineq} and \eqref{eq X ineq}-\eqref{eq X ineq0},
			\[
			\begin{aligned}
				\int_\Omega\frac{|f|^4}{d^2}&=\int_0^1\frac{1}{d^2(s)}ds\int_{\mathbb S^1}|f|^4\\
                &\leq
				C\|X\|_\infty^2\int_0^1\frac{X^2}{d^2}\,ds
				+CE_k(f)\int_0^1\frac{XY}{d}\,ds\\
				&\leq
				C\|f\|_{L^2(\Omega)}E_k(f)^{3/2}
				+CE_k(f)\left(\int_0^1\frac{X^2}{d^2}\,ds \right)^{\frac{1}{2}} \left(\int_0^1Y^2\,ds \right)^{\frac{1}{2}} \\
                &\leq
				C\|f\|_{L^2(\Omega)}E_k(f)^{3/2}
				+C\ell_k^{-1}E_k(f)^2.
			\end{aligned}
			\]
			Taking square roots and using density proves
			\eqref{eq:weighted-interpolation}.
			
			We now apply this estimate only to $\widetilde{\psi}_k$.
			By \eqref{eq:rescaled-stability},
			\eqref{eq:weighted-potential-bound}, and the normalization $\|\widetilde{\psi}_k\|_{L^2(\Omega)}=1$,
			\[
			\begin{aligned}
				E_k(\widetilde{\psi}_k)
				&=\int_\Omega W_k\widetilde{\psi}_k^2
				\leq
				\|dW_k\|_{L^2(\Omega)}
				\left\|\frac{\widetilde{\psi}_k^2}{d}\right\|_{L^2(\Omega)}\\
				&\leq
				C E_k(\widetilde{\psi}_k)^{3/4}
				+C\ell_k^{-1/2}E_k(\widetilde{\psi}_k).
			\end{aligned}
			\]
			For sufficiently large $k$, the last term can be
			absorbed into the left-hand side. Hence
			\begin{equation}\label{eq:jacobi-energy-bound}
				\int_{\Omega}|\partial_s\widetilde{\psi}_k|^2+\ell_k^2|\partial_\theta\widetilde{\psi}_k|^2=E_k(\widetilde{\psi}_k)\leq C.
			\end{equation}
			
			{\bf Step 3: The one-dimensional limit.}
			By \eqref{eq:weighted-potential-bound},
			$\sqrt{W_k}$ is bounded in $H^1_{\mathrm{loc}}(\Omega)$.
			Together with \eqref{eq:jacobi-energy-bound}, compactness
			on the fixed cylinder therefore gives, after passing
			to a subsequence,
			\[
			\begin{gathered}
				\widetilde{\psi}_k\rightharpoonup p(s)
				\quad\text{in }H^1_0(\Omega),
				\qquad
				\widetilde{\psi}_k\to p(s)
				\quad\text{in }L^2(\Omega),\\
			\sqrt{W_k}\rightharpoonup\sqrt m,
				\quad\text{in }H^1_{\mathrm{loc}}(\Omega)\qquad	\sqrt{W_k}\to\sqrt m
				\quad\text{in }L^4_{\mathrm{loc}}(\Omega),
			\end{gathered}
			\]
			and hence
			\[
			W_k\to m(s)
			\quad\text{in }L^2_{\mathrm{loc}}(\Omega),
			\qquad
			\sqrt m\in H^1_{\mathrm{loc}}(0,1).
			\]
			Both limits are independent of $\theta$, since
			$\partial_\theta\widetilde{\psi}_k\to0$ in $L^2(\Omega)$ and
			$\partial_\theta\sqrt{W_k}\to0$ in
			$L^2_{\mathrm{loc}}(\Omega)$.
			
			Passing to the Jacobi equation against test functions
			depending only on $s$, we obtain
			\[
			p''+mp=0,
			\qquad
			p\in H^1_0(0,1),\quad p\geq0,\quad p\not\equiv0.
			\]
			Here $p\not\equiv0$ follows from the normalization and
			strong $L^2$ convergence.
			Since $m\geq0$, the function $p$ is concave and hence
			positive in $(0,1)$.
			In particular, $m\not\equiv0$; moreover,
			$m\in L^1(0,1)$ by Fatou's lemma.
			
			It remains to identify the equation satisfied by the limiting
			potential $m$. Using \eqref{eq: asy est} and applying the similar arguments for \eqref{eq inte indend}, we can get
			\[
			\Delta_kW_k
			=
			4|\nabla_k\sqrt{W_k}|^2-W_k^2
			\qquad\text{in }\Omega
			\]
			in the sense of distributions. Denote circular averaging by
			\[
			\langle F\rangle
			:=
			\frac1{2\pi}\int_{\mathbb S^1}F\,d\theta .
			\]
			Integrating over $\mathbb S^1$ gives
			\[
			\langle W_k\rangle''
			=
			4\left\langle|\nabla_k\sqrt{W_k}|^2\right\rangle
			-\langle W_k^2\rangle .
			\]
			Let $\zeta\in C_c^\infty(0,1)$ be nonnegative. Then
			\[
			\int_0^1\langle W_k\rangle\zeta''
			=
			4\int_0^1
			\left\langle|\nabla_k\sqrt{W_k}|^2\right\rangle\zeta
			-
			\int_0^1\langle W_k^2\rangle\zeta .
			\]
			Since $W_k\to m$ strongly in $L^2_{\rm loc}$, 
			$\sqrt{W_k}\rightharpoonup\sqrt m$ weakly in
			$H^1_{\rm loc}$ and $\int_0^1\left\langle|\nabla_k\sqrt{W_k}|^2\right\rangle\zeta\geq \int_0^1 \left\langle|\partial_s\sqrt{W_k}|^2\right\rangle\zeta $, weak lower semicontinuity gives
			\[
			\int_0^1m\zeta''
			\geq
			4\int_0^1|(\sqrt m)'|^2\zeta
			-
			\int_0^1m^2\zeta .
			\]
			Hence
			\begin{align}\label{eq:limiting-potential-inequality}
				m''\geq4|(\sqrt m)'|^2-m^2
			\end{align}
			in the sense of distributions.
			
			Since $\sqrt m\in H^1_{\rm loc}(0,1)$, the one-dimensional
			Sobolev embedding implies that $m$ is continuous and locally
			bounded. In particular, on every $K\Subset(0,1)$,
			\eqref{eq:limiting-potential-inequality} gives
			$m''\geq-m^2\geq-C_K$.
			Thus $m$ is locally semiconvex and hence locally Lipschitz,
			i.e.,
			$m\in W^{1,\infty}_{\mathrm{loc}}(0,1)$.
			On each component of $\{m>0\}$,
			\eqref{eq:limiting-potential-inequality} gives
			\[
			(\log m)''+m\geq0.
			\]
			Let $J$ be a connected component of $\{m>0\}$. We claim that $J$ cannot have an endpoint in $(0,1)$. Suppose, for contradiction, that $a\in(0,1)$ is an endpoint of $J$. Since $m$ is continuous and $m>0$ in $J$, we have $m(a)=0$. As $m$ is locally bounded, there exist $\varepsilon>0$ and $C>0$ such that $m\leq C$ on $J\cap(a-\varepsilon,a+\varepsilon)$. Since $(\log m)''+m\geq0$ in $J$, we obtain $(\log m)''\geq-C$ there. Hence
\[
h(s):=\log m(s)+\frac{C}{2}s^2
\]
is convex on $J\cap(a-\varepsilon,a+\varepsilon)$. Fix $s_0$ in this interval. By convexity, there exists $q\in\mathbb R$ such that
$h(s)\geq h(s_0)+q(s-s_0)$ for $s$ sufficiently close to $a$ within $J$. Thus $\log m$ is bounded from below near $a$. On the other hand, by the continuity of $m$ and $m(a)=0$, we have $\log m(s)\to-\infty$ as $s\to a$ within $J$, a contradiction. Therefore no connected component of $\{m>0\}$ has an endpoint in $(0,1)$. Since $m\not\equiv0$, it follows that
\[
m>0\qquad\text{in }(0,1).
\]
			
			Thus $v=\log m$ belongs to
			$W^{1,\infty}_{\mathrm{loc}}(0,1)$ and satisfies
			$v''+e^v\geq0$ in the sense of distributions, while
			$p''+e^vp=0$ with $p>0$ and $p\in H^1_0(0,1)$.
			Using the nonnegative-measure extension of
			Lemma~\ref{lem:one-dimensional-mass}, we obtain
			\[
			\int_0^1m\,ds\geq8.
			\]
			On the other hand, lower semi-continuity
			and \eqref{eq:mass-deficit} give
			\[
			8
			\leq\int_0^1m\,ds
			\leq\frac1{2\pi}\liminf_{k\to\infty}\int_\Omega W_k
			\leq8-\frac{\delta}{2\pi},
			\]
			a contradiction.
			Together with $B(c)\leq8c$, this proves
			$B(c)=8c+o(c)$ as $c\downarrow0$.
		\end{proof}
		
		\section{Proof of uniqueness}\label{Sec 5}
		The main aim of this section is to prove Theorem \ref{thm:main} and Theorem \ref{thm:intro-asymptotic-uniqueness}. The only remaining step is to establish the relationship between the capacity $c$ and the quantity $\lambda_1(\T)|\T|$.

\begin{lemma}
	\label{lem:capacity-sum-analytic}
	Let \(\mathbb T=\mathbb C/\Lambda\) be a flat two-dimensional torus, and let
	\(\Gamma_0,\ldots,\Gamma_{m-1}\subset\mathbb T\) be pairwise disjoint
	parallel essential simple closed curves such that
	\[
	\mathbb T\setminus\bigcup_{j=0}^{m-1}\Gamma_j
	=
	\bigsqcup_{j=1}^{m}A_j,
	\]
	where \(A_j\) is the annulus bounded by \(\Gamma_{j-1}\) and \(\Gamma_j\),
	with the indices understood modulo \(m\). Set
	\(c_j:=\operatorname{Cap}(A_j)\). Then
	\begin{equation}
		\label{eq:reciprocal-capacity-sum}
		\sum_{j=1}^{m}\frac{1}{c_j}
		\leq
		\frac{4\pi^2}{\lambda_1(\mathbb T)|\mathbb T|}.
	\end{equation}
	In particular,
	\begin{equation}
		\label{eq:capacity-sum-analytic}
		\sum_{j=1}^{m}c_j
		\geq
		\frac{m^2}{4\pi^2}
		\lambda_1(\mathbb T)|\mathbb T|.
	\end{equation}
\end{lemma}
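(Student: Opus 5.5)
The plan is to bound each $1/c_j$ from above by a length–area (extremal length) argument in the flat metric, sum over $j$, and then compare with $\lambda_1(\T)$ using the dual lattice.

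\textbf{Capacity as a modulus.} By conformal invariance, $A_j$ is conformally equivalent to $C_{\ell_j}$ with $c_j=2\pi/\ell_j$. I use the dual characterization of this capacity: $c_j$ is the extremal length of the family $\mathcal F_j$ of closed curves in $A_j$ separating its two boundary components. On $C_\ell$, these are the curves winding once around, of length $\ge2\pi$, and $(2\pi)^2/(2\pi\ell)=2\pi/\ell$. The needed inequality is
\[
c_j\;\ge\;\frac{\bigl(\inf_{\gamma\in\mathcal F_j}\int_\gamma \varrho\,|dz|\bigr)^2}{\int_{A_j}\varrho^2\,dA}
\]
for any admissible Borel density $\varrho\ge 0$. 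It follows directly from \eqref{eq:capdef}. Pull $\varrho$ back to $C_{\ell_j}$ and integrate along the circles $\{t\}\times\mathbb S^1$, which gives $2\pi\inf L_\varrho\le\int_{C_{\ell_j}}\varrho$. Then Cauchy--Schwarz gives $(2\pi)^2 (\inf L_\varrho)^2\le 2\pi\ell_j\int\varrho^2$, which is the displayed bound.

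\textbf{Choice of $\varrho$.} I take $\varrho\equiv1$, the flat metric of $\T$. By Lemma~\ref{lem:annular-alt} all $\Gamma_j$ lie in one primitive class, represented by a primitive lattice vector $\omega\in\Lambda$. Any closed curve in $A_j$ separating the boundary components is freely homotopic to $\Gamma_j$ in $\T$. Hence its lift to $\C$ joins some $z$ to $z\pm\omega$, and its Euclidean length is at least $|\omega|$. Therefore $c_j\ge |\omega|^2/|A_j|$. Summing the reciprocals over $j$ gives
\[
\sum_{j=1}^m\frac1{c_j}\le\frac{\sum_j|A_j|}{|\omega|^2}=\frac{|\T|}{|\omega|^2},
\]
since the curves $\Gamma_j$ have measure zero.

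\textbf{Spectral comparison.} It remains to show $\lambda_1(\T)\le 4\pi^2|\omega|^2/|\T|^2$. Since $\omega$ is primitive, complete it to a basis $\{\omega,\omega'\}$ of $\Lambda$. The dual basis vector $\xi\in\Lambda^*$ determined by $\xi\cdot\omega=0$ and $\xi\cdot\omega'=2\pi$ is orthogonal to $\omega$. Its length is $2\pi$ divided by the distance from $\omega'$ to the line $\R\omega$, which equals $2\pi|\omega|/|\T|$. By \eqref{Intro lambda_1}, $\lambda_1(\T)\le|\xi|^2=4\pi^2|\omega|^2/|\T|^2$. Combining this with the previous bound yields \eqref{eq:reciprocal-capacity-sum}. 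Finally, Cauchy--Schwarz, $m^2\le\bigl(\sum c_j\bigr)\bigl(\sum 1/c_j\bigr)$, gives \eqref{eq:capacity-sum-analytic}.

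\textbf{Main obstacle.} The delicate step is the first one: rigorously identifying the capacity \eqref{eq:capdef} with the length–area lower bound for arbitrary $C^1$ nodal annuli. I would handle it via the conformal map $F$ to $C_{\ell_j}$, which extends continuously to the boundary in the relevant sense. Using $\varrho=|F'|$ in cylinder coordinates, the bound reduces to the elementary computation above. The inequality for the flat metric then follows from the change of variables $\int_{A_j}dA=\int_{C_{\ell_j}}|F'|^2$.
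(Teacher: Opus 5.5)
Your argument is correct after one small fix, and it takes a different route from the paper.

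\textbf{How the two routes differ.} The paper never estimates the $c_j$ individually. It glues the weighted capacitary potentials $a_jh_j$, with $a_j=c_j^{-1}/R$, into a function $\widetilde H\in H^1_{\mathrm{loc}}(\R^2)$ that is periodic under $\gamma_v$ and increases by $1$ under $\gamma_h$. Its energy on a fundamental cell is $\sum_j a_j^2c_j=1/R$. Completing the square in lattice coordinates shows that every such quasi-periodic function has energy at least $|\gamma_v|^2/|\T|$. This is the Dirichlet-principle (series law) form of the estimate, and it stays inside the variational definition \eqref{eq:capdef}.

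You use the dual length--area form, one annulus at a time, to get $c_j\ge|\omega|^2/|A_j|$, and then sum using additivity of area. This is a slightly stronger per-annulus statement with a transparent equality case (straight flat strips). It also avoids matching traces of the $h_j$ across the interfaces $\Gamma_j$. The price is that you use the uniformizations $F$ and the identity $c_j=2\pi/\ell_j$. The spectral step is the same in both proofs: your dual vector $\xi$ is exactly the frequency of the paper's test function $\cos(2\pi t)$ in lattice coordinates.

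\textbf{The slip.} It is in the length--area computation. Integrating $\int_{\mathbb S^1}\varrho(t,\theta)\,d\theta\ge\inf L_\varrho$ over $t\in(0,\ell_j)$ gives $\ell_j\inf L_\varrho\le\int_{C_{\ell_j}}\varrho$, not $2\pi\inf L_\varrho\le\int_{C_{\ell_j}}\varrho$. You swapped the roles of $2\pi$ and $\ell_j$; your version is the computation for curves crossing $A_j$. As written, your two steps combine to $(\inf L_\varrho)^2/\int\varrho^2\le\ell_j/(2\pi)=1/c_j$, which is false for $\varrho\equiv1$ when $\ell_j<2\pi$. With $\ell_j$ in place of $2\pi$, Cauchy--Schwarz gives $\ell_j^2(\inf L_\varrho)^2\le2\pi\ell_j\int\varrho^2$, i.e.\ $(\inf L_\varrho)^2/\int\varrho^2\le2\pi/\ell_j=c_j$, which is what you need.

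\textbf{The obstacle you flag is not real.} You only need the images $F(\{t\}\times\mathbb S^1)$ of interior circles. These are smooth closed curves in $A_j$, freely homotopic in $\T$ to $\Gamma_j$, so $\int_{\mathbb S^1}|F'|\,d\theta\ge|\omega|$ for each $t$. Integrating in $t$ and using $\int_{C_{\ell_j}}|F'|^2=|A_j|$ gives the bound. No boundary behaviour of $F$ is involved, and you do not need the claim about arbitrary separating curves.
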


\begin{proof}
	Since the curves \(\Gamma_j\) are parallel essential simple closed curves,
	after choosing a common orientation their common homotopy class is
	represented by a primitive lattice vector
	\(\gamma_{v}\in\Lambda\).
	Choose \(\gamma_{h}\in\Lambda\) such that
	\(\Lambda=\mathbb Z\gamma_{v}\oplus
	\mathbb Z\gamma_{h}\), and set
	\[
	J:=\left|\det(\gamma_{v},\gamma_{h})\right|
	=|\mathbb T|.
	\]

	For each \(j\), let \(h_j\) be the capacitary potential of \(A_j\),
	normalized by \(h_j=0\) on \(\Gamma_{j-1}\) and \(h_j=1\) on
	\(\Gamma_j\). Thus
	\[
	\int_{A_j}|\nabla h_j|^2\,dA=c_j.
	\]
	Put
	\[
	R:=\sum_{j=1}^{m}\frac1{c_j},
	\qquad
	a_j:=\frac{c_j^{-1}}{R},
	\]
	so that \(a_j>0\) and \(\sum_{j=1}^{m}a_j=1\).

	Let \(\pi:\mathbb R^2\to\mathbb T\) be the universal covering map.
	Choose an ordered sequence of connected lifts
	\(\{\widetilde\Gamma_k\}_{k\in\mathbb Z}\) such that
	\[
	\widetilde\Gamma_{k+m}
	=
	\widetilde\Gamma_k+\gamma_{h},
	\qquad
	\widetilde\Gamma_k+\gamma_{v}
	=
	\widetilde\Gamma_k.
	\]
	Let \(\widetilde A_k\) be the strip bounded by
	\(\widetilde\Gamma_{k-1}\) and \(\widetilde\Gamma_k\), and let
	\(\widetilde h_k\) denote the lift of the corresponding \(h_j\), with
	\(\widetilde h_k=0\) on \(\widetilde\Gamma_{k-1}\) and
	\(\widetilde h_k=1\) on \(\widetilde\Gamma_k\). Extend \(a_j\)
	periodically by \(a_{k+m}=a_k\).

	Let \(s_0=0\) and define \(s_k-s_{k-1}=a_k\). Since
	\(\sum_{j=1}^{m}a_j=1\), we have \(s_{k+m}=s_k+1\). Define
	\[
	\widetilde H(x)
	=
	s_{k-1}+a_k\widetilde h_k(x),
	\qquad x\in\widetilde A_k.
	\]
	The traces agree on every interface \(\widetilde\Gamma_k\), hence
	\(\widetilde H\in H^1_{\mathrm{loc}}(\mathbb R^2)\). Moreover,
	\begin{equation}
		\label{eq:Htilde-weighted-quasiperiodic}
		\widetilde H(x+\gamma_{v})=\widetilde H(x),
		\qquad
		\widetilde H(x+\gamma_{h})=\widetilde H(x)+1.
	\end{equation}
	Since \(\nabla\widetilde H=a_j\nabla h_j\) on the lift of \(A_j\),
	for any fundamental parallelogram \(P\) of \(\Lambda\),
	\begin{equation}
		\label{eq:weighted-height-energy}
		\int_P|\nabla\widetilde H|^2\,dx
		=
		\sum_{j=1}^{m}a_j^2c_j
		=
		\frac1R.
	\end{equation}

	We next estimate this energy from below. Introduce lattice coordinates
	\(F(s,t)=s\gamma_{v}+t\gamma_{h}\) and set
	\(H(s,t):=\widetilde H(F(s,t))\). By
	\eqref{eq:Htilde-weighted-quasiperiodic},
	\[H(s+1,t)=H(s,t),\qquad H(s,t+1)=H(s,t)+1.\]
	Put \(A:=|\gamma_{v}|^2\),
	\(B:=|\gamma_{h}|^2\), and
	\(C:=\gamma_{v}\cdot\gamma_{h}\). Since
	\(AB-C^2=J^2\), we have
	\begin{align*}
		\int_P|\nabla\widetilde H|^2\,dx
		&=
		\frac1J\int_0^1\int_0^1
		\left(BH_s^2-2CH_sH_t+AH_t^2\right)\,ds\,dt\\
		&=
		\frac1J\int_0^1\int_0^1
		\left[
		\frac{J^2}{A}H_s^2
		+
		A\left(H_t-\frac CAH_s\right)^2
		\right]\,ds\,dt\\
		&\geq
		\frac{A}{J}
		\left[
		\int_0^1\int_0^1
		\left(H_t-\frac CAH_s\right)\,ds\,dt
		\right]^2
		=
		\frac{A}{J},
	\end{align*}
	where the last equality follows from the quasi-periodicity of \(H\).
	Combining this with \eqref{eq:weighted-height-energy} yields
	\begin{equation}
		\label{eq:reciprocal-capacity-homotopy}
		R
		\leq
		\frac{|\mathbb T|}{|\gamma_{v}|^2}.
	\end{equation}

	It remains to estimate the right-hand side by \(\lambda_1(\mathbb T)\).
	Consider the well-defined zero-average function
	\(\varphi(F(s,t))=\cos(2\pi t)\) on \(\mathbb T\). Since
	\(\varphi_s=0\) and \(\varphi_t=-2\pi\sin(2\pi t)\), the Rayleigh
	characterization gives
	\[
	\lambda_1(\mathbb T)
	\leq
	\frac{\displaystyle\int_{\mathbb T}|\nabla\varphi|^2\,dA}
	{\displaystyle\int_{\mathbb T}\varphi^2\,dA}
	=
	\frac{4\pi^2|\gamma_{v}|^2}{|\mathbb T|^2}.
	\]
	Hence
	\[
	\frac{|\mathbb T|}{|\gamma_{v}|^2}
	\leq
	\frac{4\pi^2}{\lambda_1(\mathbb T)|\mathbb T|}.
	\]
	Together with \eqref{eq:reciprocal-capacity-homotopy}, this proves
	\eqref{eq:reciprocal-capacity-sum}.

	Finally, by the Cauchy--Schwarz inequality,
	\[
	m^2
	\leq
	\left(\sum_{j=1}^{m}c_j\right)
	\left(\sum_{j=1}^{m}\frac1{c_j}\right).
	\]
	Using \eqref{eq:reciprocal-capacity-sum}, we obtain
	\[
	\sum_{j=1}^{m}c_j
	\geq
	\frac{m^2}{4\pi^2}
	\lambda_1(\mathbb T)|\mathbb T|,
	\]
	which proves \eqref{eq:capacity-sum-analytic}.
\end{proof}

\begin{proof}[\bf Proof of Theorem \ref{thm:main}]
 Let \(u\) be the corresponding solution and \(\phi\not\equiv0\) the Jacobi field:
\[
   \Delta\phi+e^u\phi=0.
\]
The orthogonality relation at first degeneracy implies \(\int_\T e^u\phi=0\), hence \(\phi\) changes sign. The linearized sphere-covering inequality excludes simply connected nodal domains below total mass \(4\pi\). Therefore the nodal domains are parallel annuli
\[
   A_1,\ldots,A_m,
   \qquad m\ge2.
\]
On each \(A_j\), the function \(|\phi|\) is positive, vanishes on \(\partial A_j\), and satisfies
\[
   \Delta |\phi|+e^u|\phi|=0.
\]
Also \(\Delta u+e^u\ge0\) on each \(A_j\) in the sense of distributions.

Put
\[
   M_j:=\int_{A_j}e^u,
   \qquad
   c_j:=\Cap(A_j).
\]
By the conformal invariance of the equation, mass and capacity, Theorem \ref{thm:local} gives
\[
   M_j> \frac{4\pi \kappa c_j}{2+ \kappa c_j}.
\]
Since \(N\rho=\int_\T e^u=\sum_jM_j\) and Jensen inequality,
\[
   N\rho> 4\pi\sum_{j=1}^m\frac{1}{\frac{2}{\kappa c_j}+1}\geq \frac{4\pi m}{\frac{1}{m}\sum_{j=1}^m \frac{2}{\kappa c_j}+1}\geq \frac{4\pi m}{\frac{8\pi^2}{m\kappa\lambda_1(\T)|\T|}+1}\geq \frac{8\pi \kappa\lambda_1(\T)|\T|}{4\pi^2+\kappa\lambda_1(\T)|\T| }.
\]
Therefore, if 
\begin{align}\label{eqrhor}
    \rho\leq \frac{1}{N}\min\left\{4\pi,\frac{8\pi \kappa\lambda_1(\T)|\T|}{4\pi^2+\kappa\lambda_1(\T)|\T| } \right\},
\end{align}
then every solution in this parameter range is nondegenerate.
To conclude uniqueness, fix the singular points $p_1,\cdots, p_N$ and the singular masses $\beta_1,\cdots,\beta_N$ such that $\rho=\frac{1}{N}\sum\beta_j$ satisfies \eqref{eqrhor}, and consider
\[
\Delta u_t+e^{u_t}
=t\sum_{j=1}^N\beta_j\delta_{p_j},
\qquad 0<t\le1.
\]
Since $N\rho\leq 4\pi<8\pi$, the compactness
result of \cite{BT} excludes blow-up on
every compact subinterval of $(0,1]$.
Together with nondegeneracy and the implicit function theorem,
this implies that the number of solutions is constant for $t\in (0,1]$.
Therefore, the uniqueness for small $t$ proved in Section~\ref{Sec 2}
therefore gives uniqueness for $t=1$.
\end{proof}

\begin{proof}[\bf Proof of Theorem~\ref{thm:intro-asymptotic-uniqueness}]
Fix $\varepsilon\in(0,1)$. By Theorem~\ref{thm:small-capacity-asymptotic},
there exists $c_\varepsilon>0$ such that
\[
 B(c)\geq 8\left(1-\frac{\varepsilon}{2}\right)c,
 \qquad 0<c\leq c_\varepsilon.
\]

We first observe that, if $\delta_\varepsilon$ is sufficiently
small, then $c_j<c_\varepsilon$ for every $j$. Indeed, by
Theorem \ref{thm:local},
\[
 B(c)\geq\frac{4\pi \kappa c}{2+\kappa c},
\]
so if $c_j\geq c_\varepsilon$ for some $j$, then
\[
 N\rho=\sum_{i=1}^mM_i
 \geq M_j
 \geq\frac{4\pi \kappa c_\varepsilon}{2+\kappa c_\varepsilon}.
\]
This contradicts
\[
 N\rho
 \leq\frac{8(1-\varepsilon)}{\pi^2}
 \lam(\T)|\T|
\]
once $\lam(\T)|\T|\leq\delta_\varepsilon$ and
$\delta_\varepsilon$ is chosen sufficiently small.
Decreasing $\delta_\varepsilon$ further if necessary, we may
also assume $N\rho<4\pi$, so that the nodal-domain argument
used above applies. Hence Theorem~\ref{thm:small-capacity-asymptotic} and
Lemma \ref{lem:capacity-sum-analytic} give
\[
 \begin{aligned}
 N\rho
 &=\sum_{j=1}^mM_j
 \geq
 8\left(1-\frac{\varepsilon}{2}\right)
 \sum_{j=1}^mc_j\\
 &\geq
 8\left(1-\frac{\varepsilon}{2}\right)
 \frac{m^2}{4\pi^2}\lam(\T)|\T|
 \geq
 \frac{8(1-\varepsilon/2)}{\pi^2}
 \lam(\T)|\T|.
 \end{aligned}
\]
This contradicts
\[
 N\rho
 \leq
 \frac{8(1-\varepsilon)}{\pi^2}
 \lam(\T)|\T|.
\]
This completes the proof.
\end{proof}

\section{Non-uniqueness of solutions}\label{Sec 6}

This final section is devoted to the proofs of Theorems \ref{thm:intro-nonuniqueness}, \ref{thm 1.5} and more multiplicity results.
Let $\tau \in \mathbb{H}=\left \{  \tau\in\mathbb C|\operatorname{Im}\tau>0\right \}$, $\Lambda_{\tau}=\mathbb{Z}+\mathbb{Z}\tau$, and denote
$$\omega_{0}=0,\quad\omega_{1}=1,\quad\omega_{2}=\tau,\quad\omega_{3}=1+\tau.$$Let $E_{\tau}:=\mathbb{C}/\Lambda_{\tau}$ be a normalized flat torus and $E_{\tau}[2]:=\{ \frac{\omega_{k}}{2}|k=0,1,2,3\}+\Lambda
_{\tau}$.  
The Green function $G(z,w)=G(z,w;\tau)$ of the flat torus $E_{\tau}$ is the unique function that satisfies
\[
-\Delta_z G(z, w)=\delta_{w}-\frac{1}{\left \vert E_{\tau}\right \vert }\text{
\ on }E_{\tau},\quad
\int_{E_{\tau}}G(z,w)\mathrm{d}A_z=0.
\]
By the translation invariance of $\Delta_z$, we have $G(z,w)=G(z-w,0)$ and it is enough to consider the Green function
$G(z):=G(z,0)$, then $G(z,w)=G(z-w)$.
Clearly $G(z)$ is an even function on $E_{\tau}$ with the only
singularity at $0$.

Firstly, we consider the precise formula of $\lambda_1(\T)$ for  flat torus $\T=E_{\tau}$. Let $\tau=a+bi$, recall the dual lattice in \eqref{Intro lambda}, $\xi\in \Lambda_{\tau}^*$ is equivalent to $\xi\cdot (1,0), \xi\cdot (a,b)\in 2\pi\mathbb{Z}$, then  there exists $(m,n)\in \mathbb{Z}\times\mathbb{Z}$ such that 
\begin{align*}
    \xi_1=2\pi m,\qquad \xi_1a+\xi_2 b=2\pi n.
\end{align*}
Using \eqref{Intro lambda_1}, 
then we can get $$\lambda_1(E_{\tau})=\frac{4\pi^2}{(\mathrm{Im}\tau)^2}\min_{m,n\not=(0,0)}|n-m\tau|^2. $$
If  $\mathrm{Im}\tau >\frac{\pi}{2} $, then 
\begin{align*}
    \lambda_1(E_{\tau})=\frac{4\pi^2}{(\mathrm{Im}\tau)^2}, \quad \lambda_1(E_{\tau})|E_{\tau}|=\frac{4\pi^2}{\mathrm{Im}\tau}<8\pi.
\end{align*}
We also introduce a special translation 
\begin{align}\label{Sigma tau}
    \sigma_{\tau}(z)=z+\frac{\tau}{2},\qquad \sigma_\tau(\frac{\tau}{4})=-\frac{\tau}{4} \quad\text{in }E_{\tau}.
\end{align}
A function $u$ is called $\sigma_{\tau}$ invariant if and only if $u\circ \sigma_{\tau}=u$ in $E_{\tau}$.
Motivated by the variational instability mechanism for the smooth mean field equation on flat tori, developed in Ricciardi–Tarantello \cite{RicciardiTarantello98}, Lin–Lucia \cite{LinLucia07}, and Gu–Gui–Hu–Li \cite{GGHL}, we adapt the first-eigenmode argument to a symmetric pair of positive singularities.

\begin{theorem}\label{thm:nonunique}
Suppose that \(\mathrm{Im}\,\tau>\frac{\pi}{2}\) and $
\frac{1}{2}\lam(E_{\tau})|E_{\tau}|<\rho<4\pi$ . Then one can choose the point \(p=\frac{\tau}{4}\in E_{\tau}\) such that the equation
\begin{align}\label{eq: p-p}
	\Delta u+e^u=\rho(\delta_p+\delta_{-p})
	\qquad \text{on } E_{\tau}
\end{align}
admits at least two solutions that are not $\sigma_{\tau}$-invariant and a unique $\sigma_{\tau}$-invariant solution.
\end{theorem}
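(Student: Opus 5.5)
The argument works on the subspace of $\sigma_\tau$-invariant functions, plays the variational structure there against the first eigenmode of the torus, and then uses the symmetry $\sigma_\tau$ to produce two solutions from one.

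\textbf{Reduction to the quotient torus.} Since $\sigma_\tau$ swaps $p=\tau/4$ and $-p$, a $\sigma_\tau$-invariant solution of \eqref{eq: p-p} is the same thing as a solution on the quotient torus $E':=\mathbb C/(\mathbb Z+\mathbb Z\tfrac{\tau}{2})$ with one singularity of mass $\rho$ at the image of $p$. By the one-singularity result of Lin--Wang \cite[Theorem 1.4]{LinWang17}, applied after rescaling, that equation has a unique solution whenever $\rho\in(0,8\pi)$. This gives existence and uniqueness of the $\sigma_\tau$-invariant solution, which we denote $u_0$.

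\textbf{Instability of $u_0$.} Write $u=S+v+c$ as in Section \ref{Sec 2}, so that solutions correspond to critical points of
\[
I_\rho(v)=\frac12\int_{E_\tau}|\nabla v|^2-2\rho\log\int_{E_\tau}he^{v}
\]
on $\{v\in H^1:\int v=0\}$. Since $2\rho<8\pi$, Moser--Trudinger makes $I_\rho$ coercive and bounded below. I will show that $u_0$ is not a local minimizer. The test direction is
\[
\varphi(z)=\cos\Bigl(\frac{2\pi\,\mathrm{Im} z}{\mathrm{Im}\tau}\Bigr),
\]
which is the first eigenfunction: $-\Delta\varphi=\lambda_1\varphi$ with $\lambda_1=4\pi^2/(\mathrm{Im}\tau)^2$ because $\mathrm{Im}\tau>\pi/2$. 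It is anti-invariant, $\varphi\circ\sigma_\tau=-\varphi$.

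Let $P=he^{v_0}/\int he^{v_0}$. The second variation is
\[
D^2I_\rho(v_0)[\varphi,\varphi]=\int|\nabla\varphi|^2-2\rho\Bigl(\int P\varphi^2-\Bigl(\int P\varphi\Bigr)^2\Bigr).
\]
Because $P$ is $\sigma_\tau$-invariant and $\varphi$ is anti-invariant, $\int P\varphi=0$. The main obstacle is then to bound $\int P\varphi^2$ from below, since $P$ is not explicit. Here is the plan for that step. The quantity $\int P\varphi^2$ depends on $\rho$ and on the concentration of $P$. With $p=\tau/4$ we have $\varphi(\pm p)=\cos(\pi/2)=0$, so placing $p$ at the nodal line is the worst case, and a naive bound fails. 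I will therefore instead use the family $\varphi_s(z)=\cos\bigl(2\pi(\mathrm{Im} z-s)/\mathrm{Im}\tau\bigr)$, $s\in\mathbb R$; every member is still an anti-invariant first eigenfunction. Averaging over $s$ and using $\cos^2+\sin^2=1$ gives
\[
\frac{1}{\mathrm{Im}\tau}\int_0^{\mathrm{Im}\tau}\int P\varphi_s^2\,ds=\frac12\int P=\frac12.
\]
Hence some $s$ satisfies $\int P\varphi_s^2\ge\frac12$, while $\int|\nabla\varphi_s|^2=\lambda_1\int\varphi_s^2=\lambda_1|E_\tau|/2$. For this $s$,
\[
D^2I_\rho(v_0)[\varphi_s,\varphi_s]\le\frac{\lambda_1|E_\tau|}{2}-\rho<0
\]
by the hypothesis $\rho>\frac12\lambda_1|E_\tau|$. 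So $u_0$ is not a local minimizer of $I_\rho$.

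\textbf{Producing two non-invariant solutions.} $I_\rho$ attains its global minimum at some $v_1$. Since $u_0$ is not a minimizer, $v_1$ is not $\sigma_\tau$-invariant, because $u_0$ is the only invariant solution. By the $\sigma_\tau$-invariance of $I_\rho$, $v_1\circ\sigma_\tau$ is also a minimizer, and it is distinct from $v_1$. This yields at least two non-invariant solutions, together with the unique invariant one.
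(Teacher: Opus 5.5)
Your proof is correct and follows the paper's argument: the invariant solution and its uniqueness come from the quotient torus and Lin--Wang, and an invariant global minimizer is ruled out by the second variation along anti-invariant first eigenmodes. Your averaging over the shifts $\varphi_s$ is the same device as the paper's use of the cosine/sine pair with $\varphi_1^2+\varphi_2^2=1$. The only cosmetic difference is that the paper applies this instability computation directly to a hypothetical invariant minimizer, rather than first identifying that minimizer with $u_0$.
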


\begin{proof}
Choose $
   p=\frac{\tau}{4}
$ and define 
the translation \(\sigma_{\tau}(z)=z+\frac{\tau}{2}\) interchanges \(p\) and \(-p\).
Put
\[
   S(z)=-\rho G(z,p)-\rho G(z,-p),
   \qquad
   h=e^S.
\]
Then \eqref{eq: p-p} is equivalent to
\begin{equation}\label{eq:regularized}
   -\Delta v=2\rho\left(\frac{he^v}{\int_{E_{\tau}} he^v\,\ud A}-\frac1{|E_{\tau}|}\right),
   \qquad
   \int_{E_{\tau}} v\,\ud A=0,
\end{equation}
with \(u=S+v+c\). Equation \eqref{eq:regularized} is the Euler--Lagrange equation of
\[
   J(v)=\frac12\int_{E_{\tau}} |\nabla v|^2\,\ud A
        -2\rho\log\int_{E_{\tau}} he^v\,\ud A
\]
on the zero-average space. Since \(\rho<4\pi\), the Moser--Trudinger inequality gives coercivity, so \(J\) has a global minimizer. The weight \(h\) is \(\sigma_{\tau}\)-invariant.

Suppose that a global minimizer \(v\) is \(\sigma_{\tau}\)-invariant. Let
\[
   \ud P_v=\frac{he^v}{\int_\T he^v\,\ud A}\,\ud A.
\]
Then \(P_v\) is \(\sigma_{\tau}\)-invariant. Set $\xi_0=\frac{2\pi i}{\mathrm{Im}\tau} $ and 
\[
   \varphi_1(z)=\cos(\mathrm{Re}(\xi_0\bar z)),
   \qquad
   \varphi_2(z)=\sin(\mathrm{Re}(\xi_0\bar z)),
\]
where we use the complex variable $z=x+iy$.
Since \(\varphi_i\circ\sigma_{\tau}=-\varphi_i\), we have \(\int_{E_{\tau}} \varphi_i\,\ud P_v=0\). The second variation is
\[
   D^2J_v(\varphi)=\int_{E_{\tau}} |\nabla\varphi|^2\,\ud A
   -2\rho\left(\int_{E_{\tau}}\varphi^2\,\ud P_v-\left(\int_{ E_{\tau}}\varphi\,\ud P_v\right)^2\right).
\]
Since $v$ is the global minimizer, we have \( D^2J_v\ge0\). But
\[
   \varphi_1^2+\varphi_2^2=1,
   \qquad
   |\nabla\varphi_1|^2+|\nabla\varphi_2|^2=|\xi_0|^2=\lam(E_{\tau}),
\]
and hence
\[
   D^2J_v(\varphi_1)+D^2J_v(\varphi_2)=\lam(E_{\tau})|E_{\tau}|-2\rho<0,
\]
a contradiction. Thus no global minimizer is \(\sigma_{\tau}\)-invariant. If \(v\) is a minimizer, then \(v\circ\sigma_{\tau}\) is another distinct minimizer, and the corresponding functions \(u=S+v+c\) and \(u_2=S+v\circ\sigma_{\tau}+c\) are two distinct solutions of \eqref{eq: p-p}.

Finally, \eqref{eq: p-p} has a third solution that is \(\sigma_{\tau}\)-invariant.  Let $E_{\tau}'$ be the flat torus generated by $\tau/2$ and $1$ , and let $u_3(x)$ be a solution of
\begin{equation}\label{qeeq}\Delta u+e^u=\rho\delta_p
	\qquad \text{on } E_{\tau}',\end{equation}
    then $u(z+\tau/2)=u(z)$. 
    Since $p=-p$ in $E_{\tau}'$ but $p\neq -p$ in $E_{\tau}$, we see that $u_3(z)$ is a \(\sigma_{\tau}\)-invariant solution of \eqref{eq: p-p}. 
    
    Suppose there exists another solution $u_3'$ satisfying $u_3'\circ\sigma_{\tau}=u_3'$, then $u_3'$ is well-defined on $E'_{\tau}$ and hence a solution of \eqref{qeeq}. Since $\rho<4\pi$, as mentioned in \eqref{eq:one-rho}, it follows from \cite[Theorem 1.4]{LinWang17} that the solution of \eqref{qeeq} is unique, so $u_3'=u_3$. Therefore, the $\sigma_{\tau}$-invariant solution is unique.
\end{proof}

By the similar technique, we can construct multiple solutions of \eqref{eq:asymptotic-nonunique} for almost sharp $\rho$.

\begin{theorem}\label{thm:asymptotic-nonunique}
For every $\varepsilon>0$, there exists $b_\varepsilon>0$ such that,
if $\tau=\mathrm{i}b$ and $b\geq b_\varepsilon$, then the equation
\begin{equation}\label{eq:asymptotic-nonunique}
 \Delta u+e^u=\frac{16(1+\varepsilon)}{b}
 (\delta_p+\delta_{-p})
 \qquad\text{on }E_\tau
\end{equation}
admits at least three distinct solutions, where $p=\tau/4\in E_\tau$.
\end{theorem}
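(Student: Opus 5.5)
The plan is to repeat the instability argument of Theorem~\ref{thm:nonunique}, but with the Fourier test functions $\varphi_1,\varphi_2$ replaced by a $\sigma_\tau$-odd test function adapted to the one-dimensional model of Lemma~\ref{lem:one-dimensional-mass}. Set $\rho_b=16(1+\varepsilon)/b$. For large $b$ we have $2\rho_b<4\pi$, so $J$ is coercive and has a global minimizer $v$. As in the proof of Theorem~\ref{thm:nonunique}, it suffices to show that no global minimizer is $\sigma_\tau$-invariant. Then $v$ and $v\circ\sigma_\tau$ give two distinct non-invariant solutions. The solution $u_3$ lifted from $E_\tau'=\mathbb C/(\mathbb Z+\mathbb Z\tau/2)$ is $\sigma_\tau$-invariant, and it is the unique such solution by \cite[Theorem 1.4]{LinWang17}. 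This gives three solutions, and it also yields part (ii) of Theorem~\ref{thm:intro-nonuniqueness}.

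\textbf{Reduction to an instability statement.} Suppose a minimizer is $\sigma_\tau$-invariant. Then it corresponds to $u_3$. Hence it suffices to find a $\sigma_\tau$-odd $\varphi$, meaning $\varphi\circ\sigma_\tau=-\varphi$, with
\[
D^2J(\varphi)=\int_{E_\tau}|\nabla\varphi|^2-\int_{E_\tau}e^{u_3}\varphi^2<0.
\]
Here I use $2\rho_b\,dP=e^{u_3}dA$, and the mean term drops because $\varphi$ is odd while $P$ is invariant. I will take $\varphi=\varphi(y)$ depending only on $y=\mathrm{Im}z$. It vanishes on the two lines $y=\pm b/4$ containing $\pm p$. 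On the strip $(-b/4,b/4)$ I take $\varphi=q$, where $q$ is the first Dirichlet eigenfunction of $-\partial_y^2-\bar m(y)$, with
\[
\bar m(y):=\int_0^1e^{u_3}(x,y)\,dx.
\]
On $(b/4,3b/4)$ I extend $\varphi$ by $-q(y-b/2)$, which makes it $\sigma_\tau$-odd. Then $D^2J(\varphi)=2\lambda_1\!\left(-\partial_y^2-\bar m\right)$ on an interval of length $L=b/2$. So everything reduces to showing that this one-dimensional Dirichlet eigenvalue is negative.

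\textbf{One-dimensional comparison.} Note that $L\int\bar m = (b/2)\rho_b = 8(1+\varepsilon)$, which exceeds the sharp threshold $8$ of Lemma~\ref{lem:one-dimensional-mass}. I would rescale $y=bs$ and set $\widetilde U=u_3(x,bs)+2\log b$, exactly as in Step 1 of the proof of Theorem~\ref{thm:small-capacity-asymptotic}. As $b\to\infty$, I expect $\widetilde U$ to become $x$-independent on compacts away from the singular lines, with $e^{\widetilde U}\to m(s)$. The limit $m$ should solve $v''+e^v=0$ on $(-\frac14,\frac14)$, with mass $16(1+\varepsilon)$ per period $\frac12$ (total mass carried over to the limit), and with symmetric slopes forced at the source lines. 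A direct computation with the profile $2a^2\operatorname{sech}^2(a s)$ shows that the Dirichlet eigenvalue of $-\partial_s^2-m$ on $(-\frac14,\frac14)$ is negative exactly when $L\int m>8$, i.e. when $a L/2>T$. That holds here by the factor $1+\varepsilon$. By continuity of the first eigenvalue under $L^1$ convergence of the potential, the eigenvalue for $\bar m$ is negative for $b\ge b_\varepsilon$.

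\textbf{Main obstacle.} The hard step is justifying the convergence of the averaged potential $\bar m$ to the explicit sech$^2$ profile, with no mass lost near the singular lines and no loss of mass to concentration. I would first try the explicit route. Write $u_3=-\rho_b G'(\cdot-p)+v+c$ on $E'_\tau$. The difference between $G'$ and its $x$-average is $O(e^{-2\pi|y\mp b/4|})$ plus a logarithmic part confined to an $O(1)$ neighborhood of the source. That neighborhood carries only $O(\rho_b)$ mass, and this is negligible after rescaling. The integral estimates \eqref{eq:weighted-potential-bound} then control $\|\partial_x v\|$ and give compactness. If the limit equation only comes out as an inequality, as in \eqref{eq:limiting-potential-inequality}, I would instead use that $u_3$ is the minimizer on $E'_\tau$. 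Energy comparison with the $x$-independent competitor then pins the limit to the source-free profile.
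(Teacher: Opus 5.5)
Your strategy is the paper's. A $\sigma_\tau$-invariant global minimizer (necessarily the lift $u_3$, as you observe) is tested against an $x$-independent, $\sigma_\tau$-odd direction. After rescaling $y=bs$ one passes to a one-dimensional Liouville limit on $I=(-\frac14,\frac14)$ and uses that its mass $16(1+\varepsilon)$ exceeds the sharp threshold. The paper uses the explicit witness $\psi=(1+\varepsilon)+\frac s2U_0'(s)\in H^1_0(I)$, which satisfies $-\psi''-e^{U_0}\psi=-\varepsilon e^{U_0}$. Your use of the first Dirichlet eigenfunction of the averaged potential instead is an inessential difference. Your check that the limit eigenvalue is negative iff $aL/2>T$ is correct.

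The gap is in the step you flagged, and your first suggested fix does not close it. If you argue by contradiction, stability of $u_3$ in odd directions does give a Dirichlet stability inequality on the half-strip, hence the bounds \eqref{eq:weighted-potential-bound}. But these give only interior compactness, so mass may a priori escape to the source lines $s=\pm\frac14$. They also give only the one-sided relation \eqref{eq:limiting-potential-inequality}. That is not enough. Lemma~\ref{lem:one-dimensional-mass} converts stability into a \emph{lower} mass bound, whereas you need ``mass above threshold implies instability''. That implication fails without the exact equation: the constant potential $m=36$ on $I$ satisfies $(\log m)''+m\ge0$ and $\frac12\int_I m=9>8$, yet $-\partial_s^2-m$ has Dirichlet eigenvalue $4\pi^2-36>0$. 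What you need is the following:
\begin{itemize}
\item $e^{U_b}\to e^{U_0}$ in $L^1$ over a full period;
\item $U_0''+e^{U_0}=0$ on $I$ with equality, and $\int_I e^{U_0}=16(1+\varepsilon)$;
\item $U_0(-\frac14)=U_0(\frac14)$. A non-symmetric $\operatorname{sech}^2$ profile can be stable above the threshold.
\end{itemize}

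The missing ingredient is a uniform energy bound from minimality, which is how the paper proceeds. Jensen gives $\int_Q h_b\ge1$, so $J_b(v_b)\le J_b(0)\le0$. Then Moser--Trudinger on the half-period torus $(\R/\Z)\times I$, where $\sigma_\tau$-invariance halves the constant to $\frac1{32\pi}$, yields $\int_Q(b^2v_{b,x}^2+v_{b,s}^2)\le C$, provided $16(1+\varepsilon)<8\pi$. From this bound:
\begin{itemize}
\item $v_b\to v_0(s)$, and $e^{v_b}\to e^{v_0}$ in $L^2$ by Moser--Trudinger applied to $3v_b$;
\item $h_b\to e^{S_0}$ in $L^q$ for all $q<\infty$, by the explicit Fourier expansion of the rescaled Green part (your explicit route handles this piece);
\item the Euler--Lagrange equation passes to the limit as an equality, and half-periodicity gives the symmetric boundary values.
\end{itemize}
Minimality is needed only for this energy bound, via comparison with $v=0$. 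The profile is then identified from the limiting equation, not by an energy comparison.

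Finally, because this bound requires $16(1+\varepsilon)<8\pi$, you need a case split that your proposal omits. If $16(1+\varepsilon)>2\pi^2$, then $\rho>2\pi^2/b=\frac12\lam(E_\tau)|E_\tau|$, and Theorem~\ref{thm:nonunique} already gives the conclusion.
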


\begin{proof}
Fix $\varepsilon>0$ and put $\kappa=16(1+\varepsilon)$ (In this proof, the notation $\kappa$ is not the one defined in \eqref{kappa}) and
$\rho=\kappa/b$. We use the notation $S$, $h$, $J$, and
$\sigma_\tau$ from the proof of Theorem~\ref{thm:nonunique},
with $p=\tau/4$. For sufficiently large $b$, $\rho<4\pi$.
The existence arguments in that proof, which require only
$\rho<4\pi$, give a global minimizer of $J$ and a
$\sigma_\tau$-invariant solution.
It therefore suffices to prove that no global minimizer
is $\sigma_\tau$-invariant when $b$ is sufficiently large.

Write $z=x+\tau s=x+\mathrm{i}bs$, where
$(x,s)\in Q:=(\mathbb R/\mathbb Z)^2$, and set
\[
 S_b(x,s):=S(x+\tau s),
 \qquad h_b=e^{S_b}.
\]
The area element is $\ud A=b\,\ud x\ud s$.
The normalization of the Green function gives
\begin{equation}\label{eq:Sb-equation}
 \left(b^2\partial_x^2+\partial_s^2\right)S_b
 =\kappa\bigl(\delta_{(0,1/4)}+\delta_{(0,-1/4)}\bigr)-2\kappa,
 \qquad \int_QS_b\,\ud x\ud s=0.
\end{equation}
Define
\[
S_0(s)=
\begin{cases}
\displaystyle
\kappa\left(\frac{1}{48}-\left(s+\frac12\right)^2\right),
& -\frac12\leq s<-\frac14,\\[2mm]
\displaystyle
\kappa\left(\frac{1}{48}-s^2\right),
& -\frac14\leq s\leq\frac14,\\[2mm]
\displaystyle
\kappa\left(\frac{1}{48}-\left(s-\frac12\right)^2\right),
& \frac14<s\leq\frac12,
\end{cases}
\]
and extend it $1$-periodically. This function is also
$1/2$-periodic, has zero average, and satisfies
$S_0''=\kappa(\delta_{1/4}+\delta_{-1/4})-2\kappa$.
Set $R_b=S_b-S_0$. Then $R_b$ satisfies
\[
 \left(b^2\partial_x^2+\partial_s^2\right)R_b
 =
 \kappa(\delta_0(x)-1)
 \left(\delta_{1/4}(s)+\delta_{-1/4}(s)\right),
\]
and has zero average.
Writing
\[
 R_b(x,s)=\sum_{m\neq0}r_{m,b}(s)e^{2\pi\mathrm{i}mx},
\]
and using
$\delta_0(x)-1=\sum_{m\neq0}e^{2\pi\mathrm{i}mx}$,
we obtain
\[
 r_{m,b}''-(2\pi mb)^2r_{m,b}
 =
 \kappa(\delta_{1/4}+\delta_{-1/4}).
\]
The periodic Green function of
$\frac{d^2}{ds^2}-\lambda^2$ is
\[
 -\frac{1}{2\lambda}
 \sum_{n\in\mathbb Z}e^{-\lambda|s-n|}.
\]
Hence, for $m\neq0$,
\[
 r_{m,b}(s)
 =
 -\frac{\kappa}{4\pi|m|b}
 \sum_{\sigma\in\{-1/4,1/4\}}
 \sum_{n\in\mathbb Z}
 e^{-2\pi|m|b|s-\sigma-n|}.
\]
Since $r_{m,b}=r_{-m,b}$ and $e^{2\pi i mx}+e^{-2\pi i mx}=2\cos(2\pi mx) $, and using
\[
 -\sum_{m=1}^{\infty}\frac{r^m}{m}\cos(m\theta)
 =\log|1-re^{\mathrm{i}\theta}|,
\]
gives
\begin{equation}\label{eq:Sb-Fourier}
 S_b(x,s)-S_0(s)
 =
 \frac{\kappa}{2\pi b}
 \sum_{\sigma\in\{-1/4,1/4\}}
 \sum_{n\in\mathbb Z}
 \log\left|
 1-e^{-2\pi b|s-\sigma-n|+2\pi\mathrm{i}x}
 \right|.
\end{equation}
Summing geometric series shows that
\[
 \sup_{s\in\mathbb R}
 \sum_{\sigma\in\{-1/4,1/4\}}\sum_{n\in\mathbb Z}
 e^{-2\pi b|s-\sigma-n|}\leq C,
 \qquad b\geq1.
\]
Since $\log|1-re^{\mathrm{i}\theta}|\leq r$, it follows that
$S_b\leq S_0+C\kappa/b$, and hence $0\leq h_b\leq C_\kappa$.
For every fixed $s\notin\{-1/4,1/4\}+\mathbb Z$, all the
numbers $e^{-2\pi b|s-\sigma-n|}$ are at most $1/2$ when
$b$ is sufficiently large. Using
$|\log|1-re^{\mathrm{i}\theta}||\leq2r$ for $r\leq1/2$
and the same geometric-series bound in \eqref{eq:Sb-Fourier},
we obtain $S_b(x,s)-S_0(s)\to0$ as $b\to+\infty$.
Thus $h_b\to e^{S_0}$ almost everywhere on $Q$.
The uniform upper bound and dominated convergence give
\begin{equation}\label{eq:hb-convergence}
 0\leq h_b\leq C_\kappa,
 \qquad h_b\longrightarrow h_0:=e^{S_0}
 \quad\text{in }L^q(Q),\quad 1\leq q<\infty.
\end{equation}

For a function on $E_\tau$, we use the same notation for
its pullback to $Q$, and write $\sigma_\tau$ also for
$(x,s)\mapsto(x,s+1/2)$. The rescaled functional is
\[
 J_b(v)=\frac12E_b(v)-2\kappa\log\int_Qh_be^v\,\ud x\ud s,
 \qquad
 E_b(v):=\int_Q(b^2v_x^2+v_s^2)\,\ud x\ud s,
\]
on the space $\int_Qv\,\ud x\ud s=0$.
Since $J_b(v)=bJ(v)+2\kappa\log b$, its minimizers
correspond exactly to those of $J$.

If $\kappa>2\pi^2$, then, for sufficiently large $b$,
\[
 \frac12\lam(E_\tau)|E_\tau|
 =\frac{2\pi^2}{b}<\rho=\frac{\kappa}{b}<4\pi,
\]
and the conclusion follows from Theorem~\ref{thm:nonunique}.
Thus it remains to consider $16<\kappa\leq2\pi^2$.

Suppose, towards a contradiction, that along a sequence
$b\to+\infty$ there are $\sigma_\tau$-invariant global
minimizers $v_b$ satisfying
\begin{align}\label{eq: E-L equation}
   -\left(b^2\partial_x^2+\partial_s^2\right)v_b= e^{U_b}-2\kappa.
\end{align}
Put $I=(-1/4,1/4)$.
Since $v_b(x,s+1/2)=v_b(x,s)$ and $\int_Qv_b=0$,
its restriction to $Q_{1/2}:=(\mathbb R/\mathbb Z)\times I$
has zero average. Applying the Moser--Trudinger inequality on this fixed
flat torus $Q_{1/2}$, 
\begin{align*}
    \log\int_{Q_{1/2}}e^{v_b}\,\ud x\ud s
 \leq C+\frac1{16\pi}\int_{Q_{1/2}}(v_{b,x}^2+v_{b,s}^2)\,\ud x\ud s.
\end{align*} So, form the symmetry of $v$, we can get
\[
 \log\int_Qe^{v_b}\,\ud x\ud s
 \leq C+\frac1{32\pi}\int_Q(v_{b,x}^2+v_{b,s}^2)\,\ud x\ud s
 \leq C+\frac1{32\pi}E_b(v_b).
\]
By \eqref{eq:hb-convergence}, $h_b\leq C_\kappa$.
Also, Jensen's inequality and $\int_QS_b=0$ give
$\int_Qh_b\geq1$, and hence $J_b(v_b)\leq J_b(0)\leq0$.
Consequently,
\[
 0\geq J_b(v_b)
 \geq\left(\frac12-\frac{\kappa}{16\pi}\right)E_b(v_b)-C_\kappa.
\]
Since $\kappa\leq2\pi^2<8\pi$, we obtain
\begin{equation}\label{eq:uniform-energy-bound}
\int_Q(b^2v_x^2+v_s^2)\,\ud x\ud s= E_b(v_b)\leq C_\kappa.
\end{equation}

After passing to a subsequence, compactness on $Q$ gives
\[
 v_b\rightharpoonup v_0(s)\quad\text{in }H^1(Q),
 \qquad
 v_b\longrightarrow v_0(s)\quad\text{in }L^2(Q).
\]
The limit is independent of $x$, since
$\int_Qv_{b,x}^2\leq C_\kappa/b^2$, and satisfies
$v_0(s+1/2)=v_0(s)$.
Applying the Moser--Trudinger inequality to $3v_b$
gives a uniform $L^3(Q)$ bound for $e^{v_b}$.
Thus uniform integrability yields
$e^{v_b}\to e^{v_0}$ in $L^2(Q)$, and
\eqref{eq:hb-convergence} gives
$h_be^{v_b}\to h_0e^{v_0}$ in $L^1(Q)$.

Define
\[
 U_b=S_b+v_b+
 \log\frac{2\kappa}{\int_Qh_be^{v_b}\,\ud x\ud s},
 \qquad
 U_0=S_0+v_0+
 \log\frac{2\kappa}{\int_Qh_0e^{v_0}\,\ud x\ud s}.
\]
Then $u(x+\tau s)=U_b(x,s)-2\log b$ is the
corresponding solution on $E_\tau$, and
\begin{equation}\label{eq:Ub-convergence}
 e^{U_b}\longrightarrow e^{U_0}\quad\text{in }L^1(Q),
 \qquad \int_Qe^{U_0}\,\ud x\ud s=2\kappa.
\end{equation}
Passing to the Euler--Lagrange equation \eqref{eq: E-L equation} against tests
depending only on $s$ gives $-v_0''=e^{U_0}-2\kappa$.
Together with the equation for $S_0$ and half-periodicity,
this yields
\begin{equation}\label{eq:limit-cell}
 U_0''+e^{U_0}=0\quad\text{on }I,
 \qquad U_0(-1/4)=U_0(1/4),
 \qquad \int_Ie^{U_0}\,\ud s=\kappa.
\end{equation}
In particular, $U_0\in W^{1,\infty}([-1/2,1/2])$ and  is smooth up to the endpoints from
within $I$.

We now construct a negative direction for the second variation.
Since $(U_0')^2+2e^{U_0}$ is constant on $I$,
the equal endpoint values and the mass in
\eqref{eq:limit-cell} give
\[
 U_0'(-1/4)=\frac{\kappa}{2},
 \qquad U_0'(1/4)=-\frac{\kappa}{2}.
\]
The above derivatives are understood as one-sided derivatives taken from the interior of $I$.
 Set
\[
 \psi(s)=\frac{\kappa}{16}+\frac{s}{2}U_0'(s).
\]
Then $\psi\in H^1_0(I)$. Moreover, $U_0''=-e^{U_0}<0$
implies $|U_0'|\leq\kappa/2$, so
$\psi(s)\geq\kappa(1-4|s|)/16>0$ in $I$.
Differentiating the Liouville equation gives
\[
 -\psi''-e^{U_0}\psi
 =\left(1-\frac{\kappa}{16}\right)e^{U_0}
 =-\varepsilon e^{U_0}.
\]
Extend $\psi$ to a $1$-periodic function $\varphi$ by
$\varphi=\psi$ on $I$ and
$\varphi(s+1/2)=-\varphi(s)$.
Then $\varphi\in H^1(Q)\cap L^\infty(Q)$ is independent
of $x$, and half-periodicity gives
$\int_Q\varphi=\int_Qe^{U_b}\varphi=0$.
The second-variation formula in the proof of
Theorem~\ref{thm:nonunique} therefore gives
\[
 0\leq D^2(J_b)_{v_b}(\varphi)
 =2\int_I\psi'^2\,\ud s
  -\int_Qe^{U_b}\varphi^2\,\ud x\ud s.
\]
Using \eqref{eq:Ub-convergence} and integrating the equation
for $\psi$ against $\psi$, we obtain
\begin{equation}\label{eq:negative-limit}
 0\leq2\int_I(\psi'^2-e^{U_0}\psi^2)\,\ud s
 =-2\varepsilon\int_Ie^{U_0}\psi\,\ud s<0,
\end{equation}
a contradiction. Thus, for sufficiently large $b$, a global minimizer and
its $\sigma_\tau$-translate give two distinct solutions,
neither of which is $\sigma_\tau$-invariant.
\end{proof}
\begin{remark}\label{rem:asymptotic-sharpness}
For $\tau=\mathrm{i}b$ with $b\geq1$,
$
	\lam(E_\tau)|E_\tau|
	=
	\frac{4\pi^2}{b}.
$
Therefore the examples in
Theorem~\ref{thm:asymptotic-nonunique} satisfy
\[
	\rho
	=
	\frac{4(1+\varepsilon)}{\pi^2}
	\lam(E_\tau)|E_\tau|.
\]
Since the total singular mass of
\eqref{eq:asymptotic-nonunique} is $2\rho$, equivalently,
\[
	2\rho
	=
	\frac{8(1+\varepsilon)}{\pi^2}
	\lam(E_\tau)|E_\tau|.
\]
Hence the coefficient $8/\pi^2$ in the asymptotic uniqueness
estimate for the total singular mass is optimal.
\end{remark}

Next we continue to study the uniqueness and non-uniqueness of solutions for 
\begin{equation}
\Delta u+e^{u}=\rho(\delta_{p}+\delta_{-p})\quad\text{ on
}\; E_{\tau} \label{equ1-3}%
\end{equation}
when the real parameter $\rho$ is close to $4\pi$. To this goal, we need to recall some basic facts about the Green function
$$G_p(z)=\frac12\big(G(z-p)+G(z+p)\big),$$
where $p\in E_{\tau}\setminus E_{\tau}[2].$ Clearly $G_p(z)$ is even, so $-a$ is also a critical point of $G_p(z)$ if $a\in E_{\tau}\setminus\{0\}$ is. Clearly $\frac{\omega_k}{2}$, $k\in \{0,1,2,3\}$, are always critical points of $G_p(z)$. 

\medskip

\noindent{\bf Definition.} {\it A critical point $a\in E_{\tau}$ of $G_p$ is called trivial if $a=-a$ in $E_{\tau}$, i.e., $a\in E_{\tau}[2]$.  A critical point $a\in E_{\tau}$ is called nontrivial if $a\neq-a$ in $E_{\tau}$, i.e., $a\notin E_{\tau}[2]$.}
\medskip

Therefore, $\frac{\omega_k}{2}$, $k=0,1,2,3$, are all trivial critical points of $G_p(z)$, i.e., the number of critical points of $G_p(z)$ is an even number of at least $4$.

\begin{theorem}\cite{CFL}\label{main-thm-1} For any $p\in E_{\tau}\setminus E_{\tau}[2]$, $G_p(z)$ has at most $3$ pairs of nontrivial critical points, or equivalently, the number of critical points of $G_p(z)$ belongs to $\{4,6,8,10\}$, and each number in $\{4,6,8,10\}$ really occurs for different $(\tau, p)$'s. 

Moreover,  fix any $\tau$, then for almost all $p\in E_{\tau}\setminus E_{\tau}[2]$, all critical points of $G_p(z)$ are non-degenerate.
\end{theorem}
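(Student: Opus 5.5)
Since this result is quoted from \cite{CFL}, I would follow the strategy that Lin--Wang \cite{LW5} used for the single Green function. The first step is to write the critical point equation explicitly in Weierstrass functions. With $z=r+s\tau$ and the classical formula $4\pi\,\partial_zG(z)=-\bigl(\zeta(z)-r\eta_1-s\eta_2\bigr)$, evenness and the addition formula
\[
\zeta(z+p)+\zeta(z-p)=2\zeta(z)+\frac{\wp'(z)}{\wp(z)-\wp(p)}
\]
show that $z$ is a critical point of $G_p$ if and only if
\[
Z(z):=\zeta(z)-r\eta_1-s\eta_2=-\frac{\wp'(z)}{2(\wp(z)-\wp(p))}.
\]
The four points of $E_\tau[2]$ solve this trivially, because both sides are odd and periodic. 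Nontrivial solutions come in pairs $\pm a$.

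For the parity and the lower bound I would run a Morse-index count. The function $G_p$ tends to $+\infty$ at $\pm p$, so these two points act as local maxima. The Poincar\'e--Hopf theorem on $E_\tau$ then gives $\#\{\text{saddles}\}-\#\{\text{extrema}\}=2$ for nondegenerate configurations. Together with the pairing, this forces the total number of critical points to be even and at least $4$.

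The core step, and the one I expect to be the main obstacle, is the upper bound of three nontrivial pairs. Here I would use the mean-field correspondence: a nontrivial pair $\pm a$ should correspond to an even solution of
\[
\Delta u+e^u=4\pi(\delta_p+\delta_{-p}),
\]
whose total mass is $8\pi$. Such a solution is built from a developing map $f$ with unitary monodromy, of the form $f=e^{\pm 2Z\text{-type}}\cdot(\text{rational function of }\wp)$. Imposing unitarity of the monodromy should reduce the problem to a pair consisting of a real-analytic equation (the one displayed above) and a holomorphic algebraic equation in $\wp(a)$. The holomorphic equation should come from a second-order Fuchsian (generalized Lam\'e) ODE with apparent singularities at $\pm p$.

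My plan is to eliminate the accessory parameter, which should produce a polynomial in $\wp(a)$ whose degree I expect to be $4$. Its roots should correspond exactly to the critical values. Subtracting the trivial contribution would then leave at most $3$ pairs. This is where the count is delicate: one must check that each nontrivial critical pair produces a distinct root and that no root is lost at the trivial points. I would first verify the degree count carefully in the degenerate limit $p\to\omega_k/2$.

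To realize every count in $\{4,6,8,10\}$, I would deform in $(\tau,p)$. By the Morse count, critical points can only be created or destroyed in pairs of pairs, through degenerate bifurcations. I would then compute explicit limiting regimes, such as $\tau=\mathrm i b$ with $b\to\infty$ and $p$ near different half periods or near $\tau/4$, where $G_p$ is close to a one-dimensional profile like $S_0$ in Theorem~\ref{thm:asymptotic-nonunique} and its critical points can be counted directly. Continuity would then give each value.

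For generic nondegeneracy at fixed $\tau$, I would argue as follows. The set of $(z,p)$ with $\nabla G_p(z)=0$ and $\det\nabla^2G_p(z)=0$ is real-analytic in $(z,p)$. Away from $E_\tau[2]$, the map $(z,p)\mapsto\nabla_zG_p(z)$ has surjective differential in the $p$-directions, so the critical set is a $2$-dimensional real-analytic manifold. The degenerate points are the critical points of its projection to the $p$-variable, so Sard's theorem gives that their image has measure zero. Trivial critical points must be treated separately: there the Hessian is an explicit real-analytic function of $p$ that is not identically zero (check it near one limit), so its zero set is also null.
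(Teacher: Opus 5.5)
The paper gives no argument for this statement; it is quoted from \cite{CFL}, so your proposal has to stand on its own. Your preliminary steps are sound:
\begin{itemize}
\item The reduction of $\nabla G_p(z)=0$ to $Z(z)=-\wp'(z)/\bigl(2(\wp(z)-\wp(p))\bigr)$ is correct.
\item The Poincar\'e--Hopf identity $\#\{\text{saddles}\}=\#\{\text{minima}\}+2$ is correct.
\item The Sard argument for generic nondegeneracy works. The surjectivity you assert holds because, for $z=r+s\tau$, $\partial_zG_p(z)=-\frac{1}{8\pi}\bigl(\zeta(z+p)+\zeta(z-p)-2r\eta_1-2s\eta_2\bigr)$ is holomorphic in $p$. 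Its $p$-derivative $\frac{1}{8\pi}\bigl(\wp(z+p)-\wp(z-p)\bigr)$ is nonzero for $z\notin E_\tau[2]$.
\item At the $2$-torsion points, the explicit formula for $\det D^2G_p(q)$ quoted in the proof of Lemma~\ref{lemma1-11} handles the nondegeneracy.
\end{itemize}
The genuine gap is the upper bound of three nontrivial pairs, which is the actual content of the theorem.

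\textbf{Why your elimination step fails.} It cannot produce a finite polynomial in $\wp(a)$, because the holomorphic conditions put no effective constraint on $a$. Take any $a\notin E_\tau[2]\cup\{\pm p\}$, and set $c_a=\zeta(a+p)+\zeta(a-p)$ and $f_a(z)=e^{c_az}\sigma(z-a)/\sigma(z+a)$. Then $f_a'/f_a=c_a+\zeta(z-a)-\zeta(z+a)$ is elliptic of order two and vanishes at $\pm p$, hence only there and simply. So $y''=-\frac12\{f_a,z\}\,y$ is an even Fuchsian equation with apparent singularities (exponents $-\frac12,\frac32$) at $\pm p$, for every such $a$.

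The holomorphic side therefore sweeps out a whole complex curve. Eliminating the accessory parameter leaves a relation, not finitely many admissible values of $\wp(a)$. The only condition that singles out critical points is unitarity of the monodromy $f_a(z+1)=e^{c_a-2a\eta_1}f_a(z)$, $f_a(z+\tau)=e^{c_a\tau-2a\eta_2}f_a(z)$. By Legendre's relation, for $a=r+s\tau$ this is equivalent to $c_a=2(r\eta_1+s\eta_2)$, which is just $\nabla G_p(a)=0$ again. So the detour through the mean field equation is circular. The count ``degree $4$ minus the trivial contribution'' has no basis, and quoting ``at most three scaling families'' (Theorem~\ref{thm-CFL2}(3)) would presuppose the result.

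\textbf{A route that closes the gap.} You need a counting argument adapted to this real-analytic equation. One that works is the anti-holomorphic dynamics proof Bergweiler and Eremenko gave for the Lin--Wang theorem. With $b=\mathrm{Im}\,\tau$, Legendre's relation gives $r\eta_1+s\eta_2=\eta_1z-\frac{\pi}{b}(z-\bar z)$. Hence $z$ is critical iff $\bar z=g(z)$, where
\[
g(z)=z+\frac{b}{\pi}\Bigl(\zeta(z)-\eta_1z+\frac{\wp'(z)}{2(\wp(z)-\wp(p))}\Bigr).
\]
Moreover $\partial_zG_p=-\frac{1}{4b}\bigl(g(z)-\bar z\bigr)$, so $\det D^2G_p$ is a positive multiple of $1-|g'|^2$. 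Thus local minima are exactly the attracting fixed points of the $\Lambda$-equivariant anti-meromorphic map $z\mapsto\overline{g(z)}$.

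The poles at $0$ cancel, so $g'$ is elliptic with double poles only at $\pm p$. It therefore has four zeros (with multiplicity) modulo $\Lambda$. Since $g$ has no finite asymptotic values, Fatou's argument puts one of these critical points in each immediate attracting basin. By equivariance, distinct minima modulo $\Lambda$ receive distinct ones. This gives at most four minima, and your Poincar\'e--Hopf identity then gives at most $10$ critical points.

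\textbf{On the realization part.} Your claim that critical points appear only in ``pairs of pairs'' is not right. A fold away from $E_\tau[2]$ changes the total by $4$. A change by $2$ (e.g.\ $4\leftrightarrow6$) occurs through a symmetric pitchfork at some $q\in E_\tau[2]$, where $|\wp(p-q)+\eta_1-\pi/b|=\pi/b$. Once the upper bound is known, the signs of $\det D^2G_p$ at the four $2$-torsion points already force the count in some cases. Generically, four positive signs give exactly $10$ critical points, and three give exactly $8$.
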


The deep connection between \eqref{equ1-3} and $G_p(z)$ is realized via the bubbling
phenomena. Let $u_{k}$ be a sequence of solutions of (\ref{equ1-3}) with
$\rho=\rho_{k}\rightarrow4\pi$, and $\max_{E_{\tau}}%
u_{k}(z)\rightarrow+\infty$ as $k\rightarrow+\infty$. We call $q$ a blowup point of $\{u_k\}$ if there is a sequence $\{x_k\}_k$ such that $x_k\to q$ and $u_k(x_k)\to+\infty$ as $k\to+\infty$. Then it follows from
\cite{BT,CL-1} that $u_{k}$ has exactly one blowup point $q\in E_{\tau}\setminus\{\pm p\}$. Furthermore,
\[u_k(z)+2\rho_kG_p(z)-\frac{1}{|E_{\tau}|}\int_{E_{\tau}}u_k=\int_{E_\tau}G(z-y)e^{u_k(y)}dy\to 8\pi G(z-q)\]
uniformly in $K\Subset E_{\tau}\setminus\{q\}$. From here,
the
well-known \textit{Pohozaev identity} says that $\nabla G_p(q)=0$, i.e., the blowup point $q$ is a critical point of $G_p(z)$. 

There is an important quantity $D(q)$ related to the bubbling phenomenon.
Define the regular part $\tilde{G}(z,w)$ of $G(z,w)$ by
\begin{equation}\label{equ1-4}
\tilde{G}(z,w):=G(z,w)+\frac{1}{2\pi}\log|z-w|.
\end{equation}
Define
\begin{align}\label{equ1-5}
D(q):=\lim_{r\to0}\bigg(\int_{E_{\tau}\setminus B_r(q)}\frac{e^{8\pi (\tilde{G}(z,q)-\tilde{G}(q,q))-8\pi G_p(z)}-e^{-8\pi G_p(q)}}{|z-q|^4}-\int_{\mathbb R^2\setminus E_{\tau}}\frac{e^{-8\pi G_p(q)}}{|z-q|^4}
\bigg).\end{align}

\begin{theorem}\cite[Theorem 3.1 and Lemma 3.2]{CLW2004}\label{thm-B}
Let $u_{k}$ be a
sequence of bubbling solutions of \eqref{equ1-3} with $\rho=\rho
_{k}\rightarrow4\pi$, and denote by $q$ the blowup point, which is a critical point of
$G_{p}$. Denote $\lambda_{k}:=\max_{E_{\tau}}u_{k}(z)$, then there is a constant $c_q>0$ such that
\begin{equation}\label{bubb}
\rho_{k}-4\pi=c_q(D(q)+o(1))e^{-\lambda_{k}}.
\end{equation}
\end{theorem}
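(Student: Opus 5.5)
The estimate \eqref{bubb} is a refined local expansion of bubbling solutions around a single blowup point, in the spirit of Chen--Lin's sharp estimates. My plan is to reduce it to a Pohozaev-type integral identity. The main term of that identity, after an exact cancellation, is $D(q)e^{-\lambda_k}$.

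\textbf{Step 1: normalize the problem.} I write $u_k=-2\rho_kG_p+w_k+c_k$, where $w_k$ solves the regular mean field equation
\[
\Delta w_k+2\rho_k\Big(\frac{h e^{w_k}}{\int h e^{w_k}}-\frac1{|E_\tau|}\Big)=0,\qquad h=e^{-8\pi G_p}\ \text{(up to } \rho_k\to4\pi).
\]
Near $q$ the weight $h$ is smooth and positive, since $q\ne\pm p$ and the total mass $2\rho_k\to8\pi$ is one bubble. Let $x_k$ be the maximum point and $\lambda_k=u_k(x_k)$. Following Chen--Lin, I introduce the approximate solution
\[
U_k(z)=\lambda_k-2\log\big(1+\tfrac{\rho_k h(x_k)}{4}e^{\lambda_k}|z-x_k|^2\big)+\text{(harmonic correction)}.
\]
Using the Green representation formula, I then establish the sharp error bounds
\[
|u_k-U_k|\le C\lambda_k e^{-\lambda_k},\qquad |x_k-q|=O(\lambda_k e^{-\lambda_k}),\qquad |\nabla(\log h+8\pi\tilde G(\cdot,x_k))(x_k)|=O(\lambda_ke^{-\lambda_k}).
\]
The first bound holds in $B_\delta(x_k)$ up to a linear term in $z-x_k$ that is controlled by the gradient estimate. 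The gradient estimate itself comes from the Pohozaev identity on small balls.

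\textbf{Step 2: compute the mass deficit.} I split
\[
2\rho_k=\int_{E_\tau}e^{u_k}=\int_{B_\delta(x_k)}e^{u_k}+\int_{E_\tau\setminus B_\delta(x_k)}e^{u_k}.
\]
Outside the ball, $e^{u_k}=e^{-\lambda_k}\big(\text{const}\cdot e^{8\pi(\tilde G(z,q)-\tilde G(q,q))-8\pi G_p(z)}/|z-q|^4+o(1)\big)$, using $u_k\to 8\pi G(\cdot-q)-2\rho_kG_p+\text{const}$ with the constant tied to $-\lambda_k$. Inside the ball I expand the bubble integral. This gives $8\pi-\text{const}\cdot e^{-\lambda_k}\int_{\mathbb R^2\setminus B_\delta}|z-q|^{-4}h(q)$ together with the second-order Taylor terms of $h e^{8\pi\tilde G}$ at $q$. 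The $\Delta$-term of that expansion combines with the $\log$-type term $(\rho_k-4\pi)\lambda_k$. The key cancellation is that $\Delta\log h+8\pi\Delta\tilde G$ at $q$ equals $(16\pi-2\rho_k)/|E_\tau|\to0$, so no $\lambda_ke^{-\lambda_k}$ term survives at leading order. Adding the two contributions reproduces the integral in \eqref{equ1-5} with $\delta=r\to0$, and after rearranging,
\[
\rho_k-4\pi=c_q\,(D(q)+o(1))\,e^{-\lambda_k},\qquad c_q>0 .
\]

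\textbf{Main obstacle.} The hard part is Step 1: proving the error is $O(\lambda_ke^{-\lambda_k})$ rather than $O(e^{-\lambda_k/2})$. I would attempt this by first deriving the gradient estimate at $x_k$ from the Pohozaev identity, then bootstrapping the difference $u_k-U_k$ through the linearized equation. Its kernel, spanned by translation and dilation modes, is handled via orthogonality and by choosing the free parameters $x_k,\lambda_k$ appropriately. This is exactly the content of \cite[Theorem 3.1 and Lemma 3.2]{CLW2004}, which I take as the model argument.
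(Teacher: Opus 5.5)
The paper does not prove this statement; it quotes it from \cite{CLW2004}. Insofar as your last sentence simply defers to that reference, you agree with the paper. Read as an argument, however, your outline has a genuine gap, and the obstacle you single out is not the real one.

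The statement determines $\rho_k-4\pi$ up to $o(e^{-\lambda_k})$. Step~1 gives only $|u_k-U_k|\le C\lambda_ke^{-\lambda_k}$ on $B_\delta(x_k)$, so Step~2 yields $\int_{B_\delta(x_k)}e^{u_k}=\int_{B_\delta(x_k)}e^{U_k}+O(\lambda_ke^{-\lambda_k})$. This uncertainty is larger, by the factor $\lambda_k\to\infty$, than the term $c_qD(q)e^{-\lambda_k}$ you want to read off. The cancellation you invoke (no radial second-order term in the correction, hence no $\lambda_ke^{-\lambda_k}$ term in $\int e^{U_k}$) is a property of the approximate solution only. It says nothing about what $u_k-U_k$ contributes to the mass. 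The $O(\lambda_ke^{-\lambda_k})$ bound is the off-the-shelf Chen--Lin estimate. What is actually needed is that the error changes the inner mass only by $o(e^{-\lambda_k})$, modulo terms that vanish as $\delta\to0$.

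The missing idea is to compute the inner mass without integrating the crude pointwise bound. Since $q\neq\pm p$, we have $\Delta u_k+e^{u_k}=0$ in $B_\delta(x_k)$. Hence the circular mean $\bar u_k(r)$ about $x_k$ satisfies $(r\bar u_k')'+r\,\omega_k e^{\bar u_k}=0$ with $\bar u_k(0)=\lambda_k$ and $\bar u_k'(0)=0$, where $\omega_k$ is the circular mean of $e^{u_k-\bar u_k}$. So the radial part is pinned down by the data at the maximum point, and the nonradial part enters only quadratically. By your Step~1 bound and the gradient estimate, $\omega_k=1+O(r^4)$ up to negligible terms. Comparing with the exact bubble gives
$\int_{B_\delta(x_k)}e^{u_k}=-2\pi\delta\,\bar u_k'(\delta)=8\pi-64\pi\delta^{-2}e^{-\lambda_k}+O(\delta^2e^{-\lambda_k})+o(e^{-\lambda_k})$.

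Equivalently, use the Pohozaev identity
\[
2\int_{B_\delta(x_k)}e^{u_k}=\delta\int_{\partial B_\delta(x_k)}\Big(e^{u_k}+(\partial_\nu u_k)^2-\tfrac12|\nabla u_k|^2\Big)\,d\sigma .
\]
Write $u_k=-\frac{m_k}{2\pi}\log|z-x_k|+R_k$, with $m_k=\int_{B_\delta(x_k)}e^{u_k}$ and $R_k$ of zero flux through $\partial B_\delta(x_k)$. The identity then becomes $m_k(m_k-8\pi)=-4\pi\delta\int_{\partial B_\delta(x_k)}e^{u_k}\,d\sigma-4\pi\delta\int_{\partial B_\delta(x_k)}\big((\partial_\nu R_k)^2-\tfrac12|\nabla R_k|^2\big)\,d\sigma$. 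By the Green representation, the last term reduces to pairings of the Taylor coefficients of the regular part with the nonradial multipole moments of $e^{u_k}$ on $B_\delta(x_k)$. Hence it is $O(\delta^2e^{-\lambda_k})+o(e^{-\lambda_k})$. Adding the outer integral and letting $k\to\infty$, then $\delta\to0$, yields the regularized integral defining $D(q)$.

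There are also two smaller slips.
First, $\Delta\log h+8\pi\Delta\tilde G(\cdot,q)$ equals $(8\pi-2\rho_k)/|E_\tau|$ for $h=e^{-2\rho_kG_p}$, and is exactly $0$ in the $u$-normalization. It is not $(16\pi-2\rho_k)/|E_\tau|$, which tends to $8\pi/|E_\tau|\neq0$ and would leave a $\lambda_ke^{-\lambda_k}$ term.
Second, $|x_k-q|=O(\lambda_ke^{-\lambda_k})$ requires $\det D^2G_p(q)\neq0$, which Theorem~\ref{thm-B} does not assume. You only need $x_k\to q$ together with the gradient bound at $x_k$.
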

Therefore, when $D(q)\neq 0$, then $D(q)$ controls the sign of $\rho_k-4\pi$. For example, if $q$ is a critical point of $G_p(z)$ satisfying $D(q)>0$, then \eqref{bubb} implies $\rho_k>4\pi$, which means that $u_k$ can not blow up at this $q$ if $u_k$ is a sequence of solutions of \eqref{equ1-3} with $\rho=\rho
_{k}\uparrow 4\pi$.

\begin{theorem}\cite{CFL}\label{thm-CFL2}
Let $p\in E_{\tau}\setminus E_{\tau}[2]$. 
\begin{itemize}
\item[(1)] Once \begin{equation}
\Delta u+e^{u}=4\pi(\delta_{p}+\delta_{-p})\quad\text{ on
}\; E_{\tau}, \label{mean}%
\end{equation} has a solution, then it has a one-parameter scaling family of solutions $u_{\beta}(z)$, where $\beta>0$ is arbitrary.
\item[(2)]
There is a one-to-one correspondence between pairs of nontrivial critical points of $G_p(z)$ and one-parameter scaling families of solutions of \eqref{mean}. 
\item[(3)]
 \eqref{mean} has at most $3$ one-parameter scaling families of solutions, and every number in $\{0,1,2,3\}$ really occurs. 
\end{itemize}
\end{theorem}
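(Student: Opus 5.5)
The plan is to pass to the developing map and reduce everything to an explicit algebraic description of the solutions at $\rho=4\pi$. Let $u$ solve \eqref{mean}. By Liouville's theorem on the simply connected cover, there is a multivalued meromorphic function $f$ on $E_\tau$ with
\[
e^{u}=\frac{8|f'|^2}{(1+|f|^2)^2}.
\]
Its monodromy lies in $PSU(2)$. Since $u(z)=2\log|z\mp p|+O(1)$, the function $f$ is single-valued near $\pm p$ with local form $f\sim c+(z\mp p)^2$. Hence $\pm p$ are simple critical points of $f$, and $f$ has no other critical points. The total mass $\int e^u=8\pi$ says that $f$ covers the sphere with total degree $2$.

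\textbf{Step 1: the monodromy is abelian.} I would first show that the monodromy group is abelian, and therefore, after a unitary change of $f$, diagonal: $f(z+\omega_i)=e^{\mathrm i\theta_i}f(z)$. For this I would use the Schwarzian $\{f,z\}$, which is single-valued and elliptic with double poles only at $\pm p$ and prescribed leading coefficient. This gives the Fuchsian (Lam\'e/Treibich--Verdier type) equation
\[
y''=\Big(\tfrac34\big(\wp(z-p)+\wp(z+p)\big)+B\,\text{-terms}\Big)y .
\]
Non-logarithmic local exponents at $\pm p$ together with the degree count then force reducible monodromy, which is the standard mechanism at masses in $8\pi\mathbb N$. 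I expect this to be the main obstacle. The first thing I would try is the argument of Lin--Wang for the one-source equation at $8\pi$, adapted to two half-strength sources.

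\textbf{Step 2: part (1).} Once the monodromy is diagonal, $f$ can be replaced by $\beta f$ for any $\beta>0$. The multiplier is unchanged, the critical points are unchanged, and $u_\beta:=\log\frac{8\beta^2|f'|^2}{(1+\beta^2|f|^2)^2}$ is again a solution of \eqref{mean}. Distinct values of $\beta$ give distinct solutions unless $|f|$ is constant, which is impossible. This proves (1).

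\textbf{Step 3: part (2).} Since the monodromy is diagonal, $\log|f|$ is single-valued and harmonic away from the zeros and poles of $f$. Degree $2$ means two zeros and two poles, counted with multiplicity. Using the Green function, I would write
\[
\log|f|=2\pi\big(-G(z-a_1)-G(z-a_2)+G(z-b_1)+G(z-b_2)\big)+\mathrm{const}.
\]
The critical points of $f$ are the zeros of $(\log f)'$, a single-valued elliptic function with simple poles at $a_i,b_i$. Requiring these zeros to be exactly $\pm p$ should determine the configuration. Concretely, the oddness/evenness symmetry of the problem (apply $z\mapsto -z$ and uniqueness of the family) should force $\{b_1,b_2\}=-\{a_1,a_2\}$, with a pairing of the form $a_1=a$, $a_2=-a$ up to the involution $f\mapsto 1/f$.

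The vanishing condition at $\pm p$ then translates, via $\partial_z G(z)=-\frac{1}{4\pi}(\zeta(z)-\eta_1 z)+\dots$, into $\nabla G_p(a)=0$. Two further facts are needed:
\begin{itemize}
\item The unitarity of the multipliers is automatic, because $\log|f|$ is built from $G$, which is single-valued.
\item The non-degeneracy $f\not\equiv\mathrm{const}$ forces $a\neq -a$, that is, $a\notin E_\tau[2]$.
\end{itemize}
Conversely, given a nontrivial critical pair $\pm a$ of $G_p$, I would define $f$ by exponentiating the above Green combination and then verify directly that $u$ built from $f$ solves \eqref{mean}. Different pairs give different $|f|$ modulo scaling, and hence different families. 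This gives the bijection in (2).

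\textbf{Step 4: part (3).} Finally, (3) follows from (2) combined with Theorem \ref{main-thm-1}. That theorem says there are at most three nontrivial critical pairs, so there are at most three families. Moreover, each number of pairs in $\{0,1,2,3\}$ occurs for suitable $(\tau,p)$, so each number of families in $\{0,1,2,3\}$ occurs.
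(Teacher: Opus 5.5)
The paper gives no proof of this statement; it is quoted from \cite{CFL}. Your overall route is the right one: the developing map, unitary diagonal monodromy, $\log|f|$ written as a Green-function combination, critical points of $f$ as zeros of the elliptic function $(\log f)'$, and then Theorem~\ref{main-thm-1} for (3). Steps 2 and 4 are fine. However, Step 3, which carries part (2), is built on a wrong count of zeros and poles.

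The metric $8|dw|^2/(1+|w|^2)^2$ has total area $8\pi$, so mass $8\pi$ is one sheet, not two. Precisely: once the multipliers are $e^{\mathrm i\theta_j}$, the function $\log(1+|f|^2)$ is single-valued and $2\Delta\log(1+|f|^2)=e^u-8\pi\sum_b k_b\delta_b$, summing over the poles $b$ of $f$ of order $k_b$. Integrating gives $\int_{E_\tau}e^u=8\pi\sum_b k_b$, so $f$ has exactly one simple pole and, applying this to $1/f$, one simple zero. Your own observation that $\pm p$ are the only critical points already excludes two zeros and two poles: if they were simple, $(\log f)'$ would have degree $4$, hence four zeros, all of them critical points of $f$.

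The configuration you then reach is also degenerate. Taking $\{b_1,b_2\}=-\{a_1,a_2\}$ with $\{a_1,a_2\}=\{a,-a\}$ puts the poles on the zeros, so $f$ is constant. The symmetry argument behind it is circular as well. Members of a family are not even in general: for $f=e^{cz}\sigma(z-a)/\sigma(z+a)$ one has $f(-z)=1/f(z)$, so $z\mapsto-z$ sends $u_\beta$ to $u_{1/\beta}$. And uniqueness of the family is what you are trying to prove.

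The missing tool is Abel's theorem. The function $(\log f)'$ is elliptic of degree $2$, with simple poles at the zero $a$ and the pole $b$ of $f$, and zero divisor exactly $[p]+[-p]$. Hence $a+b\equiv p+(-p)=0$, i.e.\ $b=-a$, with $a\neq-a$. Then $\log|f|=2\pi\bigl(G(z+a)-G(z-a)\bigr)+\mathrm{const}$. Use $-4\pi\partial_zG(z)=\zeta(z)-r\eta_1-s\eta_2$ for $z=r+s\tau$, $r,s\in\R$, where $\eta_1,\eta_2$ are the quasi-periods of $\zeta$. This gives $(\log f)'=\zeta(z-a)-\zeta(z+a)+2(r_a\eta_1+s_a\eta_2)$, an even function. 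Its vanishing at $p$ is exactly $\partial_zG(a+p)+\partial_zG(a-p)=0$, i.e.\ $\nabla G_p(a)=0$. The converse construction, and the identification of $a\leftrightarrow-a$ with $f\leftrightarrow1/f$, then go through as you describe.

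A second, smaller gap is in Step 1. The ``degree count'' is not the mechanism, and ``abelian, hence diagonal after a unitary change'' is false in $PSU(2)$. For example, $w\mapsto-w$ and $w\mapsto1/w$ commute there but have no common fixed point; they generate a Klein four-group. What works is the following. Since $f$ is single-valued near $\pm p$, the projective monodromy factors through $\pi_1(E_\tau)\cong\Z^2$. To exclude the Klein case, pass to the $SL_2$ monodromy of your Fuchsian equation, which lies in a conjugate of $SU(2)$. Its exponents at $\pm p$ are $-\tfrac12,\tfrac32$ with no logarithm, so both local monodromies equal $-I$. The relation $[\gamma_1,\gamma_2]=\ell_p\ell_{-p}$ in $\pi_1(E_\tau\setminus\{\pm p\})$ then gives $[M_1,M_2]=(-I)(-I)=I$. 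Commuting elements of $SU(2)$ are simultaneously diagonalizable. This step genuinely uses that there are two sources: for a single source of strength $4\pi$ the commutator would be $-I$.
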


\begin{remark}
Let $q$ be a nontrivial critical point of $G_p(z)$. Then $D(q)=0$.
Indeed, it follows from \cite{CFL} that \eqref{mean}
has a one-parameter scaling family of solutions $u_\beta(z)$ such that $u_\beta(z)$ blows up at $q$ as $\beta\to+\infty$. Then by \eqref{bubb} we obtain $D(q)=0$.

Therefore, if $D(q)\neq 0$ for some critical point of $G_p(z)$, then $q$ must be a trivial critical point of $G_p(z)$, i.e., $q=\frac{\omega_k}{2}$ for some $k=0,1,2,3$.
\end{remark}

The following result is a special case of \cite[Theorem 1.5]{EF}.
\begin{theorem}\cite[Theorem 1.5]{EF}\label{thm-C}
Suppose $q
=\frac{\omega_k}{2}$ is a trivial critical point of $G_p(z)$ such that both
$D(q)$ and $\det D^{2}G_{p}(q)$ do not vanish. Then
there exists $\varepsilon_{0}>0$ such that for any $0<\varepsilon
<\varepsilon_{0}$, equation \eqref{equ1-3} with $$\rho=\begin{cases}4\pi+\varepsilon\quad\text{if }D(q)>0\\
4\pi-\varepsilon\quad\text{if }D(q)<0\end{cases}$$ possesses a solution $u_{\varepsilon}(z)$. Moreover, $u_{\varepsilon
}(z)$ blows up exactly at $q$ as $\varepsilon
\rightarrow0$.
\end{theorem}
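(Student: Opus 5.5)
The plan is a finite-dimensional Lyapunov--Schmidt reduction around a single bubble placed at $q=\frac{\omega_k}{2}$, exploiting the symmetry of the problem to freeze the location of the bubble. Since $2q\in\Lambda_\tau$ and $G_p$ is even, the reflection $z\mapsto 2q-z$ coincides with $z\mapsto -z$ on $E_\tau$ and fixes $\{\pm p\}$. Hence \eqref{equ1-3} is invariant under this reflection, and I would look for solutions in the class of functions even about $q$. In this class the bubble center is forced to be $q$, and the translation modes of the linearized Liouville operator, which are odd about $q$, are eliminated. The hypothesis $\det D^2G_p(q)\neq0$ will be used to guarantee that no further degeneracy appears in the outer region. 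It would also be needed if one wished to drop the symmetry and treat the center $\xi$ as a parameter to be fixed by the equation $\nabla G_p(\xi)=0$ via the implicit function theorem.

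\textbf{Step 1 (approximate solution).} As in the proof of the small-$\rho$ lemma in Section~\ref{Sec 2}, write $u=S+v+c$ with $S=-2\rho G_p$. For a large parameter $\lambda$, take the ansatz
\[
W_\lambda(z)=\log\frac{8e^{\lambda}}{(1+e^{\lambda}|z-q|^2)^2}-8\pi\bigl(\tilde G(z,q)-\tilde G(q,q)\bigr)+2\rho G_p(z)-2\rho G_p(q)+\text{(corrections)},
\]
glued with the cut-off $8\pi G(z-q)$ away from $q$. This choice makes the ansatz match the far-field behavior $u_k\to 8\pi G(z-q)$ recorded before Theorem~\ref{thm-B}. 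I would then add the standard second-order correction so that the residual $\Delta W_\lambda+e^{W_\lambda}-\rho(\delta_p+\delta_{-p})$ is of size $O(\lambda e^{-\lambda})$ in a weighted $L^\infty$ norm. The quantity $D(q)$ of \eqref{equ1-5} appears precisely as the coefficient of $e^{-\lambda}$ when one integrates this residual against the dilation mode $\frac{1-e^{\lambda}|z-q|^2}{1+e^{\lambda}|z-q|^2}$.

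\textbf{Step 2 (linear theory and reduction).} On the even-about-$q$ space, prove that the linearized operator $L_\lambda=\Delta+e^{W_\lambda}$ is uniformly invertible, with bound $O(\lambda)$ in weighted norms, on the complement of the single approximate kernel element given by the dilation mode. The argument is the usual blow-up/contradiction one: near $q$ the limit problem is the linearized Liouville equation on $\R^2$, whose even bounded kernel is spanned by the dilation mode. Away from $q$ the limit is $\Delta\phi=0$ on $E_\tau$ with mean-zero normalization, and one checks that this forces $\phi$ to vanish. A contraction argument then produces $\phi_\lambda$ with $\|\phi_\lambda\|=O(\lambda^2e^{-\lambda})$ such that $W_\lambda+\phi_\lambda$ solves \eqref{equ1-3} up to a multiple $c(\lambda,\rho)$ of the dilation mode.

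\textbf{Step 3 (reduced equation, the main obstacle).} Projecting onto the dilation mode, the condition $c(\lambda,\rho)=0$ becomes a scalar equation. I expect it to take the form
\[
\rho-4\pi=c_q\bigl(D(q)+o(1)\bigr)e^{-\lambda},
\]
which is consistent with Theorem~\ref{thm-B}. The hard part is making this expansion rigorous. The error must be $o(e^{-\lambda})$, while the naive remainder terms are of order $\lambda e^{-\lambda}$. These larger terms have to cancel, which I would verify using a Pohozaev-type identity on $B_r(q)$ together with $\nabla G_p(q)=0$. The limit $r\to0$ in \eqref{equ1-5} has to be handled with the same care.

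Once the reduced equation is established, the conclusion follows. Given small $\varepsilon>0$ with $\rho=4\pi\pm\varepsilon$ and the sign chosen to agree with that of $D(q)$, continuity and monotonicity in $\lambda$ yield a root $\lambda_\varepsilon\to+\infty$. The corresponding $u_\varepsilon=S+W_{\lambda_\varepsilon}+\phi_{\lambda_\varepsilon}+c$ then blows up exactly at $q$.
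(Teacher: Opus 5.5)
The paper gives no proof of this statement; it quotes \cite[Theorem 1.5]{EF}. There the solutions come from a Lyapunov--Schmidt reduction in which the concentration point $\xi$ is a free parameter. The translation components of the reduced system are solved near $q$ because $q$ is a nondegenerate critical point of $G_p$, which is the role of $\det D^2G_p(q)\neq0$. The dilation component ties the concentration scale to $\rho-4\pi$, with the sign set by $D(q)$.

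Your route is genuinely different and sound in outline. Since $2q\in\Lambda_\tau$, the symmetry $z\mapsto-z$ fixes $q$, preserves $\{\pm p\}$ and removes the translation modes. Only the dilation mode survives, so the reduced problem is a single scalar equation in $\lambda$. This buys three things: a shorter argument, even solutions directly (the paper only gets evenness afterwards, from the uniqueness in Theorem~\ref{thm-D}), and no need for nondegeneracy of $q$ in the existence part. In exchange, the EF approach needs no symmetry and handles general surfaces and several bubbles. Your remark that $\det D^2G_p(q)\neq0$ rules out degeneracy ``in the outer region'' is not correct. In the even class the outer limit problem is $\Delta\phi=0$ on $E_\tau$, which never sees $D^2G_p$. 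By Lemma~\ref{lemma1-11} this hypothesis is equivalent to $D(q)\neq0$ anyway.

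Two steps need fixing.

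\textbf{The ansatz.} The correction terms in $W_\lambda$ are wrong. With $U_\lambda=\log\frac{8e^\lambda}{(1+e^\lambda|z-q|^2)^2}$ you get $S+W_\lambda=U_\lambda-8\pi(\tilde G(z,q)-\tilde G(q,q))+\mathrm{const}$, which is not a pure bubble near $q$. Also, on the gluing annulus $W_\lambda$ differs from $8\pi G(z-q)$ by $2\rho(G_p(z)-G_p(q))-16\pi(\tilde G(z,q)-\tilde G(q,q))+\mathrm{const}=O(|z-q|^2)$. That is far larger than the claimed $\lambda e^{-\lambda}$ residual. Take instead $v=PU_\lambda$, the mean-zero solution of $-\Delta PU_\lambda=e^{U_\lambda}-\frac{1}{|E_\tau|}\int_{E_\tau}e^{U_\lambda}$. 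Up to constants and lower-order terms, it equals $U_\lambda+8\pi\tilde G(z,q)$ near $q$ and $8\pi G(z-q)$ away from $q$, so no gluing is needed. Then $e^u$ is a constant times $k\,e^{U_\lambda}$ with $k=e^{8\pi\tilde G(\cdot,q)-2\rho G_p}$, which is exactly the weight in \eqref{equ1-5}.

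\textbf{The $\lambda e^{-\lambda}$ terms.} They do not cancel when $\rho\neq4\pi$, so trying to prove a cancellation would fail. Note that $\nabla G_p(q)=0$ only kills first-order terms, which are already absent by symmetry. These terms come from the radial part of $k-k(q)$ and carry the factor $\Delta\log k(q)=(8\pi-2\rho)/|E_\tau|$. Hence they are $O(|\rho-4\pi|\lambda e^{-\lambda})$ and are absorbed into the $\rho-4\pi$ term.

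Moreover, $\rho-4\pi$ does not enter through the mass defect near $q$, which is invisible because $\int_{\R^2}e^{U_\lambda}Z_0=0$. It enters through the constant term $\approx(8\pi-2\rho)/|E_\tau|$ of the residual, paired with $Z_0\approx-1$ on the whole torus. With $Z_0=\frac{1-e^\lambda|z-q|^2}{1+e^\lambda|z-q|^2}$ extended by $-1$, the projection of the residual is $2(\rho-4\pi)-8e^{8\pi G_p(q)}D(q)e^{-\lambda}+o(e^{-\lambda})+o(|\rho-4\pi|)$. The part $-\int_{\R^2\setminus E_\tau}|z-q|^{-4}$ of $D(q)$ comes from the bubble tail outside a fundamental domain. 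This matches the sign in Theorem~\ref{thm-B}.

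The intermediate value theorem on a window around $\lambda=\log(1/|\rho-4\pi|)$ then finishes the proof. Continuity in $\lambda$ suffices; no monotonicity is needed.
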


\begin{theorem}\cite{BKLY, BYZ}\label{thm-D}
Suppose $q
=\frac{\omega_k}{2}$ is a trivial critical point of $G_p(z)$ such that both
$D(q)$ and $\det D^{2}G_{p}(q)$ do not vanish.
Suppose $u_{k}(z)$, $\tilde{u}_{k}(z)$ are two sequences of
solutions of \eqref{equ1-3} with the same parameter $\rho
_{k}\rightarrow4\pi$, bot of which  blow up
at $q$. Then $u_{k}(z)=\tilde
{u}_{k}(z)$ and $u_k(z)$ is non-degenerate for large $k$.
\end{theorem}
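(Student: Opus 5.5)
The plan is to follow the local-uniqueness scheme of Bartolucci--Jevnikar--Lee--Yang type and argue by contradiction with a normalized difference. Suppose $u_k\not\equiv\tilde u_k$ for infinitely many $k$, and set
\[
\xi_k:=\frac{u_k-\tilde u_k}{\|u_k-\tilde u_k\|_{L^\infty(E_\tau)}},
\qquad
\Delta\xi_k+c_k\xi_k=0,
\qquad
c_k:=\frac{e^{u_k}-e^{\tilde u_k}}{u_k-\tilde u_k}.
\]
Both solutions carry the same singular data, so the singular parts cancel. In particular $\xi_k$ is bounded, and integrating gives $\int_{E_\tau}c_k\xi_k=0$.

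\textbf{Step 1: sharp approximation of both solutions.} Near $q$ I would write each solution as a standard bubble
\[
U_{\lambda,a}(z)=\log\frac{8e^{\lambda}}{(1+e^{\lambda}|z-a|^2)^2}
\]
plus the regular part $8\pi(\tilde G(z,a)-\tilde G(a,a))-8\pi G_p(z)$ and an error term. The error should be controlled at order $O(\lambda e^{-\lambda})$ in a weighted norm; this is the refined estimate of Chen--Lin and Bartolucci et al. The local maxima $a_k,\tilde a_k$ satisfy
\[
\nabla G_p(a_k)=O(\lambda_k e^{-\lambda_k}).
\]
Since $\det D^2G_p(q)\ne0$, this gives $|a_k-q|=O(\lambda_ke^{-\lambda_k})$, and the same for $\tilde a_k$. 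Theorem~\ref{thm-B} holds for both sequences with the same $\rho_k$. Together with $D(q)\neq0$ this forces $e^{-\lambda_k}/e^{-\tilde\lambda_k}\to1$, and more precisely $|\lambda_k-\tilde\lambda_k|=o(1)$ with a quantitative rate. Hence $c_k$ is comparable to $e^{u_k}$ up to small errors.

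\textbf{Step 2: blow-up of $\xi_k$.} Rescale $\hat\xi_k(y)=\xi_k(a_k+e^{-\lambda_k/2}y)$. Then $\hat\xi_k$ converges in $C^1_{\rm loc}(\mathbb R^2)$ to a bounded solution of the linearized Liouville equation $\Delta\xi+\frac{8}{(1+|y|^2)^2}\xi=0$. Such solutions are
\[
b_0\frac{1-|y|^2}{1+|y|^2}+b_1\frac{y_1}{1+|y|^2}+b_2\frac{y_2}{1+|y|^2}.
\]
Away from $q$, $\xi_k$ converges to a constant $b^*$. The Green representation combined with $\int c_k\xi_k=0$ links $b^*$ to $-b_0$.

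\textbf{Step 3: killing the coefficients.} For $b_1=b_2=0$, I would use the Pohozaev identity for the difference on a small ball $B_r(a_k)$. The leading term is $D^2G_p(q)\,(b_1,b_2)^T$ times a nonzero constant, so nondegeneracy forces $b_1=b_2=0$. For $b_0=0$, the plan is to differentiate the expansion \eqref{bubb} in $\lambda$, i.e.\ apply a second Pohozaev/radial identity to $\xi_k$. Because $\rho_k$ is identical for both sequences, the leading term is proportional to $b_0D(q)e^{-\lambda_k}$ and must vanish, and $D(q)\ne0$ gives $b_0=0$. Consequently $b^*=0$ as well.

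\textbf{Step 4: conclusion.} With all coefficients zero, $\xi_k\to0$ both in the bubble region and away from $q$. The neck region is handled by a maximum-principle/Green-representation argument using the decay of $c_k$. This contradicts $\|\xi_k\|_\infty=1$. Nondegeneracy of $u_k$ follows verbatim, taking $\xi_k$ a normalized kernel element with $c_k=e^{u_k}$.

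\textbf{Main obstacle.} I expect the hardest step to be the sharp error estimates of Step 1 at order $\lambda e^{-\lambda}$. These are what make the $b_0$ identity in Step 3 nondegenerate: the correction terms must be $o(e^{-\lambda_k})$ relative to $D(q)e^{-\lambda_k}$. The positive singularities at $\pm p$ lie away from $q$ and only enter through the smooth weight $e^{-8\pi G_p}$ near $q$.
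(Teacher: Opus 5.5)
Your outline is the standard Bartolucci--Jevnikar--Lee--Yang local-uniqueness argument: a normalized difference, a limit in the kernel of the linearized Liouville operator, Pohozaev identities that kill $b_1,b_2$ through $\det D^2G_p(q)\neq0$ and $b_0$ through $D(q)\neq0$ together with the common $\rho_k$, and then a neck estimate. This is exactly the route of the works the paper relies on, since the paper gives no proof of its own and simply quotes uniqueness from \cite{BKLY} and nondegeneracy from \cite{BYZ}. In this torus setting the two hypotheses are in fact equivalent by Lemma~\ref{lemma1-11}. Moreover, as Theorem~\ref{thm-B} reflects, there is no $\lambda_ke^{-\lambda_k}$ term in $\rho_k-4\pi$, so the $D(q)e^{-\lambda_k}$-order refinement you single out as the main obstacle is indeed the essential input.
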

In Theorem \ref{thm-D}, the uniqueness $u_{k}(z)=\tilde
{u}_{k}(z)$ was proved in \cite{BKLY}, and the nondegeneracy was proved in \cite{BYZ}.

In view of Theorems \ref{thm-C}-\ref{thm-D}, it is important to see when $$D(q)\neq 0\quad\text{and}\quad\det D^2G_p(q)\neq 0$$ for a trivial critical point $q=\frac{\omega_k}{2}$. The following result shows that these two quantities can be related to each other.

\begin{lemma}\label{lemma1-11}
Let  $q=\frac{\omega_k}{2}$ be a trivial critical point of $G_p(z)$. Write $b=\operatorname{Im} \tau>0$. Then
\begin{equation}\label{equ1-10}
D(q)=-4\pi^2be^{-8\pi G_p(q)} \det D^2G_p(q).
\end{equation}
\end{lemma}

\begin{proof}
By \eqref{equ1-4}-\eqref{equ1-5}, we have
\begin{align}\label{equ1-6}
D(q)=e^{-8\pi G_p(q)}\lim_{r\to0}\bigg(\int_{E_{\tau}\setminus B_r(q)}e^{8\pi (G_p(q)-\tilde{G}(q,q))}e^{8\pi(G(z,q)-G_p(z))}\nonumber\\
-\int_{\mathbb R^2\setminus B_r(q)}\frac{1}{|z-q|^4}
\bigg).\end{align}
Note that $8\pi(G(z,q)-G_p(z))$ is a doubly periodic harmonic function in $\mathbb R^2$ with singularities $-4\log|z-q|$ at $z=q$ and $2\log |z\pm p|$ at $z=\pm p$. Thanks to $q=\frac{\omega_k}{2}$, we obtain
$$8\pi(G(z,q)-G_p(z))=2\log|\wp(z-q)-\wp(p-q)|+C,$$ 
where the constant $C=8\pi (\tilde{G}(q,q)-G_p(q))$. Remark that this formula does not hold if $q\notin E_{\tau}[2]$. Inserting this formula into \eqref{equ1-6}, we obtain
\begin{align}\label{equ1-7}
D(q)=&e^{-8\pi G_p(q)}\lim_{r\to0}\bigg(\int_{E_{\tau}\setminus B_r(q)}|\wp(z-q)-\wp(p-q)|^2-\int_{\mathbb R^2\setminus B_r(q)}\frac{1}{|z-q|^4}
\bigg)\nonumber\\
=&e^{-8\pi G_p(q)}\lim_{r\to0}\bigg(\int_{E_{\tau}\setminus B_r(0)}|\wp(z)-\wp(p-q)|^2-\int_{\mathbb R^2\setminus B_r(0)}\frac{1}{|z|^4}
\bigg)\nonumber\\
=&e^{-8\pi G_p(q)}\lim_{r\to0}\bigg(\int_{E_{\tau}\setminus B_r(0)}|\wp(z)|^2-\int_{\mathbb R^2\setminus B_r(0)}\frac{1}{|z|^4}
\bigg)\\
&-e^{-8\pi G_p(q)}\bigg(\lim_{r\to0}\int_{E_{\tau}\setminus B_r(0)}(\overline{\wp(p-q)}\wp(z)+\wp(p-q)\overline{\wp(z)})-b|\wp(p-q)|^2\bigg).\nonumber\end{align}
It was computed in \cite[Theorem 3.2]{LW4} that
$$\lim_{r\to0}\bigg(\int_{E_{\tau}\setminus B_r(0)}|\wp(z)|^2-\int_{\mathbb R^2\setminus B_r(0)}\frac{1}{|z|^4}
\bigg)=b|\eta_1|^2-\pi (\eta_1+\bar\eta_1),$$
$$\lim_{r\to0}\int_{E_{\tau}\setminus B_r(0)}\wp(z)=\pi-\eta_1b,$$
$$\lim_{r\to0}\int_{E_{\tau}\setminus B_r(0)}\overline{\wp(z)}=\pi-\bar\eta_1b.$$
Here, $\eta_1$ is the first quasi-period of the Weierstrass zeta function.
Inserting these formulas into \eqref{equ1-7}, we obtain
\begin{align*}
D(q)=&e^{-8\pi G_p(q)}\bigg(b|\eta_1|^2-\pi (\eta_1+\bar\eta_1)+\overline{\wp(p-q)}(\eta_1b-\pi)\\
&+\wp(p-q)(\bar\eta_1b-\pi)+b|\wp(p-q)|^2\bigg)\\
=&e^{-8\pi G_p(q)}\bigg(b|\wp(p-q)+\eta_1|^2-\pi(\wp(p-q)+\eta_1+\overline{\wp(p-q)+\eta_1})\bigg)\\
=&be^{-8\pi G_p(q)}\bigg(\bigg|\wp(p-q)+\eta_1-\frac{\pi}{b}\bigg|^2-\frac{\pi^2}{b^2}\bigg).
\end{align*}
Since it was proved in \cite[(3.2)]{CFL} that
$$\det D^2G_p(q)=\frac{-1}{4\pi^2}\bigg(\bigg|\wp(p-q)+\eta_1-\frac{\pi}{b}\bigg|^2-\frac{\pi^2}{b^2}\bigg),$$
we obtain \eqref{equ1-10}.
\end{proof}

Remark that when $\rho\notin 4\pi\mathbb{N}$, the Leray-Schauder degree $d_\rho$ of \eqref{equ1-3} is well-defined. Furthermore, it follows from \cite[Theorem 1.1]{CL-3} that
\begin{equation}
d_\rho=\begin{cases} 1\quad\text{if }\rho\in (0, 4\pi)\\
3\quad\text{if }\rho\in (4\pi, 8\pi).
\end{cases}
\end{equation}
In particular, \eqref{equ1-3} with $\rho\in (0,4\pi)\cup(4\pi,8\pi)$ always has solutions.

\begin{corollary}\label{coro1-3}
Suppose $q=\frac{\omega_k}{2}$ is a non-degenerate trivial critical point of $G_p(z)$.
\begin{itemize}
\item[(1)]If $\det D^2G_p(q)>0$ or equivalently $D(q)<0$, then there exists $\varepsilon>0$ such that for any $\rho\in (4\pi-\varepsilon, 4\pi)$, equation \eqref{equ1-3} has exactly one solution $u_\varepsilon$ such that $u_\varepsilon$ blows up at $q$ as $\varepsilon\to 0$. Furthermore, $u_\varepsilon(z)=u_\varepsilon(-z)$, $u_\varepsilon$ is non-degenerate and hence contributes degree $\pm 1$. Besides, \eqref{equ1-3} with $\rho>4\pi$ has no solutions which blows up at $q$ as $\rho\downarrow 4\pi$.
\item[(2)]If $\det D^2G_p(q)<0$ or equivalently $D(q)>0$, then there exists $\varepsilon>0$ such that for any $\rho\in (4\pi, 4\pi+\varepsilon)$, equation \eqref{equ1-3} has exactly one solution $u_\varepsilon$ such that $u_\varepsilon$ blows up at $q$ as $\varepsilon\to 0$. Furthermore, $u_\varepsilon(z)=u_\varepsilon(-z)$, $u_\varepsilon$ is non-degenerate and hence contrubutes degree $\pm 1$. Besides, \eqref{equ1-3} with $\rho<4\pi$ has no solutions which blows up at $q$ as $\rho\uparrow 4\pi$.
\end{itemize}
\end{corollary}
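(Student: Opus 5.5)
The plan is to combine the existence result of Theorem~\ref{thm-C}, the local uniqueness and nondegeneracy of Theorem~\ref{thm-D}, and the sign relation of Lemma~\ref{lemma1-11}. First I fix the signs. Since $q=\frac{\omega_k}{2}$ is nondegenerate, $\det D^2G_p(q)\neq0$. By \eqref{equ1-10}, with $b=\operatorname{Im}\tau>0$ and $e^{-8\pi G_p(q)}>0$, the quantity $D(q)$ is nonzero and its sign is opposite to that of $\det D^2G_p(q)$. This gives the stated equivalences, and both hypotheses of Theorems~\ref{thm-C} and~\ref{thm-D} hold. I treat case (1), $D(q)<0$; case (2) is symmetric with the inequalities reversed.

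\textbf{Existence.} Theorem~\ref{thm-C} gives $\varepsilon_0>0$ such that for each $\rho=4\pi-\varepsilon$ with $0<\varepsilon<\varepsilon_0$ there is a solution $u_\varepsilon$ blowing up exactly at $q$.

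\textbf{Uniqueness among solutions blowing up at $q$.} I argue by contradiction. Suppose that for every $\varepsilon>0$ some $\rho_k\in(4\pi-\varepsilon,4\pi)$ admits two distinct solutions $u_k\neq\tilde u_k$, both in the class of solutions blowing up at $q$. To make this class precise, I define it as the solutions with $\max u$ larger than a threshold $L$ and concentrating near $q$, meaning their maxima lie in a small disc $B_r(q)$ containing no other critical point of $G_p$. Along such a sequence both $u_k$ and $\tilde u_k$ blow up, and by \cite{BT,CL-1} the blowup point is a critical point of $G_p$ in $B_r(q)$. Hence it is $q$, and Theorem~\ref{thm-D} forces $u_k=\tilde u_k$ for large $k$, a contradiction. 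The same theorem gives nondegeneracy for $\rho$ close to $4\pi$. The local Leray--Schauder index of a nondegenerate solution is $\pm1$.

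\textbf{Evenness.} Since $G_p$ and the equation are invariant under $z\mapsto -z$, and $-q=q$ on $E_\tau$, the function $u_\varepsilon(-z)$ is again a solution blowing up at $q$ with the same $\rho$. Local uniqueness then gives $u_\varepsilon(-z)=u_\varepsilon(z)$.

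\textbf{Nonexistence on the other side.} If $\rho_k\downarrow4\pi$ and $u_k$ blows up at $q$, then \eqref{bubb} gives
\[
\rho_k-4\pi=c_q(D(q)+o(1))e^{-\lambda_k}<0
\]
for large $k$, which is impossible. So no such family exists for $\rho>4\pi$.

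The main obstacle is matching the Theorem~\ref{thm-D} statement, which concerns sequences, with the "exactly one for every $\rho$ in an interval" claim. This requires the contradiction argument above together with a careful definition of "blows up at $q$" via the concentration neighborhood. Once that is done, the remaining steps are direct citations.
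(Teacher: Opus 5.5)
Your proposal is correct and is essentially the paper's proof. The paper likewise just combines Theorems~\ref{thm-B}--\ref{thm-D} with Lemma~\ref{lemma1-11}, and it gets evenness from local uniqueness together with $-q=q$.

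The one step to tighten is your threshold class. For a fixed $L$, a sequence with $\max u_k>L$ need not blow up, since it could converge to a solution of \eqref{mean}. You have two fixes. One is to take $L$ large enough: this is possible because \eqref{bubb} with $\rho_k\equiv4\pi$ and $D(q)\neq0$ excludes solutions of \eqref{mean} with arbitrarily large maxima concentrating near $q$. The other is to read \emph{blows up at $q$} in the sequential sense of Theorem~\ref{thm-D}, where the contradiction argument is immediate.
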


\begin{proof}
This is a direct consequence of Theorems \ref{thm-B}-\ref{thm-D} and Lemma \ref{lemma1-11}. Note that $u_\varepsilon(z)=u_\varepsilon(-z)$ follows from Theorem \ref{thm-D} and the fact that $u_{\varepsilon}(-z)$ is also a solution of \eqref{equ1-3} and blows up at $q$ because $-q=q$.
\end{proof}

Define
\begin{align}\label{mnumber}
m^-:=  \# \Big\{q=\frac{\omega_k}{2}\,|\, D({q}%
)<0\Big\} 
=  \# \Big\{ q=\frac{\omega_k}{2}\,|\, \det
D^{2}G_{p}(q)>0\Big\},\\
m^+:=  \# \Big\{q=\frac{\omega_k}{2}\,|\, D({q}%
)>0\Big\} 
=  \# \Big\{ q=\frac{\omega_k}{2}\,|\, \det
D^{2}G_{p}(q)<0\Big\}.
\end{align}

\begin{theorem}Fix $\tau$ and $p\in E_{\tau}\setminus E_{\tau}[2]$ such that $G_p(z)$ has no nontrivial critical points and the $4$ trivial critical points are all non-degenerate (It follows from \cite{CFL} that such $(\tau, p)$ exists). Then there exists small $\varepsilon>0$ such that
\begin{itemize}
\item[(1)] For $\rho\in (4\pi-\varepsilon, 4\pi)$, \eqref{equ1-3} has a unique solution that is even.

\item[(2)]  For $\rho\in (4\pi, 4\pi+\varepsilon)$, \eqref{equ1-3} has exactly three solutions that are all even.
\end{itemize}

\end{theorem}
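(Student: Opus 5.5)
The plan is to combine the degree count $d_\rho$ with the blow-up classification near $\rho=4\pi$, using Corollary~\ref{coro1-3} to count the solutions that blow up.

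\textbf{Step 1: local compactness.} I first show that solutions with $\rho$ near $4\pi$ either stay bounded or blow up at a trivial critical point. Suppose $u_k$ solves \eqref{equ1-3} with $\rho_k\to4\pi$. If $\max u_k\to+\infty$, then by \cite{BT,CL-1} the blow-up point $q$ is unique and is a critical point of $G_p$. By hypothesis, $q=\frac{\omega_k}{2}$ for some $k$. If instead $\max u_k$ stays bounded, then $u_k$ converges to a solution of \eqref{mean}. By Theorem~\ref{thm-CFL2}(2), solutions of \eqref{mean} correspond to pairs of nontrivial critical points, and there are none. Hence \eqref{mean} has no solution, so the bounded alternative cannot occur. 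Consequently, for some $\varepsilon>0$, every solution with $0<|\rho-4\pi|<\varepsilon$ blows up as $\rho\to4\pi$ at one of the four half periods.

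\textbf{Step 2: signs of $D$.} By Lemma~\ref{lemma1-11}, nondegeneracy of the four half periods gives $D(q)\ne0$ at each of them. Corollary~\ref{coro1-3} then applies to every $q$. For $\rho\in(4\pi-\varepsilon,4\pi)$, the solutions are exactly one per $q$ with $D(q)<0$, so their number is $m^-$. For $\rho\in(4\pi,4\pi+\varepsilon)$, their number is $m^+$. All of these solutions are even and nondegenerate.

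\textbf{Step 3: degree count.} Each such solution contributes local degree $\pm1$. Since $d_\rho=1$ below $4\pi$, we have $m^-\ge1$; since $d_\rho=3$ above $4\pi$, we have $m^+\ge3$. The four half periods split as $m^-+m^+=4$, which forces $m^-=1$ and $m^+=3$. This yields (1) and (2).

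\textbf{Main obstacle.} The main obstacle is Step 1: making sure no solution escapes the classification. Two points need care. First, I must exclude bounded solutions, i.e.\ confirm that a bounded limit really produces a solution of \eqref{mean} and that Theorem~\ref{thm-CFL2}(2) excludes it. Second, I must rule out blow-up at $\pm p$. The singular mass there is $\rho\approx4\pi$, so blow-up at $\pm p$ would carry local mass $8\pi+2\cdot4\pi>8\pi$, exceeding the total mass $2\rho\approx8\pi$. This excludes it, consistent with the statement from \cite{BT,CL-1} that the blow-up point lies in $E_\tau\setminus\{\pm p\}$. A contradiction argument along a sequence $\rho_k\to4\pi$ then gives a uniform $\varepsilon$.
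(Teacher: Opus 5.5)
Your proof is correct. Steps 1 and 2 follow the paper's own reasoning, which the paper states more tersely: \eqref{mean} has no solution by Theorem~\ref{thm-CFL2}, so every solution near $\rho=4\pi$ blows up at a half period, and Corollary~\ref{coro1-3} then counts the solutions on each side.

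The genuine difference is Step 3. The paper takes $m^-=1$ and $m^+=3$ directly from \cite{CFL}. You instead derive these values from the degree values $d_\rho=1$ on $(0,4\pi)$ and $d_\rho=3$ on $(4\pi,8\pi)$ given by \cite{CL-3}. Below $4\pi$ the solutions are exactly $m^-$ nondegenerate ones, and above $4\pi$ exactly $m^+$, each contributing $\pm1$; hence $m^-\ge1$ and $m^+\ge3$. Since $m^-+m^+=4$ by Lemma~\ref{lemma1-11}, both values are forced.

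What your route buys is self-containedness. You need nothing about the Hessian signs of $G_p$ beyond the stated hypotheses, and you recover the sign pattern at the half periods as a by-product. This is the same degree-counting mechanism the paper itself uses in the two subsequent multiplicity theorems. The paper's route is shorter but relies on an additional external classification result.

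As a side remark, the same count also follows from Poincar\'e--Hopf applied to $\nabla G_p$ on $E_\tau$. Each pole $\pm p$ carries index $+1$, which forces $m^--m^+=-2$.
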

\begin{proof}
Since $G_p(z)$ has no nontrivial critical points, it follows from Theorem \ref{thm-CFL2} that \eqref{equ1-3} with $\rho=4\pi$ has no solutions. Therefore, if $u_\rho$ is any solution of \eqref{equ1-3} with $\rho\neq 4\pi$, then $u_\rho$ must blow up and then blows up at some trivial critical point of $G_p(z)$ as $\rho\to 4\pi$. Furthermore, since  $G_p(z)$ has no nontrivial critical points and the $4$ trivial critical points are all non-degenerate, it follows from \cite{CFL} that $m^-=1$ and $m^+=3$. Therefore, the assertions (1)-(2) follow directly from Corollary \ref{coro1-3}.
\end{proof}

\begin{theorem}Fix $\tau$ and $p\in E_{\tau}\setminus E_{\tau}[2]$ such that $G_p(z)$ has nontrivial critical points, the $4$ trivial critical points are all non-degenerate and $m^-=m^+=2$ (It follows from \cite{CFL} that such $(\tau, p)$ exists). Then there exists small $\varepsilon>0$ such that
\begin{itemize}
\item[(1)] For $\rho\in (4\pi-\varepsilon, 4\pi)$, \eqref{equ1-3} has at least $3$ even solutions.

\item[(2)]  For $\rho\in (4\pi, 4\pi+\varepsilon)$, \eqref{equ1-3} has at least $3$ even solutions.
\end{itemize}
\end{theorem}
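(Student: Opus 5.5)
The argument is the same bubbling-and-counting scheme as in the previous theorem. The new ingredient is that solutions may now also accumulate at $\rho=4\pi$ without blowing up, through the scaling families attached to the nontrivial critical points. The aim is to exhibit, on each side of $4\pi$, the two solutions blowing up at the trivial critical points of the appropriate sign, and to produce a third even solution.

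\textbf{Step 1: solutions blowing up at the trivial critical points.} Since the four trivial critical points are nondegenerate, Lemma \ref{lemma1-11} gives $D(\omega_k/2)\neq0$ for each $k$. By the hypothesis $m^-=m^+=2$, Corollary \ref{coro1-3} yields the following, for some small $\varepsilon$:
\begin{itemize}
\item for $\rho\in(4\pi-\varepsilon,4\pi)$, there are exactly two solutions blowing up at the two points with $D<0$;
\item for $\rho\in(4\pi,4\pi+\varepsilon)$, there are exactly two solutions blowing up at the two points with $D>0$.
\end{itemize}
Each of these solutions is even, is nondegenerate, and has local degree $\pm1$. They are pairwise distinct for $\rho$ close to $4\pi$, since they blow up at different points.

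\textbf{Step 2: a third even solution by an odd-degree count.} I would work in the space of even functions. The map $z\mapsto -z$ preserves \eqref{equ1-3} because the singular set $\{\pm p\}$ is symmetric, so the Leray--Schauder fixed-point problem can be restricted to even functions. I would then argue that the even degree $d^{e}_\rho$ is odd, for instance $d^{e}_\rho=1$ for $\rho<4\pi$. The justification is a homotopy from small $\rho$: by Section \ref{Sec 2} there is a unique solution there, which is nondegenerate and even. This homotopy stays in $(0,4\pi)$, where compactness holds by \cite{BT}, since blow-up at $\rho<4\pi$ is impossible for total mass $2\rho<8\pi$. The two solutions from Step 1 contribute $\pm1+\pm1$, which is even. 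A total even degree of $1$ therefore forces at least one further even solution on the left side. On the right side I would use the analogous count, with even degree $3$ or at least odd, obtained via \cite{CL-3} together with the same symmetry restriction, and conclude in the same way.

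\textbf{Step 3: the degree jump, which is the main obstacle.} It has to be checked that the even degree for $\rho\in(4\pi,8\pi)$ is odd. I would compute it by noting that the jump $d^e_{4\pi^+}-d^e_{4\pi^-}$ comes only from blow-up at trivial critical points, which are the only possible even blow-up points. The jump is then given by the signed counts in Corollary \ref{coro1-3}, namely $m^+$ contributions minus $m^-$ contributions, each of size $\pm1$. That signed count has the parity of $m^++m^-=4$, which is even. Hence the even degree stays odd and the third solution exists on both sides.

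A more hands-on alternative is also available. The scaling families at $\rho=4\pi$ from Theorem \ref{thm-CFL2} are even, since they correspond to pairs $\pm a$ of nontrivial critical points. One could try to perturb such a family, using its nondegeneracy modulo scaling, into solutions for nearby $\rho$ on both sides. I would fall back on this route if the parity argument of Step 3 runs into trouble, because the degree contribution of the families at $4\pi$ is the delicate point.
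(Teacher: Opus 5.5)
Your argument is correct, but it reaches the third even solution by a different mechanism than the paper.

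\textbf{The paper's route.} The paper uses the full Leray--Schauder degree $d_\rho$ of \eqref{equ1-3}, taking $d_\rho=1$ on $(0,4\pi)$ and $d_\rho=3$ on $(4\pi,8\pi)$ from \cite{CL-3}. After removing the two nondegenerate even bubbling solutions, whose total degree is $0$ or $\pm2$, it invokes the $\mathbb Z_2$-parity result \cite[Corollary 2.1]{Wang}. If all remaining solutions were non-even, their total degree would be even, which contradicts the oddness of $d_\rho$.

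\textbf{Your route.} You restrict the fixed-point problem to the invariant subspace of even functions and compute that degree $d^e_\rho$ directly.
\begin{itemize}
\item On $(0,4\pi)$ you get $d^e_\rho=1$ by homotopy to the unique small-$\rho$ solution of Section~\ref{Sec 2}, which is nondegenerate and even.
\item On $(4\pi,4\pi+\varepsilon)$ you show $d^e_\rho$ is odd by the jump count.
\item Nondegeneracy in the full space passes to the even subspace, so each bubbling solution has even index $\pm1$.
\end{itemize}

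\textbf{The jump step.} This step is sound, and the point you flagged as delicate is harmless in the even subspace. An even blowing-up sequence has a single blow-up point $q$ with $q=-q$. So $q\in E_\tau[2]$, and there $D(q)\neq0$. Then \eqref{bubb} forbids blow-up along $\rho\equiv4\pi$, so the even solutions at $\rho=4\pi$ form a compact set. You can therefore fix a ball in the even subspace whose boundary carries no even solution for $|\rho-4\pi|$ small. The jump across $4\pi$ is then a sum of two $\pm1$'s minus a sum of two $\pm1$'s, which is even.

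\textbf{Comparison.} Your route does not depend on the equivariant degree lemma or on the exact value $d_\rho=3$; only parity is used, and you derive it intrinsically. The non-compact scaling families at $\rho=4\pi$ never enter. The paper's route is shorter, given the cited degree formula and Wang's corollary.

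\textbf{Drop the fallback paragraph.} Its premise is false: the members of a scaling family are not even. The family $u_\beta$ blows up at a nontrivial critical point $q$ as $\beta\to\infty$. An even sequence cannot blow up at a point with $q\neq-q$, because only one blow-up point is possible at total mass $8\pi$.
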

\begin{proof} Let $k_1, k_2\in \{0,1,2,3\}$ such that $D(\frac{\omega_{k_1}}{2})<0$ and $D(\frac{\omega_{k_2}}{2})<0$. Then by Corollary \ref{coro1-3}, there is small $\varepsilon>0$ such that for $\rho\in (4\pi-\varepsilon, 4\pi)$, \eqref{equ1-3} has two non-degenerate even solutions $u_{\rho,1}$ and $u_{\rho,2}$ such that $u_{\rho,i}$ blows up at $\frac{\omega_{k_i}}{2}$ for $i=1,2$. Since the total degree of these two solutions can only be $-2, 0$ or $2$, it follows from the Leray-Schauder degree $d_\rho=1$ that for $\rho\in (4\pi-\varepsilon, 4\pi)$, \eqref{equ1-3} must have other solutions. If all other solutions are not even, since $u(-z)$ is also a solution if $u(z)$ is, it follows from \cite[Corollary 2.1]{Wang} for $\mathbb Z_2$-symmetry that the total degree of all other solutions must be even, a contradiction with $d_\rho=1$. Therefore, we conclude that \eqref{equ1-3} must have the third even solution. This proves the assertion (1). The assertion (2) can be proved similarly.
\end{proof}

\begin{theorem}Fix $\tau$ and $p\in E_{\tau}\setminus E_{\tau}[2]$ such that $G_p(z)$ has nontrivial critical points, the $4$ trivial critical points are all non-degenerate and $m^-=0$, $m^+=4$ (It follows from \cite{CFL} that such $(\tau, p)$ exists). Then there exists small $\varepsilon>0$ such that
\begin{itemize}
\item[(1)] For $\rho\in (4\pi-\varepsilon, 4\pi)$, \eqref{equ1-3} has at least one even solution.

\item[(2)]  For $\rho\in (4\pi, 4\pi+\varepsilon)$, \eqref{equ1-3} has at least $5$ even solutions.
\end{itemize}
\end{theorem}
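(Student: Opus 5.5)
We argue exactly as in the two preceding theorems, combining Corollary \ref{coro1-3} with the Leray--Schauder degree $d_\rho$ and the $\mathbb Z_2$-symmetry $u(z)\mapsto u(-z)$. Since every trivial critical point is non-degenerate and $m^-=0$, $m^+=4$, all four half periods satisfy $D(\frac{\omega_k}{2})>0$. Equivalently, $\det D^2G_p(\frac{\omega_k}{2})<0$ by Lemma \ref{lemma1-11}.

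\textbf{Case $\rho\in(4\pi-\varepsilon,4\pi)$.} Here we only need existence of one even solution. Solutions exist because $d_\rho=1\neq0$. Suppose no solution is even. Then the solution set is invariant under the free involution $u\mapsto u(-\cdot)$. By \cite[Corollary 2.1]{Wang}, the total degree is then even, contradicting $d_\rho=1$. Hence at least one even solution exists. We will not need to locate it; by Corollary \ref{coro1-3}(2) it cannot blow up at a half period as $\rho\uparrow4\pi$, but this is irrelevant here. One should also check that the set of solutions is compact for fixed $\rho<4\pi$, so that the degree is computed on a bounded set. This holds because $\rho\notin4\pi\mathbb N$; see \cite{BT,CL-1}.

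\textbf{Case $\rho\in(4\pi,4\pi+\varepsilon)$.} Corollary \ref{coro1-3}(2) applies to each of the four half periods. For small $\varepsilon$, it gives four non-degenerate even solutions $u_{\rho,k}$, $k=0,\dots,3$, with $u_{\rho,k}$ blowing up at $\frac{\omega_k}{2}$. These are distinct for $\rho$ close to $4\pi$ because their blow-up points differ. Each contributes degree $\pm1$, so their total contribution lies in $\{-4,-2,0,2,4\}$, which is even. Since $d_\rho=3$ is odd, there must be further solutions with odd total degree.

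The remaining solutions form a set invariant under $u\mapsto u(-\cdot)$. The non-even ones come in pairs and, by \cite[Corollary 2.1]{Wang}, contribute an even total degree. Hence at least one additional even solution $u_{\rho,5}$ exists, giving at least five even solutions.

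The main obstacle is making the degree bookkeeping legitimate, which requires the four bubbling solutions to be isolated from the rest. One must show that the remaining solutions form a compact set disjoint from small neighborhoods of the $u_{\rho,k}$. This follows from the local uniqueness in Theorem \ref{thm-D}: any solution close to $u_{\rho,k}$ blows up at $\frac{\omega_k}{2}$ and therefore equals $u_{\rho,k}$. Consequently the complementary degree is well defined and equals $3-\sum_k\deg u_{\rho,k}$, which is odd. The $\mathbb Z_2$-equivariant parity result \cite[Corollary 2.1]{Wang} then applies on the complementary open set, exactly as in the proof of the previous theorem.
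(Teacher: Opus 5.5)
Your proposal is correct and follows the paper's own argument. Part (1) uses $d_\rho=1$ together with the $\mathbb Z_2$-parity result of \cite{Wang}, and part (2) uses the four non-degenerate even bubbling solutions from Corollary \ref{coro1-3}(2), whose total degree is even while $d_\rho=3$ is odd. One refinement to your bookkeeping: at fixed $\rho$, each $u_{\rho,k}$ is isolated in the solution set simply because it is non-degenerate, so you need not invoke Theorem \ref{thm-D}, which concerns sequences with $\rho_k\to4\pi$.
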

\begin{proof} 
The assertion (1) is trivial because $d_\rho=1$. 
To prove the assertion (2), we see from  $m^+=4$ and Corollary \ref{coro1-3} that there is small $\varepsilon>0$ such that for $\rho\in (4\pi, 4\pi+\varepsilon)$, \eqref{equ1-3} has four non-degenerate even solutions $u_{\rho,k}$ such that $u_{\rho,k}$ blows up at $\frac{\omega_{k-1}}{2}$ for $k=1,2,3,4$. Since the total degree of these four solutions can only be an even number, and the Leray-Schauder degree $d_\rho=3$, the same argument as above implies that for $\rho\in (4\pi, 4\pi+\varepsilon)$, \eqref{equ1-3} must have another even solution $u_{\rho,5}$. This proves the assertion (2).
\end{proof}

\subsection*{Acknowledgments}  Z. Chen is supported by National Key R\&D Program of China (No. 2023YFA1010002) and NSFC (No. 12222109). 
S. Zhang is supported by the Postdoctoral Fellowship Program and China Postdoctoral Science Foundation under Grant Numbers BX20250062 and 2026M793381.
The authors acknowledge the use of AI tools. The authors assume full responsibility for the
mathematical validity, accuracy, and integrity of the proofs presented in the manuscript.

\end{document}